\documentclass[11pt]{amsart}

\usepackage{amscd, amssymb}
\usepackage{enumerate}
\usepackage[all]{xypic}
\usepackage{multirow}
\usepackage{hhline}
\usepackage{verbatim}
\usepackage{mathdots}
\usepackage{comment}
\usepackage{tikz-cd}
\usepackage[titletoc]{appendix}
\usepackage{stmaryrd}
\usepackage{graphicx}
\usepackage{mathtools}
\usepackage{leftidx}
\usepackage{mathabx }
\usepackage{amsfonts}
\usepackage{mathrsfs}

\usepackage{dynkin-diagrams}
\usepackage{xcolor}

\usepackage{enumitem}

\usepackage[colorlinks=true, linkcolor=blue,
 citecolor=blue, urlcolor=blue]{hyperref}

\theoremstyle{plain}
\newtheorem{Thm}[equation]{Theorem}
\newtheorem*{Thm*}{Theorem}
\newtheorem{Cor}[equation]{Corollary}
\newtheorem{Prop}[equation]{Proposition}
\newtheorem{Lem}[equation]{Lemma}

\theoremstyle{definition}
\newtheorem*{Rmk}{Remark}
\newtheorem*{Rmks}{Remarks}
\newtheorem{Def}[equation]{Definition}

\numberwithin{equation}{section}

\newcommand{\C}{\mathbb{C}}

\newcommand{\Q}{\mathbb{Q}}

\newcommand{\Z}{\mathbb{Z}}

\newcommand{\pr}{{\operatorname{pr}}}

\newcommand{\Mt}{\widetilde{M}}
\newcommand{\Nt}{\widetilde{N}}

\newcommand{\Gt}{\overline{G}}
\newcommand{\Ht}{\widetilde{H}}

\newcommand{\Tt}{\overline{T}}
\newcommand{\Xt}{X_{Q,n}}

\newcommand{\Yt}{Y_{Q,n}}

\newcommand{\Gbbt}{\overline{{\Gbb}}}
\newcommand{\Tbbt}{\overline{{\Tbb}}}

\newcommand{\Gbb}{{\mathbb{G}}}
\newcommand{\Tbb}{{\mathbb{T}}}
\newcommand{\Bbbb}{{\mathbb{B}}}

\newcommand{\gt}{\tilde{g}}

\newcommand{\xt}{\tilde{x}}
\newcommand{\yt}{\tilde{y}}
\newcommand{\zt}{\tilde{z}}
\newcommand{\wt}{\tilde{w}}
\newcommand{\htt}{\tilde{h}}

\newcommand{\yh}{\hat{y}}

\newcommand{\tbb}{\widebar{t}}

\newcommand{\ybb}{\widebar{y}}
\newcommand{\zbb}{\widebar{z}}

\newcommand{\Ind}{\operatorname{Ind}}

\newcommand{\Irr}{\operatorname{Irr}}

\newcommand{\Span}{\operatorname{span}}
\newcommand{\rank}{\operatorname{rank}}
\newcommand{\End}{\operatorname{End}}
\newcommand{\Hom}{\operatorname{Hom}}

\newcommand{\s}{\mathbf{s}}

\newcommand{\Ocal}{\mathcal{O}}

\newcommand{\Hcal}{\mathcal{H}}

\newcommand{\la}{\langle}
\newcommand{\ra}{\rangle}

\newcommand{\st}{\,:\,}
\newcommand{\qand}{{\quad\text{and}\quad}}

\newcommand{\one}{\mathbf{1}}

\title[Pseudo-spherical representations]{Pseudo-spherical representations and types for covers of split tori}

\author{Edmund Karasiewicz}

\author{Shuichiro Takeda}
\subjclass[2020]{11F70, 22E50}
\keywords{$p$-adic groups; Metaplectic Group.}

\allowdisplaybreaks

\begin{document}

\begin{abstract}

	We introduce a notion of pseudo-spherical representations for any Brylinski-Deligne (BD) cover of a split torus over a $p$-adic field. In particular, we do not assume that the cover is tame. We enumerate the pseudo-spherical representations and compute their dimension. 

    When the torus is the maximal split torus of a connected split reductive group $G$, we enumerate a subset of distinguished pseudo-spherical representations, under mild assumptions on the BD datum. We expect these distinguished representations to provide the torus input for a Borel-Casselman Theorem for BD covers of $G$. We have verified this expectation for certain non-tame double covers in a separate work. 

    While investigating the pseudo-spherical representations, we also prove that the restriction of irreducible genuine representations of covering tori to their maximal compact subgroup is multiplicity-free. From this we see that the irreducible genuine representations of this maximal compact subgroup are types in the sense of Bushnell-Kutzko. We conclude by describing the Hecke algebras attached to these types.
    
\end{abstract}

\maketitle

\setcounter{tocdepth}{1}
\tableofcontents


\section{Introduction}


Earlier work on spherical representations of $n$-fold covers of $p$-adic tori, such as \cite{KP84,S04,W09,Mc12,W16_Tori}, mostly focused on the case of tame covers, meaning $\gcd(n,p)=1$. Here the current theory is well-developed; the most comprehensive treatment is due to Weissman \cite{W09, W16_Tori}. Outside of the tame case, also called wild, much less is known, and what is known is mostly restricted to the case of double covers and is based on somewhat ad hoc arguments. e.g., see \cite{LS10a, LS10b, Wood, Takeda_Wood, Karasiewicz}. In particular, in the literature there is no general notion of spherical representations for covers of tori outside of the tame case. 

In this paper we develop a notion of genuine spherical representation for Brylinski-Deligne (BD) covers of split tori over $p$-adic fields without assuming tameness, which we call pseudo-spherical representations, and develop the theory of these representations. 

We begin with some notation and background before describing our results in more detail.

Let $\Tbb$ be a split torus over a $p$-adic field $F$ with cocharacter lattice $Y$. The group of $F$-points $T=\Tbb(F)$ has a unique maximal compact subgroup $T_{0}$. Then a (smooth) $T$-representation $\chi$ is called spherical (or unramified) if $\chi$ contains nonzero vectors fixed by $T_{0}$.

Now assume that the $n$-th roots of unity in $F$ form a cyclic group $\mu_{n}$ of order $n$. Let $\Tt$ be an $n$-fold BD cover of $T$. So $\Tt$ fits into a central extension
\begin{equation}\label{Eq:CExtTor}
    \mu_{n}\hookrightarrow \Tt\twoheadrightarrow T.
\end{equation}
Let $\Tt_{0}\subset \Tt$ be the unique maximal compact subgroup.

For the $\epsilon$-genuine $\Tt$-representations, meaning $\mu_{n}$ acts through a faithful character $\epsilon:\mu_{n}\rightarrow \C^{\times}$, we would like to have an analogous notion of spherical representations. When $\gcd(n,p)=1$ (i.e. $\Tt$ is tame), the extension \eqref{Eq:CExtTor} splits over $T_{0}$. Let $\mathbf{s}:T_{0}\rightarrow \Tt$ be a splitting. Then we have an isomorphism
\begin{equation*}
    \mu_{n}\times T_{0}\cong \Tt_{0}
\end{equation*}
defined by $(\zeta,t)\mapsto \zeta\cdot \mathbf{s}(t)$. We say a genuine $\Tt$-representation $\pi$ is $\mathbf{s}$-spherical if $\pi$ contains nonzero vectors fixed by $\mathbf{s}(T_{0})$. This notion provides a suitable foundation for a theory of spherical and Iwahori-spherical representations for tame BD covers of connected reductive groups. (See \cite{S04, Gan_Savin}.)

But, if $\gcd(n,p)\neq 1$, it is not obvious how to define genuine ``spherical'' representations of $\Tt$, because the extension \eqref{Eq:CExtTor} need not split over $T_{0}$.

Our main contribution in this work is to define a suitable notion of spherical representations of $\Tt$ beyond the tame case. We call these $\Tt$-representations and the analogous $\Tt_{0}$-representations pseudo-spherical (Definition \ref{Def:PSRep}), following the terminology of \cite{ABPTV, LS10a, LS10b, W09}. This is the first step toward formulating a notion of spherical representations for not necessarily tame BD covers of $p$-adic reductive groups.

For our definition we need to identify a finite-index subgroup $T_{R}\subset T_{0}$ and a splitting $\mathbf{s}_{R}:T_{R}\rightarrow \Tt$ of the extension \eqref{Eq:CExtTor} over $T_{R}$. We construct $\mathbf{s}_{R}$ and prove that $\mathbf{s}_{R}(T_{R})$ is central in $\Tt_{0}$ in Proposition \ref{Prop:TRSplits}. The subgroup $T_{R}$ is defined in terms of $R$, the integral radical of the $n$-th order Hilbert symbol of $F$ (Definition \ref{Def:IntKer}), and is not a congruence subgroup in general.

With $\mathbf{s}_{R}$, we can state our main definition (Definition \ref{Def:PSRep}). 

\begin{Def}
    A genuine representation of $\Tt$ (or $\Tt_{0}$) is pseudo-spherical if it is irreducible and contains nonzero vectors fixed by $\mathbf{s}_{R}(T_{R})$.
\end{Def}
In our main theorem we enumerate and compute the dimension of all of the pseudo-spherical $\Tt_{0}$-representations. With a bit more notation we can state these formulas precisely. 

Let $\Ocal\subset F$ be the ring of integers and $\Ocal^{\times}$ its group of units. Let $\Yt\subset Y$ be the modified cocharacter lattice of $\Tbbt$ (see Section \ref{SS:BD_Covers}).
\begin{Thm}[Theorem \ref{Thm:Main_Thm}]
    Let $\tau$ be a pseudo-spherical representation of $\Tt_{0}$. Then 
    \begin{equation*}
        \dim\tau = \Big|(Y/\Yt)\otimes (\Ocal^{\times}/R)\Big|^{1/2}.
    \end{equation*}
    The number of pseudo-spherical representations is
    \begin{equation*}
        \Big|(\Yt/nY)\otimes (\Ocal^{\times}/R)\Big|.
    \end{equation*}
\end{Thm}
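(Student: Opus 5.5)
The strategy is to reduce everything to the representation theory of a finite Heisenberg-type group. By Proposition \ref{Prop:TRSplits}, $\mathbf{s}_R(T_R)$ is central in $\Tt_0$. So a pseudo-spherical $\Tt_0$-representation is the same as an irreducible $\epsilon$-genuine representation of the finite central extension
\[
\mu_n \hookrightarrow \Tt_0/\mathbf{s}_R(T_R) \twoheadrightarrow T_0/T_R .
\]
Fix a basis of $Y$, so that $T_0\cong Y\otimes \Ocal^\times$. I expect $T_R$ to be identified with $Y\otimes R$, which gives $T_0/T_R\cong Y\otimes(\Ocal^\times/R)$. The commutator pairing on $\Tt_0$ is given by the BD formula
\[
[y_1\otimes a,\, y_2\otimes b]=(a,b)_n^{B_Q(y_1,y_2)},
\]
where $B_Q$ is the bilinear form attached to $Q$ and $(\cdot,\cdot)_n$ is the Hilbert symbol. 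Since $R$ is the integral radical of the Hilbert symbol restricted to $\Ocal^\times$, this pairing descends to a bimultiplicative alternating pairing
\[
c:\bigl(Y\otimes \Ocal^\times/R\bigr)\times\bigl(Y\otimes \Ocal^\times/R\bigr)\to\mu_n .
\]

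The irreducible $\epsilon$-genuine representations of a finite central extension $\mu_n\to H\to A$ with $A$ abelian are then described by standard Heisenberg theory. Let $Z\subset A$ be the radical of $\epsilon\circ c$; its preimage in $H$ is the center. The genuine irreducibles are in bijection with the genuine characters of the center, which form a torsor under $\widehat{Z}$, so there are $|Z|$ of them. Each has dimension $[A:Z]^{1/2}$. It therefore remains to show
\[
Z = \Yt\otimes(\Ocal^\times/R) \qand A/Z\cong (Y/\Yt)\otimes(\Ocal^\times/R).
\]
I would also need to justify $Z\cong(\Yt/nY)\otimes(\Ocal^\times/R)$. Here note that $\Ocal^\times/R$ is killed by $n$, since $(a^n,b)_n=1$ and hence $(\Ocal^{\times})^n\subset R$. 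Consequently $Y\otimes \Ocal^\times/R= (Y/nY)\otimes \Ocal^\times/R$, and the images agree.

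The main obstacle is computing the radical $Z$. The Hilbert symbol on $\Ocal^\times/R$ is nondegenerate by the definition of $R$, with values in $\mu_n$, but $\Ocal^\times/R$ need not be free over $\Z/n$ in the wild case, so one cannot just diagonalize. My plan is to write $\Ocal^\times/R\cong\bigoplus_i \Z/m_i$ with $m_i\mid n$, and to check that the Hilbert pairing is a perfect pairing of this finite group with itself. Then $y_1\otimes a$ lies in the radical iff $B_Q(y_1,y)\in$ the annihilator of the relevant $a$-components for every $y$. Using elementary divisors (a symplectic-type normal form for the Hilbert pairing compatible with the cyclic decomposition), this becomes the condition $B_Q(y_1,Y)\subset (n/m)\Z$, componentwise. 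I would then compare this with $\Yt=\{y: B_Q(y,Y)\subset n\Z\}$ and verify that the radical is exactly the image of $\Yt\otimes(\Ocal^\times/R)$, together with the correct contributions of $Y\otimes m$-torsion.

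This comparison needs care, since tensoring with $\Z/m$ is not exact on the inclusion $\Yt\subset Y$. Consequently, the radical might a priori be larger than the image of $\Yt\otimes \Ocal^\times/R$. I would handle this by counting rather than by identifying $Z$ directly. I would compute $|A/Z|$ as the size of the image of the map $A\to\widehat{A}$ induced by $c$, working factor by factor in the cyclic decomposition. Then I would check that the product of the two claimed cardinalities equals $|A|=|(Y/nY)\otimes \Ocal^\times/R|$.

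Finally, I would pass from $\Tt_0$ to $\Tt$ if needed for the stated enumeration. This is not needed for the theorem as stated, which is about $\Tt_0$-representations.
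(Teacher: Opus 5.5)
You reduce the problem the same way the paper does. Since $\s_R(T_R)$ is central, pseudo-spherical $\Tt_0$-representations are the $\epsilon$-genuine irreducibles of the finite Heisenberg-type group $\Tt_0/\s_R(T_R)$ over $A=Y\otimes(\Ocal^\times/R)$. Their number is $|Z|$ and their dimension is $[A:Z]^{1/2}$, where $Z$ is the radical of the commutator pairing.

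\textbf{The gap.} Computing $Z$ is the entire content of the theorem, and your plan for it lacks a key structural fact. Write $n=p^\ell m$ with $p\nmid m$.
\begin{itemize}
\item The prime-to-$p$ part of the Hilbert symbol is trivial on $\Ocal^\times\times\Ocal^\times$. So there $(a,b)_n$ is a power of $(a,b)_{p^\ell}$ and is $\mu_{p^\ell}$-valued.
\item $\Ocal^\times/R$ is a \emph{free} $\Z/p^\ell\Z$-module of rank $[F:\Q_p]$, and this form is non-singular on it. This holds because $\Ocal^\times/\Ocal^{\times p^\ell}\cong\mu_{p^\ell}\oplus(\Z/p^\ell\Z)^{[F:\Q_p]}$, and $R/\Ocal^{\times p^\ell}$ is cyclic of order $p^\ell$, hence a direct summand.
\end{itemize}
You leave this open and work with an arbitrary decomposition $\bigoplus_i\Z/m_i$, $m_i\mid n$. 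In that generality the dimension formula does not follow from your setup. Take $p$ odd, $Y=\Z$, $Q(y)=py^2$, $n=p^2$, and a hyperbolic $\mu_p$-valued plane $M=(\Z/p\Z)^2$ in place of $\Ocal^\times/R$. Then $\Yt=p\Z$ and the pairing on $Y\otimes M$ is $(a,b)^{2pyz}=1$. So $Z=A$ and every genuine irreducible is a character, whereas $|(Y/\Yt)\otimes M|^{1/2}=p$.

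Your closing check fails in the same example: $|(Y/\Yt)\otimes M|\cdot|(\Yt/nY)\otimes M|=p^4\neq p^2=|A|$. By right exactness, that product identity is equivalent to injectivity of $(\Yt/nY)\otimes M\to(Y/nY)\otimes M$. This is exactly where freeness over $\Z/p^\ell\Z$ is needed.

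\textbf{Two further errors.}
\begin{itemize}
\item A ``symplectic-type normal form'' for the Hilbert pairing on $\Ocal^\times/R$ does not exist when $p=2$. The form is skew-symmetric but not alternating: for $F=\Q_2$, $n=2$ one gets $\Ocal^\times/R=\langle -1\rangle$ with $(-1,-1)_2=-1$. This is why Proposition~\ref{Prop:Hilb_Decomp} allows rank-one pieces with $(u_i,u_i)_n=-1$ and rank-two pieces in which $(f_i,f_i)_n$ may be $-1$.
\item For a cyclic factor of order $m_i$, the componentwise condition should be $B_Q(y,Y)\subset m_i\Z$, not $(n/m_i)\Z$.
\end{itemize}

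\textbf{How to finish once you have freeness.} No normal form is needed. Take a $\Z/p^\ell\Z$-basis $x_1,\dots,x_d$ of $\Ocal^\times/R$ and write $(x_i,x_j)_n=\zeta^{h_{ij}}$. The matrix $(h_{ij})$ is invertible mod $p^\ell$. Hence $\sum_i y_i\otimes x_i$ lies in $Z$ if and only if every $y_i\in Y_{Q,p^\ell}$. This gives $Z\cong(Y_{Q,p^\ell}/p^\ell Y)^d$ and $A/Z\cong(Y/Y_{Q,p^\ell})^d$.

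The Chinese remainder isomorphisms $Y_{Q,p^\ell}/p^\ell Y\cong(\Yt/nY)\otimes\Z/p^\ell\Z$ and $Y/Y_{Q,p^\ell}\cong(Y/\Yt)\otimes\Z/p^\ell\Z$ then give the stated formulas. They also remove your non-exactness worry, since $Y_{Q,p^\ell}=\Yt+p^\ell Y$, so $Z$ is exactly the image of $\Yt\otimes(\Ocal^\times/R)$. The paper reaches the same description of $Z$ differently: it computes the centers of the pieces $\Mt_i,\Nt_i$ coming from the orthogonal decomposition of Proposition~\ref{Prop:Hilb_Decomp}.
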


Now assume that $T$ is a maximal split torus in a connected reductive group $G$. Building on the notion of distinguished genuine central characters from \cite{GG18}, which involves the coroot structure of $T\subset G$, we define distinguished pseudo-spherical representations (Definition \ref{Def:Dist_PS_Rep}). Under the mild hypotheses \ref{Hyp_eta_ext} and \ref{Hyp_eta_int} on the BD data we establish existence of distinguished representations and enumerate them, also in Theorem \ref{Thm:Main_Thm}.

Next we will present some evidence that the splitting $\mathbf{s}_{R}$ and our notion of pseudo-spherical representations are broadly valuable. To begin we confine ourselves to the representation theory of $\Tt$. First, our definition is general and uniform; it specializes to the usual notion of spherical in the tame case and the ad hoc notions of pseudo-spherical in the few wild cases appearing in the literature. 

Second, the splitting $\mathbf{s}_{R}$ allows us to prove general results about the representation theory of $\Tt$ and $\Tt_{0}$ beyond the pseudo-spherical representations. We highlight a few examples. In Theorem \ref{Thm:CmptBranch} we prove that the restriction from $\Tt$ to $\Tt_{0}$ is multiplicity-free. This allows us to prove in Corollary \ref{Cor:BK_Type} that every irreducible genuine representation of $\Tt_{0}$ is a type in the sense of Bushnell-Kutzko \cite{BK98}, generalizing \cite[Lemma 3.5]{Wang24} in the tame case. We go on to show that the Hecke algebra attached to each type is (non-canonically) isomorphic to the group algebra $\C[\Yt]$ (Theorem \ref{Thm:Hecke_Alg}). Using $\mathbf{s}_{R}$ we can also compute the center of $\Tt_{0}$ (Corollary \ref{C:centers_are_equal_mod_T_R}), which controls all of the irreducible representations of $\Tt_{0}$ of dimension greater than $1$.

Now we discuss the role of pseudo-spherical representations in the representation theory of $\Gt$, a BD cover of a reductive group $G$. We expect that the distinguished pseudo-spherical representations will provide the torus input for a theory of pseudo-spherical representations of $\Gt$, extending the usual spherical theory in the tame case. One concrete goal toward developing such a theory is to formulate and prove an analogue of the Borel-Casselman theorem for $\Gt$. A Borel-Casselman theorem for non-tame covers would also lay a foundation for generalizing the techniques developed in \cite{GGK24,GGK25,BP25,Zou25} to study Whittaker functionals for tame covers, and is also relevant to understanding the Fourier coefficients of metaplectic Eisenstein series at bad primes \cite{K20}. This goal of an analog of the Borel-Casselman theorem is not purely speculative; in \cite{KT25}, we prove such a theorem for the simply-laced Steinberg double covers over $2$-adic fields using our notion of pseudo-spherical torus representations, vastly generalizing \cite{LS10a, Karasiewicz}.

We conclude the introduction with a summary of the contents of each section. In Section \ref{S:Prelim} we introduce some preliminary material on Hilbert symbols and BD covers and analyze the integral radical $R$. In Section \ref{S:Main_Thm} we construct a splitting of $T_{R}$ in Proposition \ref{Prop:TRSplits} and then use this to define the notion of pseudo-spherical representations (Definitions \ref{Def:PSRep} and \ref{Def:Dist_PS_Rep}). Then we state our main theorem (Theorem \ref{Thm:Main_Thm}) on pseudo-spherical representations. In Sections \ref{S:Torus_Reps_I} we prove our main theorem by using the Heisenberg group structure of $\Tt_{0}/T_{R}$. In Section \ref{S:Torus_Reps_II} we use our notion of pseudo-spherical representations to further develop the representation theory of $\Tt$ and $\Tt_{0}$ proving, among other things, our results on branching (Theorem \ref{Thm:CmptBranch}) and Bushnell-Kutzko types (Corollary \ref{Cor:BK_Type}). In Section \ref{S:Hecke_Alg} we determine the structure of the Hecke algebras attached to genuine irreducible $\Tt_{0}$-representations. Appendix \ref{Appendix} describes the theory of skew-symmetric forms over local rings, which is important for our results in Subsection \ref{SS:Factor_T0}.


\begin{center}
    \normalfont\scshape Acknowledgments
\end{center}


The first author would like to thank the University of Melbourne for its hospitality during his visit in April 2026 and Chenyan Wu for many engaging discussions that helped to catalyze the ideas of this paper.

The second author would also like to thank the University of Melbourne for its hospitality during his visit in August and September 2026, during which the paper was completed, and Chenyan Wu for applying to the University of Melbourne’s Visiting Scholar Program, which made this visit possible. The second author was partially supported by JSPS KAKENHI Grant Number 24K06648.


\section{Preliminaries}\label{S:Prelim}


Let $p\in\Z$ be a prime number. Let $F$ be a finite field extension of the $p$-adic numbers $\Q_{p}$. Let $d = [F:\Q_{p}]$. Let $\Ocal\subset F$ be the ring of integers of $F$ and $\varpi\in F$ a uniformizer. Let $\mu(F)$ be the roots of unity in $F$ and for $k\in\Z_{\geq 1}$ let $\mu_{k}(F)$ be the $k$-th roots of unity in $F$. From now on we suppose that $|\mu_{n}(F)|=n$ and we will just write $\mu_{n} = \mu_{n}(F)$.

Suppose $H$ is a group. Let $H^{\vee}:=\mathrm{Hom}(H,\mathbb{C}^{\times})$ be its group of characters and let $Z(H)$ be its center. If $H$ is an $l$-group in the sense of \cite{BZ76}, we write $\Irr(H)$ for the set of isomorphism classes of smooth irreducible representations of $H$. If $J\subset H$ is a subgroup and $\chi$ is a character of $J$ then we write $\Irr_{\chi}(H)\subset \Irr(H)$ for the subset of $\chi$-isotypic representations. All representations considered in this paper will be smooth representations of $l$-groups.

\subsection{Hilbert symbol and integral radical}

Let 
\[(-,-)=(-,-)_{n}:F^{\times}/F^{\times n}\times F^{\times}/F^{\times n}\rightarrow \mu_{n}\]
be the $n$-th order Hilbert symbol of $F$. See \cite[Chapter V, \S 3]{Neukirch} for details. 

\begin{Lem}\label{PDHilb}
	The group homomorphism $\phi_{F,n}:F^{\times}/F^{\times n}\rightarrow (F^{\times}/F^{\times n})^{\vee}$ defined by $x \mapsto (x,-)_n$ induces an isomorphism of finite abelian groups.
\end{Lem}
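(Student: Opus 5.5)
The plan is to deduce the statement from local class field theory, namely the nondegeneracy of the Hilbert symbol as in \cite[Chapter V, \S 3]{Neukirch}, together with a finiteness count. First, $F^{\times}/F^{\times n}$ is a finite abelian group. Write $F^{\times}\cong \Z\times \mu(F)\times U^{(1)}$, where $U^{(1)}$ is a finitely generated $\Z_p$-module of rank $d$. From this, $|F^{\times}/F^{\times n}| = n^{2}\,|n|_F^{-1}$ is finite.

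Next, $\phi_{F,n}$ is a well-defined homomorphism. The $n$-th order Hilbert symbol is bimultiplicative and trivial whenever either argument lies in $F^{\times n}$. Hence $x\mapsto (x,-)_n$ is a homomorphism from $F^{\times}/F^{\times n}$ to $\Hom(F^{\times}/F^{\times n},\mu_n)$. Composing with the fixed embedding $\mu_n\hookrightarrow \C^{\times}$ lands it in $(F^{\times}/F^{\times n})^{\vee}$.

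For injectivity, suppose $(x,y)_n=1$ for all $y\in F^{\times}$. The key input is the nondegeneracy statement in Neukirch (Ch.\ V, Prop.\ 3.2 / Thm.\ 3.4): the Hilbert symbol $(x,y)_n$ is characterized by $\mathrm{rec}(y)(\sqrt[n]{x})=(x,y)\sqrt[n]{x}$ (up to the sign convention for the order of arguments). Triviality for all $y$ means the reciprocity map of $F^{\times}$ fixes $\sqrt[n]{x}$. By the surjectivity of the reciprocity map onto $\mathrm{Gal}(F(\sqrt[n]{x})/F)$, this gives $\sqrt[n]{x}\in F$, that is, $x\in F^{\times n}$. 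This nondegeneracy is the only substantive step, and it is exactly where the hypothesis $|\mu_n(F)|=n$ is used: Kummer theory then identifies $F^{\times}/F^{\times n}$ with $\Hom(G_F,\mu_n)$.

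Finally, $(F^{\times}/F^{\times n})^{\vee}$ has the same order as $F^{\times}/F^{\times n}$, since the latter is a finite abelian group of exponent dividing $n$ and its characters take values in $\mu_n\subset\C^\times$. An injective homomorphism between finite groups of equal order is therefore an isomorphism. I expect no real obstacle beyond citing the nondegeneracy correctly; the remaining steps are routine.
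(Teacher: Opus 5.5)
Your proof is correct and follows the same route as the paper: nondegeneracy of the Hilbert symbol gives injectivity of $\phi_{F,n}$, and equality of orders between the finite group $F^{\times}/F^{\times n}$ and its character group gives surjectivity. One small slip does not affect the argument: writing $F^{\times}\cong \Z\times\mu(F)\times U^{(1)}$ double-counts the $p$-power roots of unity, since they already lie in $U^{(1)}$; the correct form is $\Z\times\mu(F)\times\Z_p^{d}$, and this still gives your count $n^{2}|n|_F^{-1}$.
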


\begin{proof}
	The Hilbert symbol on $F^{\times}/F^{\times n}$ is nondegenerate, so $\phi_{F,n}$ is injective. Since $F^{\times}/F^{\times n}$ is a finite abelian group, $\phi_{F,n}$  is also surjective.
\end{proof}

\begin{Lem}\label{Lem:IntTriv}
	There exists $r_{0}\in \mathcal{O}^{\times}$ such that $(r_{0},-)_{n}|_{\mathcal{O}^{\times}}$ is trivial and $(r_{0},\varpi)_{n}$ is a primitive $n$-th root of unity independent of the choice of $\varpi$.
\end{Lem}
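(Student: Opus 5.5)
We will construct $r_0$ using Lemma \ref{PDHilb}. Consider the character $\psi$ of $F^{\times}/F^{\times n}$ that is trivial on the image of $\mathcal{O}^{\times}$ and sends $\varpi$ to a fixed primitive $n$-th root of unity $\zeta$. Since $F^{\times}=\varpi^{\Z}\times\mathcal{O}^{\times}$, we have
\[F^{\times}/F^{\times n}\cong \Z/n\Z\times \mathcal{O}^{\times}/\mathcal{O}^{\times n}.\]
Hence $\psi$ is a well-defined character: it is the projection to $\Z/n\Z$ followed by $k\mapsto\zeta^k$. By Lemma \ref{PDHilb} there is a unique class $x\in F^{\times}/F^{\times n}$ with $(x,-)_n=\psi$. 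Then $(x,-)_n$ is trivial on $\mathcal{O}^{\times}$ and $(x,\varpi)_n=\zeta$ is primitive.

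\textbf{Step 1: $x$ can be represented by a unit.} Write $x=\varpi^{k}u$ with $u\in\mathcal{O}^{\times}$. We must show $k\equiv 0 \pmod n$, so that $r_0:=u$ represents the same class. This is the main point. Standard properties of the Hilbert symbol give $(\varpi,-\varpi)_n=1$, so
\[(\varpi,\varpi)_n=(\varpi,-1)_n.\]
Then
\[1=(x,-1)_n=(\varpi,-1)_n^{k}\,(u,-1)_n=(\varpi,-1)_n^{k},\]
where the first equality holds because $-1\in\mathcal{O}^{\times}$. This alone does not force $n\mid k$, so a different approach is needed.

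\textbf{Alternative for Step 1: count orthogonal complements.} Let $U$ be the image of $\mathcal{O}^{\times}$ in $F^{\times}/F^{\times n}$. It has index $n$, since the quotient is $\Z/n\Z$. Its annihilator $U^{\perp}$ under the nondegenerate pairing of Lemma \ref{PDHilb} therefore has order $n$. We show $U^{\perp}\subseteq U$. Since the symbol is skew, $(v,v)_n=(v,-1)_n$, and the annihilator of $U$ is the image under Lemma \ref{PDHilb} of $(F^{\times}/U)^{\vee}\cong\Z/n\Z$, it is cyclic of order $n$. The cleanest way to see $U^{\perp}\subseteq U$ is through the explicit unramified symbol. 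By local class field theory, $(a,\varpi)_n$ for a unit $a$ is computed by the Artin map. The unramified extension $F(\sqrt[n]{\eta})$ with $\eta\in\mathcal{O}^{\times}$, which exists since Kummer theory gives the unramified degree-$n$ extension as $F(\sqrt[n]{\eta})$ for some unit $\eta$, has norm group $\varpi^{n\Z}\mathcal{O}^{\times}$. Hence $(\eta,y)_n=1$ exactly when $y\in\varpi^{n\Z}\mathcal{O}^{\times}F^{\times n}$, i.e.\ when $y\in U$. So $(\eta,-)_n$ is trivial on $\mathcal{O}^{\times}$ and nontrivial of exact order $n$ on $\varpi$: the class of $\varpi$ generates $F^{\times}/U\cong\Z/n\Z$, and $(\eta,-)_n$ induces a faithful character of this quotient. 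Thus $(\eta,\varpi)_n$ is primitive, and we take $r_0=\eta$.

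\textbf{Step 2: independence of $\varpi$.} Any other uniformizer has the form $\varpi'=\varpi v$ with $v\in\mathcal{O}^{\times}$. Then
\[(r_0,\varpi')_n=(r_0,\varpi)_n\,(r_0,v)_n=(r_0,\varpi)_n,\]
using that $(r_0,-)_n$ is trivial on $\mathcal{O}^{\times}$.

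The main obstacle is Step 1, namely producing a \emph{unit} representative. I would settle it with the Kummer description of the unramified extension of degree $n$. This uses $\mu_n\subset F$, and the existence of a unit $\eta$ with $F(\sqrt[n]{\eta})/F$ unramified of degree $n$, which holds because the unramified extension is generated by roots of unity of order prime to $p$ and is cyclic. Combined with the norm-group characterization of the Hilbert symbol, this gives the result.
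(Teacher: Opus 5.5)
Your final argument is correct, but it takes a different route from the paper's. The paper stays with the formal properties of the symbol. It produces $r_0\in F^\times$ exactly as you produce $x$, by dualizing $1\to\Ocal^\times/\Ocal^{\times n}\to F^\times/F^{\times n}\to F^\times/\Ocal^\times F^{\times n}\to 1$ and applying Lemma \ref{PDHilb}. It then makes $r_0$ a unit using $(r_0,-r_0)_n=1$: if $r_0=\varpi^k u$, then $(r_0,\varpi)_n^k=(r_0,\varpi)_n^k(r_0,u)_n=(r_0,r_0)_n=(r_0,-1)_n=1$, so $n\mid k$.

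Your first attempt at Step 1 was one line from this. The fix is to evaluate the known character $(x,-)_n$ at $x$ itself, rather than expanding $x$ in the first slot. That expansion also silently used $(u,-1)_n=1$, which fails in general, e.g.\ $(-1,-1)_2=-1$ over $\Q_2$.

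Your replacement instead identifies $r_0$ with a Kummer generator $\eta$ of the unramified extension $L/F$ of degree $n$, and reads off the symbol from the norm group $\varpi^{n\Z}\Ocal^\times$. This uses more machinery than the paper's two-line argument: the norm-residue description of the symbol, norm groups of unramified extensions, and Kummer theory. In return it gives a conceptual fact the paper does not state. Combined with Lemma \ref{L:integral_radical_is_cyclic}, it shows that $R=\eta^{\Z}\Ocal^{\times n}$ is exactly the set of units $r$ for which $F(\sqrt[n]{r})/F$ is unramified.

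One step needs a better justification: that the Kummer generator of $L$ can be taken to be a unit. Your stated reason, that $L$ is generated by roots of unity of order prime to $p$, does not give this. In the wild case no such root of unity can serve as the generator. Writing $n=p^{\ell}m$ with $p\nmid m$, any root of unity $\xi\in F$ of prime-to-$p$ order is the $p^{\ell}$-th power of another such root $\xi'$. Hence $F(\sqrt[n]{\xi})=F(\sqrt[m]{\xi'})$ has degree at most $m<n$.

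The correct argument is a valuation count. Kummer theory gives $L=F(\sqrt[n]{a})$ for some $a\in F^\times$. Since $e(L/F)=1$, we have $n\,v_L(\sqrt[n]{a})=v_L(a)=v_F(a)$, so $n\mid v_F(a)$. Thus $a$ is a unit modulo $F^{\times n}$, and you can replace it by that unit. With this in place, the rest is fine: units are norms from $L$, $(\eta,\varpi^j)_n=1$ if and only if $n\mid j$, and Step 2 holds.
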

\begin{proof}
    The inclusion $\Ocal^{\times} \subset F^{\times}$ induces an exact sequence:
\begin{equation*}
	1\rightarrow \mathcal{O}^{\times}/\mathcal{O}^{\times n}\rightarrow F^{\times}/F^{\times n}\rightarrow F^{\times}/\mathcal{O}^{\times}F^{\times n} \rightarrow 1.
\end{equation*}
By Pontryagin duality we have
\begin{equation*}
	1\rightarrow (F^{\times}/\mathcal{O}^{\times}F^{\times n})^{\vee}\rightarrow (F^{\times}/F^{\times n})^{\vee}\rightarrow (\mathcal{O}^{\times}/\mathcal{O}^{\times n})^{\vee}\rightarrow 1.
\end{equation*}
By the isomorphism $F^{\times}/\mathcal{O}^{\times}F^{\times n}\cong \mathbb{Z}/n\mathbb{Z}$ and Lemma \ref{PDHilb}, there exists $r_{0}\in F^{\times}$ satisfying the conditions of the lemma. To see we may take $r_{0}\in\mathcal{O}^{\times}$, we use the identity $(r_{0},-r_{0})_{n}=1$.
\end{proof}

\begin{Def}\label{Def:IntKer}
	We set
    \[
    R:=\{r\in\Ocal^\times\st \text{$(r,-)_n|_{\Ocal^\times}$ is trivial}\}
    \]
    and call it the {\it integral radical} of the Hilbert symbol.
\end{Def}

\begin{Lem}\label{L:integral_radical_is_cyclic}
The group $R/\mathcal{O}^{\times n}$ is a cyclic group of order $n$ generated by the element $r_{0}$ from Lemma \ref{Lem:IntTriv}.
\end{Lem}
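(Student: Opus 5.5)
The plan is to identify $R/\Ocal^{\times n}$ with the annihilator of $\Ocal^{\times}/\Ocal^{\times n}$ under the Hilbert pairing, and to compute that annihilator by duality as in the proof of Lemma \ref{Lem:IntTriv}. First, $\Ocal^{\times n}\subset R$, since $(x^{n},y)_n=(x,y)_n^{n}=1$. So $R/\Ocal^{\times n}$ is a subgroup of $\Ocal^{\times}/\Ocal^{\times n}$, which we regard inside $F^{\times}/F^{\times n}$ via the injection in the exact sequence from the proof of Lemma \ref{Lem:IntTriv}.

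Next, let $A\subset F^{\times}/F^{\times n}$ be the annihilator of the subgroup $\Ocal^{\times}/\Ocal^{\times n}$ under the Hilbert symbol. By Lemma \ref{PDHilb}, $\phi_{F,n}$ maps $A$ isomorphically onto $(F^{\times}/\Ocal^{\times}F^{\times n})^{\vee}\cong(\Z/n\Z)^{\vee}$. Hence $A$ is cyclic of order $n$. By definition, $R/\Ocal^{\times n}=A\cap(\Ocal^{\times}/\Ocal^{\times n})$.

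It therefore suffices to show that $A$ is contained in the image of $\Ocal^{\times}$, and that the class of $r_0$ generates $A$.
\begin{itemize}
\item The element $r_0$ from Lemma \ref{Lem:IntTriv} lies in $\Ocal^{\times}$ and satisfies $(r_0,-)_n|_{\Ocal^\times}=1$, so its class lies in $A\cap(\Ocal^\times/\Ocal^{\times n})$.
\item The character $\phi_{F,n}(r_0)$ kills $\Ocal^{\times}$ and sends $\varpi$ to a primitive $n$-th root of unity. Since $F^{\times}/\Ocal^{\times}F^{\times n}$ is generated by the image of $\varpi$, this character has order exactly $n$ in $(F^{\times}/\Ocal^{\times}F^{\times n})^{\vee}$.
\item Because $\phi_{F,n}$ is injective, the class of $r_0$ has order $n$ in $A$, and so generates $A$.
\end{itemize}
Consequently $A=\langle r_0\rangle\subset \Ocal^{\times}/\Ocal^{\times n}$, so $A\cap(\Ocal^{\times}/\Ocal^{\times n})=A$. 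This shows $R/\Ocal^{\times n}=A$ is cyclic of order $n$, generated by $r_0$.

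The only delicate point is the containment $A\subset\Ocal^{\times}/\Ocal^{\times n}$: a priori an element of $F^{\times}$ of nonzero valuation might annihilate $\Ocal^{\times}$. This is handled entirely by the order count above, which is exactly why Lemma \ref{Lem:IntTriv} allowed us to choose $r_0\in\Ocal^\times$ (via the identity $(r_0,-r_0)_n=1$).
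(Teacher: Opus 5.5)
Your proof is correct and is essentially the paper's argument. The paper only says that the lemma follows immediately from Lemma \ref{Lem:IntTriv}, and you have written out that implication: the annihilator of $\Ocal^{\times}/\Ocal^{\times n}$ in $F^{\times}/F^{\times n}$ is cyclic of order $n$ by the duality used in that lemma's proof, and the unit $r_0$ has order $n$ in it, so it generates the annihilator, which therefore lies inside $\Ocal^{\times}/\Ocal^{\times n}$ and equals $R/\Ocal^{\times n}$.
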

\begin{proof}
    This immediately follows from Lemma \ref{Lem:IntTriv}.
\end{proof}

One can readily see that $R\subset \mathcal{O}^{\times}$ is the subgroup containing $\Ocal^{\times n}$ such that $RF^{\times n}/F^{\times n}$ corresponds to $(F^{\times}/\Ocal^{\times}F^{\times n})^{\vee} \subset (F^{\times}/F^{\times n})^{\vee}\cong F^{\times}/F^{\times n}$. In \cite{KT25} we give an explicit description of $R$ when $n=2$ and $F$ is any finite extension of $\Q_{2}$. When $n=p$ is odd and $F=\Q_{p}(\zeta_{p})$, where $\zeta_{p}$ is a primitive $p$-th root of unity, then $R=(1+p(1-\zeta_{p})\Ocal)\Ocal^{\times p}$ as can be seen from \cite[Exercise 2.13]{CFBook1967}.


\subsection{Decomposition of Hilbert symbol}\label{SS:HilbDecomp}


We can factor $n$ as 
\[
n=p^{\ell}m,
\]
where $\gcd(p, m)=1$. We can write
\[
\mu_n=\mu_{p^{\ell}}\times\mu_m.
\]
Note that the natural surjection $\mu_n\to\mu_{p^{\ell}}$ is given by $\eta\mapsto\eta^m$. Similarly for the surjection $\mu_n\to\mu_m$.

The Hilbert symbol $(-,-)_n$ decomposes into a product of $(-,-)_{p^{\ell}}$ and $(-,-)_m$ as follows. First note that for each $a, b\in F^\times$, we have
\[
(a,b)_{p^{\ell}}=(a, b)_n^m\in\mu_{p^{\ell}}\qand
(a,b)_{m}=(a, b)_n^{p^{\ell}}\in\mu_{m}.
\]
Indeed, this is given by composing with the natural surjections $\mu_n\to\mu_{p^{\ell}}$ and $\mu_n\to\mu_m$. Now, let $A, B\in \Z$ be such that
\[
Ap^{\ell}+Bm=1.
\]
Then
\[
(a, b)_n=(a, b)_n^{Ap^{\ell}+Bm}=(a, b)_{p^{\ell}}^B\cdot (a, b)_m^A.
\]

By restricting the Hilbert symbol $(-,-)_n$ to $\Ocal^\times\times\Ocal^\times$, we obtain
\[
(a, b)_n=(a, b)_{p^{\ell}}^B.
\]
In this way, we view the Hilbert symbol $(-,-)_n$ restricted to $\Ocal^\times\times\Ocal^\times$ as a skew-symmetric bilinear form over $\mu_{p^{\ell}}$. In particular, for all $a, b\in \Ocal^\times$ and $k\in\Z/ p^{\ell}\Z$, we have
\[
(a, b)_n=(b, a)_n^{-1}\qand (a^k, b)_n=(a, b^k)_n=(a, b)_n^k.
\]
The latter is well-defined for $k\mod{p^{\ell}}$ because $(a, b)_{p^{\ell}}^{p^{\ell}}=1$. Also if we fix a primitive $p^{\ell}$-th root of unity $\zeta\in \mu_{p^{\ell}}$, we have the isomorphism
\[
\Z/ p^{\ell}\Z\xrightarrow{\sim}\mu_{p^{\ell}},\quad s\mapsto \zeta^s.
\]
We can then view $(-,-)_n$ as a non-singular skew-symmetric bilinear form 
\begin{equation}\label{E:n-th_Hilbert=p^ell-the_Hilbert}
(-,-)_n=(-,-)_{p^{\ell}}^B:\Ocal^\times/ R\times\Ocal^\times/ R\longrightarrow\mu_{p^{\ell}}.
\end{equation}

In what follows, we decompose this form by viewing $\Ocal^\times/ R$ as a module over the local ring $\Z/ p^{\ell}\Z$. 

\begin{Lem}
    The $\Z/ p^{\ell}\Z$-module $\Ocal^\times/ R$ is free of rank $[F:\Q_p]$.
\end{Lem}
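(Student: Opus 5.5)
Since $\Ocal^\times/R$ carries a non-singular skew-symmetric form valued in $\mu_{p^\ell}\cong\Z/p^\ell\Z$, the natural route is to compute the structure of $\Ocal^\times/R$ through its Pontryagin dual. We will show that $(\Ocal^\times/R)^\vee$ is free over $\Z/p^\ell\Z$ of rank $d=[F:\Q_p]$, and then use that Pontryagin duality preserves the isomorphism type of a finite abelian group.

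First we show that $\Ocal^\times/R$ is killed by $p^\ell$. The pairing \eqref{E:n-th_Hilbert=p^ell-the_Hilbert} takes values in $\mu_{p^\ell}$, so for $a\in\Ocal^\times$ and $b\in\Ocal^\times$ we have $(a^{p^\ell},b)_n=(a,b)_n^{p^\ell}=1$. Hence $a^{p^\ell}\in R$, and $\Ocal^\times/R$ is a $\Z/p^\ell\Z$-module.

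Next we identify the dual. By definition of $R$, the map $a\mapsto (a,-)_n|_{\Ocal^\times}$ gives an injection $\Ocal^\times/R\hookrightarrow(\Ocal^\times/\Ocal^{\times n})^\vee$. We claim it is an isomorphism. Lemma \ref{PDHilb} gives $F^\times/F^{\times n}\cong (F^\times/F^{\times n})^\vee$, and the dual exact sequence in the proof of Lemma \ref{Lem:IntTriv} shows that restriction $(F^\times/F^{\times n})^\vee\to(\Ocal^\times/\Ocal^{\times n})^\vee$ is surjective. So every character of $\Ocal^\times/\Ocal^{\times n}$ has the form $(x,-)_n|_{\Ocal^\times}$ for some $x\in F^\times$. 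Writing $x=\varpi^k u$ with $u\in\Ocal^\times$, the factor $(\varpi,-)_n|_{\Ocal^\times}$ has to be replaced by a character coming from a unit. This is the main obstacle. The fix is to use $(\varpi,-\varpi)_n=1$ to write $(\varpi,u)_n=(\varpi,-u^{-1}\cdots)$; more cleanly, the cardinality count below also works: $|\Ocal^\times/R| = |\Ocal^\times/\Ocal^{\times n}|/n$ by Lemma \ref{L:integral_radical_is_cyclic}, while the image of the restriction from units is the kernel of evaluation at $\varpi$ modulo units, which also has index $n$. Either way we get $\Ocal^\times/R\cong\big((\Ocal^\times/\Ocal^{\times n})^\vee\big)[p^\ell]$, or more precisely the $p^\ell$-torsion-compatible part, i.e.\ $\Ocal^\times/R\cong \Hom(\Ocal^\times/\Ocal^{\times n},\mu_{p^\ell})$.

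Finally we compute $\Hom(\Ocal^\times,\mu_{p^\ell})$. Write $\Ocal^\times\cong \mu_{q-1}\times\mu_{p^\infty}(F)\times\Z_p^{d}$, where the $p$-power roots of unity form a cyclic group of order $p^a$ with $a\ge\ell$, since $\mu_{p^\ell}\subset F$. Characters of order dividing $p^\ell$ kill $\mu_{q-1}$ and give $(\Z/p^\ell\Z)^d\oplus \Z/p^\ell\Z$. This has rank $d+1$, which is too large by one cyclic factor of order $p^\ell$. That factor is exactly the contribution that quotienting by $R$ removes relative to $\Ocal^{\times n}$: the subgroup $R/\Ocal^{\times n}$ is cyclic of order $n$, with $p$-part $\Z/p^\ell\Z$. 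We must check that this $p$-part is a direct summand. It is, because $(\Ocal^\times/\Ocal^{\times p^\ell})\cong(\Z/p^\ell\Z)^{d+1}$ is free and, over the local ring $\Z/p^\ell\Z$, a cyclic submodule of order $p^\ell$ is generated by an element of exact order $p^\ell$, hence is a free rank-one summand. The quotient is therefore free of rank $d$, which proves the lemma.
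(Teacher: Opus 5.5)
Your second paragraph contains a false step. The injection $\Ocal^\times/R\hookrightarrow(\Ocal^\times/\Ocal^{\times n})^\vee$ is not an isomorphism. Your fallback $\Ocal^\times/R\cong\Hom(\Ocal^\times/\Ocal^{\times n},\mu_{p^\ell})$ is not one either.

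Count orders. We have $|\Ocal^\times/\Ocal^{\times n}|=n\,p^{\ell d}$, so $|\Ocal^\times/R|=p^{\ell d}$ by Lemma \ref{L:integral_radical_is_cyclic}. On the other hand, $\Hom(\Ocal^\times/\Ocal^{\times n},\mu_{p^\ell})\cong(\Z/p^\ell\Z)^{d+1}$ has order $p^{\ell(d+1)}$, as your own last paragraph computes. Taken at face value, that step would make $\Ocal^\times/R$ free of rank $d+1$, contradicting the statement, so the proposal is internally inconsistent.

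What is actually true is this. The image of $a\mapsto(a,-)_n|_{\Ocal^\times}$ consists of characters trivial on $R$, because $(a,r)_n=(r,a)_n^{-1}=1$ for $r\in R$. By counting, the image therefore equals $\Hom(\Ocal^\times/R,\mu_{p^\ell})$, the annihilator of $R/\Ocal^{\times p^\ell}$. That is just Lemma \ref{Lem:OR_nonsing}. It says nothing about freeness, since every finite abelian group of exponent dividing $p^\ell$ is isomorphic to its $\mu_{p^\ell}$-dual. To get freeness from the dual side, you would need that annihilator to be free of rank $d$ inside $(\Z/p^\ell\Z)^{d+1}$, which is the same direct-summand fact you prove at the end. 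So the duality route gives nothing here and should be deleted rather than repaired.

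Your final paragraph, on the other hand, is a complete proof on its own, and it is exactly the paper's argument:
\begin{itemize}
\item $\Ocal^\times/\Ocal^{\times p^\ell}\cong\mu_{p^\ell}\oplus(\Z_p/p^\ell\Z_p)^d$ is free of rank $d+1$;
\item $\Ocal^{\times p^\ell}\subset R$, by your first paragraph;
\item $R/\Ocal^{\times p^\ell}$ is cyclic of order $p^\ell$;
\item an element of exact order $p^\ell$ in a free $\Z/p^\ell\Z$-module has a unit coordinate, so it extends to a basis, and the quotient $\Ocal^\times/R$ is free of rank $d$.
\end{itemize}
One point should be stated precisely: you are quotienting by $R/\Ocal^{\times p^\ell}$, not by ``the $p$-part of $R/\Ocal^{\times n}$''. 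These agree because $R/\Ocal^{\times p^\ell}$ is the quotient of the cyclic group $R/\Ocal^{\times n}$ (of order $n=p^\ell m$) by $\Ocal^{\times p^\ell}/\Ocal^{\times n}$. The latter has order $|\Ocal^\times/\Ocal^{\times n}|/|\Ocal^\times/\Ocal^{\times p^\ell}|=m$.
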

\begin{proof}
    It is well-known that
    \[
    \Ocal^\times\cong \mu(F)\oplus \Z_p^{[F:\Q_p]},
    \]
    where $\mu(F)$ is the set of roots of unity in $F$, which is a cyclic group.
    Hence
    \[
    \Ocal^\times /\Ocal^{\times p^{\ell}}\cong \mu_{p^{\ell}}\oplus (\Z_p/ p^{\ell}\Z_p)^{[F:\Q_p]},
    \]
    noting that $\mu(F)/ \mu(F)^{p^{\ell}}\cong\mu_{p^{\ell}}$. So $\Ocal^\times /\Ocal^{\times p^{\ell}}$ is a free $\Z/p^{\ell}\Z$-module.
    Now,
    \[
    \Ocal^\times/ R\cong (\Ocal^\times/ \Ocal^{\times p^{\ell}})/ (R/\Ocal^{\times p^{\ell}}),
    \]
    and we know that $R/ \Ocal^{\times p^{\ell}}\cong\Z/ p^{\ell}\Z$ by Lemma \ref{L:integral_radical_is_cyclic}, which is a free module of rank 1. (Here, strictly speaking in Lemma \ref{L:integral_radical_is_cyclic} the group $R/ \Ocal^n$ is considered. But one can readily verify the assertion holds for $R/ \Ocal^{p^\ell}$ as well.) 
    
    By selecting a $\Z/p^{\ell}\Z$-basis for $\Ocal^\times /\Ocal^{\times p^{\ell}}$ we may identify $\Ocal^\times /\Ocal^{\times p^{\ell}}$ with the space of column vectors over $\Z/p^{\ell}\Z$, where we write $e_{j}$ for the column vector with $1$ in the $j$-th row and $0$ elsewhere. Let $r^{\prime}\in R/ \Ocal^{\times p^{\ell}}$ be an element of order $p^{\ell}$, and let us view it as a column vector $r^{\prime}\notin \oplus_{j}p\Z/p^{\ell}\Z e_{j}$. Using row operations we can transform the column vector of $r^{\prime}$ into $e_{1}$. So, $r^{\prime}$ can be extended to a $\Z/p^{\ell}\Z$-basis of $\Ocal^\times /\Ocal^{\times p^{\ell}}$ which implies that $(\Ocal^\times/ \Ocal^{\times p^{\ell}})/ (R/\Ocal^{\times p^{\ell}})$ is a free $\Z/p^{\ell}\Z$-module of rank $[F:\Q_{p}]$.
\end{proof}

\begin{Lem}\label{Lem:OR_nonsing}
    The skew-symmetric form $(-,-)_n$ on $\Ocal^\times/ R$ is non-singular in the sense of Appendix \ref{Appendix}.
\end{Lem}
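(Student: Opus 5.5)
Non-singularity in the sense of Appendix \ref{Appendix} is presumably the condition that the map $\Ocal^\times/R \to \Hom_{\Z/p^{\ell}\Z}(\Ocal^\times/R, \Z/p^{\ell}\Z)$ induced by the form is an isomorphism, after identifying $\mu_{p^\ell}$ with $\Z/p^\ell\Z$ via $\zeta$. Since $\Ocal^\times/R$ is free of rank $d=[F:\Q_p]$ by the preceding lemma, both sides are finite sets of the same cardinality $p^{\ell d}$. It therefore suffices to prove that this map is injective, i.e.\ that the radical of $(-,-)_n$ on $\Ocal^\times/R$ is trivial. (If the appendix instead uses ``Gram matrix invertible over $\Z/p^\ell\Z$'', this is equivalent for free modules over a finite local ring.)

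First, the form is well defined on $\Ocal^\times/R$. By Definition \ref{Def:IntKer}, every $r\in R$ satisfies $(r,b)_n=1$ for all $b\in\Ocal^\times$. By skew-symmetry we also have $(a,r)_n=1$. Bilinearity then shows that the form descends to $\Ocal^\times/R$, as already recorded in \eqref{E:n-th_Hilbert=p^ell-the_Hilbert}.

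For injectivity, suppose $a\in\Ocal^\times$ satisfies $(a,b)_n=1$ for all $b\in\Ocal^\times$. Then $(a,-)_n|_{\Ocal^\times}$ is trivial, so $a\in R$ by the very definition of $R$. Hence the class of $a$ in $\Ocal^\times/R$ is trivial. We also need the identification with $\Z/p^\ell\Z$ to be compatible: the values lie in $\mu_{p^\ell}$ because $(a,b)_n=(a,b)_{p^\ell}^B$ on units. Moreover, the map $b\mapsto(a,b)_n$ is $\Z/p^\ell\Z$-linear, as noted in Subsection \ref{SS:HilbDecomp}. So the induced map is an injective homomorphism of finite groups of equal order, hence bijective.

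The only real subtlety is matching the precise definition of non-singular in the appendix. If it demands a unit Gram determinant, I would note that injectivity of $M\to M^\vee$ for a free module over $\Z/p^\ell\Z$ forces the reduction mod $p$ to be injective. Otherwise some nonzero vector $v$ mod $p$ in the kernel would lift, after multiplying by $p^{\ell-1}$, to a nonzero radical element. So the Gram matrix is invertible mod $p$, hence over $\Z/p^\ell\Z$.
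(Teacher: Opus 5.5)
Your proposal is correct and follows essentially the same route as the paper: injectivity of $a\mapsto (-,a)_n$ comes directly from the definition of $R$, and surjectivity follows because $\Ocal^\times/R$ is free of rank $[F:\Q_p]$, so both sides have the same finite cardinality. Your extra remarks on well-definedness and the Gram-determinant reformulation are sound but not needed, since the appendix defines non-singularity as the map to the dual being an isomorphism.
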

\begin{proof}
We have to show the natural map
\[
\Ocal^\times/ R\longrightarrow\Hom_{\mu_{p^{\ell}}}(\Ocal^\times/ R,\ \mu_{p^{\ell}}),\quad a\mapsto (-, a)_n,
\]
is an isomorphism. By the definition of the integral radical $R$, this map is injective. By the above lemma, one can see that both sides are finite sets with the same cardinality. The lemma follows.
\end{proof}

Lemma \ref{Lem:OR_nonsing} and Theorem \ref{T:decomposition_of_skew-symmetric-module} imply the following. 
\begin{Prop}\label{Prop:Hilb_Decomp}
    Let $\zeta$ be a fixed primitive $p^\ell$-th root of unity. As a skew-symmetric $\Z/ p^{\ell}\Z$-module, $\Ocal^\times/ R$ has the following orthogonal sum decomposition.
    \begin{enumerate}[label=(\alph*), leftmargin=*]
        \item Assume $p\neq 2$. Then
        \[
            \Ocal^\times/ R=\la e_1, f_1\ra\oplus\cdots\oplus\la e_m, f_m\ra,
        \]
        where
        \[
            (e_i, e_i)_n=(f_i, f_i)_n=1\qand  (e_i, f_i)_n=\zeta.
        \]
        \item Assume $p=2$. Then
        \[
            \Ocal^\times/ R=\la u_1\ra\oplus\cdots\oplus\la u_k\ra\;\oplus\;\la e_1, f_1\ra\oplus\cdots\oplus\la e_m, f_m\ra,
        \]
        where the rank 1 module $\la u_i\ra$ can appear only when $\ell=1$, in which case
            \[
            (u_i, u_i)_n=-1,
            \]
        and the 2-dimensional module $\la e_i, f_i\ra$ can appear in any case and we have
            \[
            (e_i, e_i)_n=1,\quad (f_i, f_i)_n=\zeta^s\qand (e_i, f_i)_n=\zeta,
            \]
            where $s\in 2^{\ell-1}\Z/2^\ell\Z$ if $\ell>1$, and $s=0$ if $\ell=1$.
    \end{enumerate}
\end{Prop}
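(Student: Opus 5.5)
The plan is to apply the classification of non-singular skew-symmetric forms over $\Z/p^{\ell}\Z$ from Theorem \ref{T:decomposition_of_skew-symmetric-module} to the module $M:=\Ocal^\times/R$. Three inputs are needed, and then we read off which summands can occur. First, by the preceding lemma $M$ is free of rank $[F:\Q_p]$ over $\Z/p^{\ell}\Z$. Second, by Lemma \ref{Lem:OR_nonsing} the form $(-,-)_n$ on $M$ is non-singular. Third, via \eqref{E:n-th_Hilbert=p^ell-the_Hilbert} and the fixed generator $\zeta$, the form becomes a $\Z/p^{\ell}\Z$-valued bilinear form $b$ with $\zeta^{b(x,y)}=(x,y)_n$. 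Because $(a,b)_n=(b,a)_n^{-1}$, the form $b$ is skew-symmetric in the sense $b(x,y)=-b(y,x)$, though not necessarily alternating. With these in place, Theorem \ref{T:decomposition_of_skew-symmetric-module} gives an orthogonal decomposition of $M$ into rank-one pieces $\la u\ra$ and rank-two hyperbolic-type pieces $\la e,f\ra$ with the normal forms listed there. It then remains to decide, from the arithmetic of $b(x,x)$, which of these normal forms actually occur.

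For $p\neq 2$, skew-symmetry gives $2b(x,x)=0$. Since $2$ is a unit in $\Z/p^{\ell}\Z$, this forces $b(x,x)=0$, so the form is alternating. The classification then yields only symplectic planes with $(e_i,e_i)_n=(f_i,f_i)_n=1$ and $(e_i,f_i)_n=\zeta$; rank-one summands cannot occur, because a non-singular rank-one alternating form is zero. This is case (a). As a consistency check, the rank $[F:\Q_p]$ must then be even. This holds because $\mu_p\subset F$ forces $p-1$ to divide $[F:\Q_p]$.

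For $p=2$, skew-symmetry only gives $2b(x,x)=0$, so $b(x,x)\in 2^{\ell-1}\Z/2^{\ell}\Z$; that is, $(x,x)_n\in\{\pm1\}$, in accordance with $(x,x)_n=(x,-1)_n$. A rank-one non-singular summand $\la u\ra$ requires $b(u,u)$ to be a unit. When $\ell>1$ this is impossible, since $2^{\ell-1}$ is not a unit, so rank-one pieces appear only when $\ell=1$, where $b(u,u)=1$, i.e. $(u,u)_n=-1$. For the rank-two pieces, the theorem supplies a normal form with $b(e,f)=1$, $b(e,e)=0$ and $b(f,f)=s$. Here $s\in 2^{\ell-1}\Z/2^{\ell}\Z$ is automatic from the computation above. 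When $\ell=1$ and $s=1$, the plane is non-alternating; the appendix's normalization then either diagonalizes it into two $u$'s or normalizes $s=0$. This gives case (b).

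I expect the main difficulty to be matching precisely the normal forms produced by Theorem \ref{T:decomposition_of_skew-symmetric-module}, in particular its treatment of non-alternating skew forms over $\Z/2^{\ell}\Z$. If the appendix states only the existence of an orthogonal splitting into rank $\le 2$ pieces, I would normalize each plane by hand. Pick $e$ with $b(e,e)=0$ inside the plane: if the given basis $x,y$ has $b(x,x)\neq 0$, replace $x$ by $x+y$ or by $x+2^{\ell-1}$-adjustments, or otherwise use the diagonal form. Then rescale $f$ so that $b(e,f)=1$, and whatever value $b(f,f)$ takes lies in $2^{\ell-1}\Z/2^{\ell}\Z$ by the argument above.
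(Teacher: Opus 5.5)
Your proposal is correct and follows essentially the paper's route: apply Theorem \ref{T:decomposition_of_skew-symmetric-module} with $A=\Z/p^{\ell}\Z$, use $2b(x,x)=0$ to exclude rank-one pieces for $p$ odd or $\ell>1$, and then normalize each plane. One correction: the appendix only gives $b(e,f)=1+m$, $b(e,e)=a$, $b(f,f)=b$ with $a,b,m\in\mathfrak{m}$, not the normal form you quote in the $p=2$ paragraph, so your fallback hand-normalization is genuinely needed, exactly as in the paper's two extra remarks: rescale $f$ to get $\zeta$, and when $\ell>1$ pass to $e+f$ (i.e.\ $e_if_i$) if both diagonal values equal $2^{\ell-1}$, or swap $e$ and $f$ if only one does.
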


\begin{proof}
    This essentially follows from Theorem \ref{T:decomposition_of_skew-symmetric-module} by specializing to the case where the local ring $A$ is $\Z/ p^\ell\Z$, noting that the maximal ideal of $\mu_{p^{\ell}}\cong\Z/ p^{\ell}\Z$ is $\mu_{p^{\ell}}^{p}\cong p\Z/ p^{\ell}\Z$.

    We do need a couple of extra arguments, though. First, for $(e_i, f_i)_n$, Theorem \ref{T:decomposition_of_skew-symmetric-module} only implies that $(e_i, f_i)_n=\zeta^s$, where $s\equiv 1\pmod{p}$. But then one can replace $f_{i}$ by $f_{i}^{k}$ for a suitable $k$ because $\zeta^s$ is just another primitive $p^\ell$-th root of unity.

    Second, as for (b) with $\ell>1$, Theorem \ref{T:decomposition_of_skew-symmetric-module} only implies $(e_i, e_i)_n=\pm 1$ and $(f_i, f_i)_n=\pm 1$. But if $(e_i, e_i)_n=-1$ and $(f_i, f_i)_n=-1$, then $(e_if_i, e_if_i)_n=1$. So we can take $\{e_if_i, f_i\}$ as a basis. Thus we can assume $(e_i, e_i)_n=1$.
\end{proof}

\quad

In the decomposition of $\Ocal^\times/ R$ in the above proposition, there can be a rank 1 module $\la u_i\ra$ only when $p^\ell=2$, namely $p=2$ and $\ell=1$. In this case, we can be more explicit on the rank 1 modules as follows.

\begin{Prop}\label{P:orthogonal_decomposition_of_quadratic_Hilbert}
Assume $p^\ell=2$. One can choose the number $k$ of the rank 1 modules to be at most 2. 

Furthermore, if we let $S\subset \Ocal^\times/ R$ be the subspace of all isotropic vectors, namely
\[
S=\{u\in\Ocal^\times/ R\st (u, u)_n=1\},
\]
then we have the following.
\begin{enumerate}[label=(\roman*), leftmargin=*]
\item If $-1\in R$, then $S=\Ocal^\times/ R$, so $k=0$.\label{H1}
\item If $(-1, -1)_n=-1$, then $\Ocal^\times/ R=\la-1\ra\oplus S$. \label{H2}
\item If $(-1, -1)_n=1$ and $-1\notin R$, then there exists a rank 2 module $T$ with $-1\in T$ such that $\Ocal^\times/ R=T\oplus T^\perp$ and $T^\perp\subsetneq S$. (Here, we have $T^\perp\subsetneq S$ because $-1\in S$ but $-1\notin T^\perp$.) \label{H3}
\end{enumerate}
\end{Prop}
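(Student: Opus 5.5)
When $p^{\ell}=2$, the module $V:=\Ocal^\times/R$ is an $\mathbb{F}_2$-vector space of dimension $d=[F:\Q_2]$. The form $(-,-)_n$ takes values in $\{\pm1\}$ and is a non-degenerate symmetric bilinear form by Lemma \ref{Lem:OR_nonsing}. I would begin with the observation that $q(u):=(u,u)_n$ is additive on $V$: $(uv,uv)_n=(u,u)_n(v,v)_n(u,v)_n^2=(u,u)_n(v,v)_n$. So $q$ is a linear functional $V\to\{\pm1\}$, and $S=\ker q$ is a subspace of codimension $\le 1$. By non-degeneracy there is a unique $c\in V$ with $q(u)=(u,c)_n$ for all $u$. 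The key input is the classical identity $(u,u)_n=(u,-1)_n$, which follows from $(u,-u)_n=1$ and bilinearity. Hence $c$ is the image of $-1$ in $V$.

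The three cases then follow from this identification.
\begin{enumerate}[label=(\roman*)]
\item If $-1\in R$, then $c$ is trivial, so $q\equiv1$ and $S=V$. By Theorem \ref{T:decomposition_of_skew-symmetric-module}, or directly from the fact that an alternating non-degenerate form admits a symplectic basis, there are no rank 1 summands, so $k=0$.
\item If $(-1,-1)_n=-1$, then $q(c)=-1$, so $c\notin S$ and $\la c\ra$ is anisotropic, hence non-degenerate. Therefore $V=\la c\ra\oplus\la c\ra^\perp$. Since $\la c\ra^\perp=\{u: (u,-1)_n=1\}=S$, this gives $V=\la-1\ra\oplus S$, and $S$ is alternating, so $k=1$.
\item If $q(c)=1$ but $c\neq 0$, then $c\in S$ and $c\neq0$. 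By non-degeneracy choose $w$ with $(c,w)_n=-1$, and put $T=\la c,w\ra$. The Gram matrix of $T$ has off-diagonal entries $-1$, so its determinant is nonzero mod $2$ regardless of $q(w)$. Hence $T$ is non-degenerate and $V=T\oplus T^\perp$. Now $T^\perp\subseteq c^\perp=S$, and $c\notin T^\perp$ since $(c,w)_n\ne1$, while $c\in S$; so the inclusion $T^\perp\subsetneq S$ is strict. Moreover $T^\perp\subseteq S$ is alternating, and $T$ has at most two rank 1 summands (diagonalize it if $q(w)=-1$, or it is hyperbolic otherwise).
\end{enumerate}
In every case we get $k\le2$.

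I expect the main obstacle to be justifying the identity $(u,u)_n=(u,-1)_n$ in the quotient $V$. This requires checking that $-1$ defines a class compatible with the identification \eqref{E:n-th_Hilbert=p^ell-the_Hilbert}. It does: since $n=2m$ with $m$ odd, restricted to units the form is $(-,-)_2^B$ with $B$ odd, so it equals the quadratic Hilbert symbol, and $(u,-u)_2=1$ holds there. Everything else is standard linear algebra of symmetric bilinear forms over $\mathbb{F}_2$.
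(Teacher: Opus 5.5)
Your proposal is correct and follows essentially the paper's argument. The identity $(u,u)_n=(u,-1)_n$ drives all three cases, and your $T=\la -1,w\ra$ is the same module as the paper's $\{\pm1,\pm u_0\}$ (take $u_0=w$). You additionally supply the case-by-case count showing $k\le 2$, which the paper leaves to the reader. One small remark: in (iii), $q(w)=(w,-1)_n=(-1,w)_n^{-1}=-1$ automatically, so your ``hyperbolic otherwise'' branch never occurs.
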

\begin{proof}
That we can choose $k\leq 2$ is a general fact on the theory of skew-symmetric spaces over $\Z/ 2\Z$, and left to the reader. In what follows, we will show the second part of the proposition. For this purpose, recall that by \eqref{E:n-th_Hilbert=p^ell-the_Hilbert} we have
\[
(a, b)_n=(a, b)_2\quad\text{for all $a, b\in\Ocal^\times$}.
\]

Now, assume $-1\in R$. Then for all $u\in\Ocal^\times$, we have $(u, u)_2=(u, -1)_2=1$. Hence every vector in $\Ocal^\times/ R$ is isotropic. 

Next assume $(-1, -1)_2=-1$. Then the rank 1 module $\la-1\ra$ is non-degenerate. So we have the orthogonal sum decomposition $\Ocal^\times/ R=\la-1\ra\oplus \la-1\ra^\perp$. Since $(u, u)_2=(u, -1)_2$ for all $u\in\Ocal^\times$, we know that $u\in S$ if and only if $u\in \la-1\ra^\perp$.

Finally, assume $(-1, -1)_2=1$ and $-1\notin R$. Since $-1\notin R$, there exists $u_0\in\Ocal^\times$ such that $(-1, u_0)_2=-1$, so $(u_0, u_0)_2=-1$. One can then check that $(-u_0, -u_0)_2=-1$. Since $u_0\neq -u_0$, we have the rank 2 submodule $T:=\{\pm 1, \pm u_0\}$ generated by $u_0$ and $-u_0$. This space is non-degenerate and contains $-1$. We then have the orthogonal sum decomposition $\Ocal^\times/ R=T\oplus T^\perp$. Now, let $u\in T^\perp$. Since $-1\in T$, we have $(u, -1)_2=1$, so $(u, u)_2=1$, namely $u\in S$. 
\end{proof}

\begin{Rmk}
The three cases in Proposition \ref{P:orthogonal_decomposition_of_quadratic_Hilbert} form a partition. Case \ref{H1} occurs if $\sqrt{-1}\in F$; Case \ref{H2} occurs if $[F:\Q_{2}]$ is odd (so that necessarily $\sqrt{-1}\notin F$) because then $(-1, -1)_2=(-1, N_{F/\Q_2}(-1))_{\Q_2}=(-1, -1)_{\Q_2}=-1$, where $(-, -)_{\Q_2}$ is the quadratic Hilbert symbol for $\Q_2$; Case \ref{H3} occurs if $\sqrt{-1}\notin F$ and $[F:\Q_{2}]$ is even, because then $(-1, -1)_2=(-1, N_{F/\Q_2}(-1))_{\Q_2}=1$ and $-1=1-2\in 1+2\Ocal^\times$ so that $-1\notin R$ by \cite[Proposition A.7]{KT25}.
\end{Rmk}


\subsection{Reductive groups}


Let $\mathbb{G}$ be a connected split reductive linear algebraic group defined over $F$. Let $\mathbb{T}\subset \mathbb{G}$ be a maximal split torus with root datum $(X,\Phi,Y,\Phi^{\vee})$, where $X$ is the character lattice, $Y$ the cocharacter lattice, $\Phi\subset X$ the set of roots, and $\Phi^{\vee}\subset Y$ the set of coroots. Let $Y^{sc} = \mathrm{span}_{\Z}(\Phi^{\vee})$. For $x\in X$ and $t\in \Tbb$, we will sometimes write $t^{x}$ for $x(t)$. Let $N(\Tbb)$ be the normalizer of $\Tbb$ in $\Gbb$ and let $W$ be the Weyl group of $(\mathbb{G},\mathbb{T})$. For each $\alpha\in \Phi$ let $\mathbb{U}_{\alpha}$ be the associated root subgroup. Let $\mathbb{B}=\mathbb{B}^{+}\supset \Tbb$ be a Borel subgroup. Let $\mathbb{B}^{-}$ be the opposite Borel subgroup and let $\mathbb{U}^{\pm}$ be the unipotent radical of $\mathbb{B}^{\pm}$. Let $\Phi^{+}$ be the set of positive roots of $\Tbb$ relative to $\mathbb{B}$ and $\Phi^{-}=-\Phi^{+}$. Let $\Delta\subset \Phi^{+}$ be the set of simple roots associated to $\Bbbb$. We fix a Chevalley pinning with respect to $\mathbb{B}$, which for each root $\alpha\in\Phi$ gives a morphism of algebraic groups
\begin{equation*}
    x_{\alpha}:\Gbb_{a}\rightarrow \mathbb{U}_{\alpha}.
\end{equation*}
With this we can define some one-parameter subgroups for each $\alpha\in\Phi$,
\begin{align*}
    w_{\alpha}:\Gbb_{m}\rightarrow N(\Tbb),& \hspace{1cm} a\mapsto x_{\alpha}(a)x_{-\alpha}(-a^{-1})x_{\alpha}(a);\\
    h_{\alpha}:\Gbb_{m}\rightarrow \Tbb\phantom{N()},& \hspace{1cm} a\mapsto w_{\alpha}(a)w_{\alpha}(-1).
\end{align*}

Let $G := \Gbb(F)$ be the group of $F$ points of $\Gbb$. Similarly, if $\mathbb{H}$ is some algebraic subgroup of $\Gbb$ defined over $F$ we write $H:=\mathbb{H}(F)$ for its group of $F$-points. We abuse notation and continue to write $x_{\alpha}$, $w_{\alpha}$, $h_{\alpha}$ for the induced maps on $F$-points.

\subsection{Brylinski-Deligne covers}\label{SS:BD_Covers}

Let $\Gbbt$ be a Brylinski-Deligne (BD) cover of $\mathbb{G}$ incarnated by the datum $(D,\eta)$, where $D:Y\otimes Y\rightarrow \mathbb{Z}$ is a fair bisector (\cite[Definitions 2.2 and 2.11]{W14}) and $\eta:Y^{sc}\rightarrow F^{\times}$ is a group homomorphism. The BD cover $\Gbbt$ is an object in the category of sheaves of groups on the big Zariski site of $\mathrm{Spec}(F)$, and it fits into a central extension in this category
\begin{equation}\label{Eq:CExt}
    \mathbb{K}_{2}\hookrightarrow \overline{{\mathbb{G}}}\stackrel{\pr}{\twoheadrightarrow} \mathbb{G},
\end{equation}
where $\mathbb{K}_{2}$ is Quillen's $K_{2}$ group.
For details on BD covers see \cite{BD01,W16,W18,GG18}; for a concise discussion of the $(D,\eta)$ incarnation specifically, see \cite[Sections 2.6, 2.7]{GG18}.

Recall that the (not necessarily symmetric) bilinear form $D$ can be used to define a $W$-invariant quadratic form $Q:Y\rightarrow \Z$ defined by
\begin{equation*}
    Q(y): = D(y,y).
\end{equation*}
Let $B=B_{Q}:Y\times Y\rightarrow \Z$ be the symmetric bilinear form attached to $Q$. Note that for $y_{1},y_{2}\in Y$ we have 
\begin{equation*}
    B(y_{1},y_{2}) = D(y_{1},y_{2})+D(y_{2},y_{1}).
\end{equation*}

For any algebraic subgroup $\mathbb{H}\subset \Gbb$ we get a BD cover $\overline{\mathbb{H}}$ by pulling back the central extension \eqref{Eq:CExt} along the inclusion $\mathbb{H}\subset \Gbb$. From \cite[Proposition 3.2 and subsequent comment]{BD01}, any unipotent subgroup $\mathbb{H}\subset \Gbb$ admits a unique splitting $\mathbf{s}_{\mathbb{H}}:\mathbb{H}\rightarrow \overline{\mathbb{H}}$ of the central extension
\begin{equation*}
     \mathbb{K}_{2}\hookrightarrow \overline{{\mathbb{H}}}\stackrel{\pr}{\twoheadrightarrow} \mathbb{H}.
\end{equation*}
Thus for each $\alpha\in\Phi$ we get a unique splitting $\mathbf{s}_{\alpha}:=\mathbf{s}_{\mathbb{U}_{\alpha}}:\mathbb{U}_{\alpha}\rightarrow \overline{\mathbb{U}}_{\alpha}$. Let $\xt_{\alpha}:=\mathbf{s}_{\alpha} \circ x_{\alpha}$. With this we can define
\begin{align*}
    \wt_{\alpha}:\Gbb_{m}\rightarrow \overline{N(\Tbb)},& \hspace{1cm} a\mapsto \xt_{\alpha}(a)\xt_{-\alpha}(-a^{-1})\xt_{\alpha}(a);\\
    \htt_{\alpha}:\Gbb_{m}\rightarrow \Tbbt\phantom{N()},& \hspace{1cm} a\mapsto \wt_{\alpha}(a)\wt_{\alpha}(-1).
\end{align*}

Note that if $y\in Y$, then $D(y,-)\in X$. The incarnation of $\Gbbt$ using $(D,\eta)$ implies that $\Tbb$ admits a section 
\[
\mathbf{s}:\mathbb{T}\rightarrow \Tbbt
\]
such that  $\mathbf{s}(id_{\Tbb})=id_{\Tbbt}$, and for all $y\in Y$, $u\in \mathbb{G}_{m}$, and $t\in\Tbb$
\begin{equation}\label{Eq:TSectionMult}
    \mathbf{s}(y(u))\cdot \mathbf{s}(t) = \{u,t^{D(y,-)}\}\;\mathbf{s}(y(u)\cdot t).
\end{equation}
(See \cite[Section 0.N.5]{BD01} for $\{-,-\}$ notation.)

We note that for $\alpha\in\Phi$ the section $\mathbf{s}\circ h_{\alpha}$ need not equal $\htt_{\alpha}$. The discrepancy for simple roots is described in \cite[Proposition 2.5]{GG18}, which states that for $\alpha\in \Delta$ and $u\in \mathbb{G}_{m}$
\begin{equation}\label{Eqn:SimpleCoRootSection}
    \mathbf{s}(\alpha^{\vee}(u))\{\eta(\alpha^{\vee}),u\} = \htt_{\alpha}(u).
\end{equation}
For $\alpha\in\Phi$, by \cite[Propositions 11.3 and 11.9]{BD01} we have
\begin{equation}\label{Eq:TSectionWConj}
    \wt_{\alpha}(1)\cdot \mathbf{s}(y(u))\cdot \wt_{\alpha}(-1) = \mathbf{s}(y(u))\cdot \htt_{\alpha}(u^{-\la\alpha,y\ra}).
\end{equation}

The pair $(\Gbbt,n)$ has attached to it a root datum
\begin{equation*}
    (\Xt,\Phi_{Q,n},\Yt,\Phi_{Q,n}^{\vee}).
\end{equation*}
For details see \cite{Mc12,W14,W18}. We only need $\Yt$ and $\Phi_{Q,n}^{\vee}$, where
\begin{equation*}
    \Yt:=\{y\in Y\;|\;B_{Q}(y,-):Y\rightarrow n\Z\},
\end{equation*}
and
\[
\Phi_{Q, n}^{\vee}:=\{n_\alpha\alpha^\vee\st \alpha\in\Phi\},\qquad
n_{\alpha} := \frac{n}{\gcd(n,Q(\alpha^{\vee}))}.
\]
Let
\[
\Yt^{sc}:={\Span}_{\Z}\{\Phi_{Q,n}^{\vee}\}.
\]

Now we define the main $p$-adic groups that we will be working with. Let $\Gt$ be defined by first taking $F$-points of the sequence \eqref{Eq:CExt} and then pushing out by the $n$-th order Hilbert symbol $\mathbb{K}_{2}(F)\rightarrow \mu_{n}$ to get a topological central extension
\begin{equation}\label{Eq:CExt_F}
    \mu_{n}\hookrightarrow \Gt \stackrel{\pr}{\twoheadrightarrow} G.
\end{equation}
Similarly, for any algebraic subgroup $\mathbb{H}\subset \Gbb$ we can define $\overline{H}$ by this procedure. Alternatively, if $H\subset G$, we write $\overline{H}:=\pr^{-1}(H)$. When $H$ is the group of $F$-points of an algebraic subgroup of $\Gbb$, these two constructions agree. We note that $\overline{H}$ is an $l$-group in the sense of \cite{BZ76}. All representations of covering groups will be $\epsilon$-genuine, meaning $\mu_{n}$ acts through a fixed faithful character $\epsilon:\mu_{n}\rightarrow \C^{\times}$.

As in the case of $G$ we abuse notation and continue to write $\xt_{\alpha}$ for the map on $F$-points induced by $\xt_{\alpha}:\Gbb_{a}\rightarrow \overline{\mathbb{U}}_{\alpha}$. Similarly, for $\wt_{\alpha}$ and $\htt_{\alpha}$. The identities above in $\Tbbt$ or $\Gbbt$ descend to identities in $\Tt$ and $\Gt$, respectively. For example, the identity \eqref{Eq:TSectionMult} in $\Tbbt$ becomes in $\Tt$
\begin{equation}\label{Eq:TSectionMult_F}
    \mathbf{s}(y(u))\cdot \mathbf{s}(t) = (u,t^{D(y,-)})_{n}\cdot\mathbf{s}(y(u)\cdot t).
\end{equation}
If $t\in T$, we will sometimes write $\tilde{t}=\mathbf{s}(t)$. For $y\in Y$ and $u\in F^\times$, we write
\[
\yt(u)=\s(y(u)).
\]
We will write $\tbb\in \Tt$ for an arbitrary lift of $t$ to $\Tt$. We will only use the notation $\tbb$ when the choice of lift does not matter.

For any $y,z\in Y$ and $u,v\in F^{\times}$, in $\Tt$ we have the commutator identity
\begin{equation}\label{Comm_Id}
	[\ybb(u),\zbb(v)]:= \ybb(u)\zbb(v)\ybb(u)^{-1}\zbb(v)^{-1} = (u,v)^{B_{Q}(y,z)}\in \mu_{n}.
\end{equation}

\subsection{Distinguished characters}\label{SS:Dist_Char}

In this section we review the notion of distinguished characters of the center of the torus $Z(\Tt)$. For additional details see \cite[\S 6 and 7]{GG18}. We emphasize that this notion depends on a coroot structure and so involves $\Gbbt$, and not just $\Tbbt$. We will make this precise after some preliminaries.

From the lattice $\Yt$ we get a split torus $\Tbb_{Q,n}$ over $F$. From the inclusion $\Yt\hookrightarrow Y$ we get an isogeny of split tori 
\[
h:\Tbb_{Q,n}\rightarrow \Tbb.
\]
Now we may pull back the $\mathbb{K}_{2}$-extension of $\Tbb$ along $h$ to get an extension
\begin{equation*}
    \mathbb{K}_{2}\hookrightarrow \Tbbt_{Q,n}\twoheadrightarrow \Tbb_{Q,n}.
\end{equation*}
This extension canonically splits over $\ker h$. So, we have a map $\ker h\hookrightarrow \Tbbt_{Q,n}$.

Taking $F$-points and pushing out along the $n$-th order Hilbert symbol $\mathbb{K}_{2}(F)\rightarrow \mu_{n}$ gives a central extension of groups
\begin{equation}\label{Eq:CExt_TQn}
    \mu_{n}\hookrightarrow \Tt_{Q,n}\stackrel{\pr_{Q,n}}{\twoheadrightarrow} T_{Q,n},
\end{equation}
where $T_{Q,n}:=\Tbb_{Q,n}(F)$. We note that $\Tt_{Q,n}$ is an abelian group generated by $\mu_n$ and the elements of the form $\yh(u)$, where $y\in\Yt$ and $u\in F^{\times}$, subject to the relations:
\begin{itemize}
    \item $\mu_{n}\subset \Tt_{Q,n}$ is central;
    \item $\yh_{1}(u)\yh_{2}(u) = (u,u)_n^{D(y_{1},y_{2})}\widehat{(y_{1}+y_{2})}(u)$;
    \item $[\yh_{1}(u_{1}),\yh_{2}(u_{2})]=1$ for all $y_{j}\in \Yt$ and $u_{j}\in F^{\times}$;
    \item $\yh(u)\yh(v) = (u,v)_n^{Q(y)}\yh(uv)$.
\end{itemize}

From \cite{W09} we know that the map $h$ induces an exact sequence
\begin{equation}\label{E:T_Q_n_and_Z(T)}
    1\rightarrow [\ker h](F)\rightarrow \Tt_{Q,n}\rightarrow Z(\Tt)\rightarrow 1,
\end{equation}
where the map $\Tt_{Q,n}\rightarrow Z(\Tt)$ is given by $\yh(u)\mapsto \mathbf{s}(y(u))$.

The inclusion $nY\hookrightarrow \Yt$ defines an isogeny 
\[
g:\Tbb\rightarrow \Tbb_{Q,n}.
\]
By \cite[Lemma 3.1]{GG18}, we have $[\ker h](F)\subset g(T)$. We can construct a lift 
\[
\gt:T\rightarrow \Tt_{Q,n},\qquad y(u) \mapsto \widehat{ny}(u).
\]
With this we have $[\ker h](F)\subset \gt(T)\subset \Tt_{Q,n}$.

The inclusion $\Yt^{sc}\hookrightarrow \Yt$ induces an isogeny $T_{Q,n}^{sc}\rightarrow T_{Q,n}$. From \cite[pg 219-220]{GG18}, since $D$ is a fair bisector, there is a well-defined splitting 
\begin{equation}\label{E:s_eta_definition}
\mathbf{s}_{\eta}:T_{Q,n}^{sc}\longrightarrow \Tt_{Q,n},\quad y(u)\mapsto \yh(u)\cdot(\eta(y),u)_n.
\end{equation}

The above discussion is summarized in the following commutative diagram:
\begin{equation*}
\begin{tikzcd}
     &\Tt_{Q, n} \ar[d] & \\
    T_{Q, n}^{sc}\ar[ru, "{\s_\eta}"]\ar[r]&T_{Q, n}  & T \ar[l, "{g}"] \ar[lu, "{\gt}"']
\end{tikzcd}
\end{equation*}
Note that the bottom horizontal arrows are isogenies.

\begin{Def}[\cite{GG18} Section 6.4]\label{Def:DistGenChar}
    A genuine character $\chi:\Tt_{Q,n}\rightarrow \C^{\times}$ is {\it distinguished} if it is trivial on both $\mathbf{s}_{\eta}(T_{Q,n}^{sc})$ and $\gt(T)$.
\end{Def}

\begin{Rmk}The notion of distinguished characters emerges from the study of $L$-groups of BD covers. This connection is thoroughly explained in \cite[Section 6]{GG18}.
\end{Rmk}

Because $\gt(T)\supset [\ker h](F)$, a distinguished character $\chi$ of $\Tt_{Q,n}$ descends to a genuine character of $Z(\Tt)$ by \eqref{E:T_Q_n_and_Z(T)}. We will also refer to $\chi$ as a distinguished genuine character of $Z(\Tt)$. We note that distinguished characters of $Z(\Tt)$ are Weyl-invariant by \cite[Theorem 6.8]{GG18}.

Distinguished characters do not always exist; in the next proposition we recall \cite[Theorem 6.6]{GG18}, which gives a sufficient condition for their existence and a description of a torsor structure when they exist.

Let 
\[
J=J_n:= nY+\Yt^{sc}
\]
and let
\[
\eta_{n}:Y^{sc} \rightarrow F^{\times}/F^{\times n}
\]
be the composition of $\eta$ with the natural quotient map $F^{\times}\rightarrow F^{\times}/F^{\times n}$. Let $\Irr_{\epsilon}^{dist}(\Tt_{Q,n})$ be the set of distinguished  characters. We will write $\Irr_{\epsilon}^{dist}(Z(\Tt))$ for this set of distinguished characters viewed as characters of $Z(\Tt)$.

\begin{Prop}[\cite{GG18} Theorem 6.6]\label{Prop:GG_6.6}
    Assume that $\eta_{n}$ can be extended to a homomorphism of $Y$. Then:
    \begin{enumerate}[leftmargin={2em}]
        \item The set $\Irr_{\epsilon}^{dist}(\Tt_{Q,n})$ is nonempty.
        \item The set $\Irr_{\epsilon}^{dist}(\Tt_{Q,n})$ is a torsor under $\mathrm{Hom}((\Yt/J)\otimes F^{\times},\C^{\times})$.
    \end{enumerate}
\end{Prop}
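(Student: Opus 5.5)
The plan is to work directly with the abelian group $\Tt_{Q,n}$ and its presentation listed above. We want a genuine character $\chi$ of $\Tt_{Q,n}$ that is trivial on the two subgroups $\gt(T)$ and $\mathbf{s}_{\eta}(T^{sc}_{Q,n})$. Since $\Tt_{Q,n}$ is abelian, such a $\chi$ exists if and only if the subgroup $H:=\gt(T)\cdot\mathbf{s}_{\eta}(T^{sc}_{Q,n})$ meets $\mu_n$ trivially. In that case $\chi$ is a genuine character of $\Tt_{Q,n}/H$, which is an extension of a quotient of $T_{Q,n}$ by $\mu_n$, and any faithful $\epsilon$ on $\mu_n$ extends by Pontryagin duality for abelian groups.

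\textbf{Step 1: $H\cap\mu_n=1$.} First I will choose a homomorphism $\tilde\eta:Y\to F^\times/F^{\times n}$ extending $\eta_n$. I will then consider the map $\Phi:\Tt_{Q,n}\to\mu_n$ defined on generators by
\[
\yh(u)\mapsto (\tilde\eta(y),u)_n\cdot c(y,u),\qquad \zeta\mapsto\zeta,
\]
where $c$ is a correction term chosen so that the relations $\yh(u)\yh(v)=(u,v)^{Q(y)}\yh(uv)$ and $\yh_1(u)\yh_2(u)=(u,u)^{D(y_1,y_2)}\widehat{y_1+y_2}(u)$ hold. Rather than constructing $c$ by hand, I would take $\Phi$ to be the genuine character coming from a splitting: the splitting $\mathbf{s}_\eta$ is well defined because $D$ is a fair bisector, and I would check that the same formula with $\tilde\eta$ in place of $\eta$ makes sense on the whole image. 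Concretely, the map $\yh(u)\mapsto\yh(u)(\tilde\eta(y),u)_n$ makes sense only on $y\in\Yt^{sc}$, so I instead argue as follows.

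An element of $H\cap\mu_n$ has the form $\gt(t)\mathbf{s}_\eta(s)=\zeta$ with $g(t)\cdot(\text{image of }s)=1$ in $T_{Q,n}$. Using \cite[Lemma 3.1]{GG18}-type computations, I will write $t$ and $s$ via a basis of $Y$ adapted to $nY\subset J\subset\Yt$, and compute $\zeta$ explicitly with the relations. I expect $\zeta$ to equal a product of Hilbert symbols of the form $(\eta(y),u)_n$ with $y\in Y^{sc}\cap nY$-type combinations. The extendability of $\eta_n$ to $Y$ is exactly what forces these symbols to be $n$-th powers paired trivially, via bilinearity $(\tilde\eta(ny'),u)=(\tilde\eta(y'),u)^n=1$.

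\textbf{Step 2: torsor.} Two distinguished characters differ by a character of $\Tt_{Q,n}$ that is trivial on $\mu_n$, $\gt(T)$ and $\mathbf{s}_\eta(T^{sc}_{Q,n})$, that is, a character of $T_{Q,n}/(g(T)\cdot\text{im}(T^{sc}_{Q,n}))$. Since $T_{Q,n}=\Yt\otimes F^\times$ and the images are $nY\otimes F^\times$ and $\Yt^{sc}\otimes F^\times$, this quotient is $(\Yt/J)\otimes F^\times$ by right exactness of $\otimes F^\times$. Conversely, multiplying a distinguished character by such a character keeps it distinguished, and the action is simply transitive.

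The main obstacle is Step 1: showing $H\cap\mu_n=1$ requires a careful computation of the cocycle on overlaps of $nY$ and $\Yt^{sc}$, and this is where the extendability hypothesis and the fairness of $D$ enter.
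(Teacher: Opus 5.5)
Your framework is sound. Because $\Tt_{Q,n}$ is abelian, a distinguished character exists exactly when $H=\gt(T)\,\mathbf{s}_{\eta}(T^{sc}_{Q,n})$ meets $\mu_n$ trivially. Step~2 is also correct: right exactness gives $T_{Q,n}/T^{J}_{Q,n}\cong(\Yt/J)\otimes F^{\times}$, which is a finite group, so extending $\epsilon$ is harmless.

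The gap is that Step~1, which is the entire content of part (1), is never carried out. After abandoning $\Phi$ with its unspecified correction term $c$, you only state what you expect $\zeta$ to be, and that expectation is incomplete. The kernel of $T\times T^{sc}_{Q,n}\to T_{Q,n}$ does contain the antidiagonal image of $(\Yt^{sc}\cap nY)\otimes F^{\times}$. But $-\otimes F^{\times}$ is not left exact, so modulo this overlap the kernel still surjects onto $\ker(J\otimes F^{\times}\to\Yt\otimes F^{\times})\cong\operatorname{Tor}_1^{\Z}(\Yt/J,F^{\times})$. This group is finite but nonzero whenever $J\neq\Yt$, and you must show the lifts kill these elements too. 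Your remark $(\tilde\eta(ny'),u)_n=(\tilde\eta(y'),u)_n^{n}=1$ only handles the overlap.

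The missing construction is the one you discarded. Your claim that $\yh(u)\mapsto\yh(u)(\tilde\eta(y),u)_n$ makes sense only for $y\in\Yt^{sc}$ is not right. Since $\tilde\eta$ is defined on all of $Y$, put
\[
\psi:J\otimes F^{\times}\longrightarrow\Tt_{Q,n},\qquad x\otimes a\longmapsto \hat{x}(a)\,(\tilde\eta(x),a)_n .
\]
This map is biadditive:
\begin{itemize}
\item In $a$ it needs $Q(J)\subset n\Z$. This holds because $Q(n_\alpha\alpha^\vee)\in n\Z$, $Q(nY)\subset n^{2}\Z$ and $B(\Yt,Y)\subset n\Z$.
\item In $x$ it needs $(a,a)_n^{D(x_1,x_2)}=1$. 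This is automatic modulo $nY$, and on $\Yt^{sc}$ it is exactly what makes the given formula for $\mathbf{s}_\eta$ a homomorphism.
\end{itemize}
The map $\psi$ restricts to $\mathbf{s}_\eta$ on $\Yt^{sc}\otimes F^{\times}$ and to $\gt$ on $nY\otimes F^{\times}$, since $\tilde\eta(ny)\in F^{\times n}$. Hence $H=\psi(J\otimes F^{\times})$ and $H\cap\mu_n=\psi\bigl(\ker(J\otimes F^{\times}\to\Yt\otimes F^{\times})\bigr)$.

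Choose a basis $\{y_i\}$ of $\Yt$ such that $\{d_iy_i\}$ is a basis of $J$; only such a pair-adapted basis is needed. The kernel then consists of the elements $\sum_i (d_iy_i)\otimes\zeta_i$ with $\zeta_i^{d_i}=1$, and
\[
\psi\bigl((d_iy_i)\otimes\zeta_i\bigr)=\widehat{d_iy_i}(\zeta_i)\,(\tilde\eta(y_i),\zeta_i^{d_i})_n=\hat{y}_i(\zeta_i^{d_i})=1 .
\]
Here $\widehat{dy}(u)=\yh(u^{d})$ follows from the listed relations: both ways of expanding $\yh(u)^{d}$ produce the same factor $(u,u)_n^{\binom{d}{2}Q(y)}$, because $Q(y)=D(y,y)$. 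This torsion step again uses $\tilde\eta$ on $\Yt$, not just on $Y^{sc}$.

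With these two checks, $H\cap\mu_n=1$ and your argument goes through. The descended map $T^{J}_{Q,n}\to\Tt_{Q,n}$ is precisely the splitting $\mathbf{s}_J$ that the paper later extracts from a distinguished character.
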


Now we introduce some notation. For $L\subset Y$ a sublattice let
\[
T^L:=\text{image of the isogeny $L\otimes F^\times\to Y\otimes F^\times=T$},
\]
and if $L\subset Y_{Q, n}$ we let
\[
T_{Q, n}^L:=\text{image of the isogeny $L\otimes F^\times\to Y_{Q, n}\otimes F^\times=T_{Q, n}$}.
\]
We are particularly interested in the case where $L=J$ and $nY$.

\begin{Lem}\label{L:existence_of_s_J}
    Assume $\Tt_{Q, n}$ admits a distinguished character. Then there exist splittings $T_{Q, n}^J\to \Tt_{Q, n}$ and $T^J\to Z(\Tt)$ so that the diagram
    \[
    \begin{tikzcd}
     \Tt_{Q, n} \ar[r] & Z(\Tt)\\
    T_{Q, n}^{J}\ar[u]\ar[r]&T^J \ar[u]
    \end{tikzcd}
    \]
    commutes.
\end{Lem}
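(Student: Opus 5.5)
Given a distinguished character $\chi$ of $\Tt_{Q,n}$, I will construct the splitting over $T_{Q,n}^J$ as the kernel of $\chi$ intersected with the preimage of $T^J_{Q,n}$, and then transport it to $Z(\Tt)$ through the exact sequence \eqref{E:T_Q_n_and_Z(T)}. Put $\overline{T^J_{Q,n}}:=\pr_{Q,n}^{-1}(T^J_{Q,n})\subset\Tt_{Q,n}$. Since $J=nY+\Yt^{sc}$, the subgroup $T^J_{Q,n}$ is generated by the image of $g$ (coming from $nY$) and the image of $T^{sc}_{Q,n}\to T_{Q,n}$ (coming from $\Yt^{sc}$). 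Hence $\overline{T^J_{Q,n}}$ is generated by $\mu_n$, $\gt(T)$ and $\mathbf{s}_\eta(T^{sc}_{Q,n})$. The group $\Tt_{Q,n}$ is abelian, so this is a product of subgroups.

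\textbf{Step 1 (splitting over $T^J_{Q,n}$).} Let $K:=\ker(\chi)\cap\overline{T^J_{Q,n}}$. By Definition \ref{Def:DistGenChar}, $\chi$ is trivial on $\gt(T)$ and on $\mathbf{s}_\eta(T^{sc}_{Q,n})$. Since $\chi$ is genuine with $\epsilon$ faithful, $\chi|_{\mu_n}=\epsilon$ is injective, so $K\cap\mu_n=1$. Moreover $K\supset\gt(T)\cdot\mathbf{s}_\eta(T^{sc}_{Q,n})$, and this product surjects onto $T^J_{Q,n}$. Therefore $\pr_{Q,n}|_K:K\to T^J_{Q,n}$ is an isomorphism, and its inverse $\s_J$ is the desired splitting. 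Concretely, $\s_J(g(t)\,x)=\gt(t)\,\mathbf{s}_\eta(x)$, and this is well defined precisely because $\chi$ kills both pieces and is injective on $\mu_n$.

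\textbf{Step 2 (descending to $Z(\Tt)$).} Let $q:\Tt_{Q,n}\to Z(\Tt)$ be the surjection in \eqref{E:T_Q_n_and_Z(T)}, with kernel $[\ker h](F)$. I will check that $[\ker h](F)\subset K$: by \cite[Lemma 3.1]{GG18} as recalled above, $[\ker h](F)\subset\gt(T)\subset K$. Next, $q$ covers the map $h:T_{Q,n}\to T$ on $F$-points, and $h$ maps $T^J_{Q,n}$ onto $T^J$, since both are images of $J\otimes F^\times$ and the composite $J\otimes F^\times\to T_{Q,n}\to T$ is the natural one. Hence $q(K)\subset\pr^{-1}(T^J)\cap Z(\Tt)$, and $q(K)$ surjects onto $T^J$.

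The remaining point is that $q(K)\cap\mu_n=1$. Suppose $q(k)=\zeta\in\mu_n$. Since $q$ is the identity on $\mu_n$, we get $k\zeta^{-1}\in\ker q=[\ker h](F)\subset K$, so $\zeta\in K\cap\mu_n=1$. Thus $q|_K$ induces an isomorphism of $K/[\ker h](F)$ onto a subgroup of $Z(\Tt)$ mapping isomorphically to $T^J$, and its inverse is the splitting $T^J\to Z(\Tt)$. Commutativity of the diagram then holds by construction: the splitting over $T^J$ is defined as $q\circ\s_J$ composed with the identification of $T^J$ with $T^J_{Q,n}/(\ker h\cap T^J_{Q,n})$.

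\textbf{Main obstacle.} The most delicate point is the bookkeeping in Step 2 relating $T^J_{Q,n}$, $\ker h$ and $T^J$. The induced map $T^J_{Q,n}\to T^J$ is surjective but need not be injective, so I need $\ker h\cap T^J_{Q,n}$ to be covered inside $K$. This is exactly where the inclusion $[\ker h](F)\subset\gt(T)$ is used. I will also verify that $q$ restricted to $\mu_n$ is the identity, which follows from the description $\yh(u)\mapsto\mathbf{s}(y(u))$ together with the fact that $q$ is a map of central extensions.
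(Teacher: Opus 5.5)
Your proof is correct and follows the paper's argument. The splitting over $T_{Q,n}^J$ comes from $\ker\chi$, which contains $\s_\eta(T_{Q,n}^{sc})$ and $\gt(T)$ and meets $\mu_n$ trivially because $\chi$ is genuine; the splitting of $T^J$ is then obtained by descending through $\Tt_{Q,n}\to Z(\Tt)$. Your Step 2 spells out the descent that the paper only sketches, correctly using $[\ker h](F)\subset\gt(T)\subset\ker\chi$ to show that the image of $K$ in $Z(\Tt)$ meets $\mu_n$ trivially.
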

\begin{proof}
    Let $\chi$ be a distinguished character of $\Tt_{Q, n}$. Since $\chi$ is genuine, the map $\pr_{Q,n}|_{\ker\chi}$ is injective and thus defines an isomorphism onto its image. Thus, \eqref{Eq:CExt_TQn} splits over the subgroup $\pr_{Q,n}(\ker\chi)\subset T_{Q,n}$. So the extension splits over any subgroup of $\pr_{Q,n}(\ker\chi)$.

    By definition $\mathbf{s}_{\eta}(T_{Q,n}^{sc})$ and $\gt(T)$ are contained in the kernel of $\chi$. So, $T_{Q,n}^{J}\subset \pr_{Q,n}(\ker\chi)$, and thus the extension splits over $T_{Q,n}^{J}$.

    Since $\chi$ descends to $Z(\Tt)$, one can also ``descend the splitting" to a splitting of $T^J$.
\end{proof}

We denote the splittings of the lemma by
\begin{equation}\label{E:section_S_J}
\s_J: T_{Q, n}^J\longrightarrow \Tt_{Q,n}\qand \s_J:T^J\longrightarrow Z(\Tt),
\end{equation}
using the same symbol.

Note that the splitting $\s_J:T_{Q, n}^{J}\to \Tt_{Q, n}$ can be considered as an extension of both $\s_\eta$ and $\s_n$ in the following sense. For each $y\in \Yt^{sc}$ and $u\in F^\times$, we have
\[
\s_\eta(y(u))=\s_J(y(u)),
\]
where, strictly speaking, on the left-hand side $y(u)=y\otimes u\in Y_{Q,n}^{sc}\otimes F^\times=T_{Q,n}^{sc}$ and on the right-hand side $y(u)=y\otimes u\in \Yt\otimes F^\times=T_{Q,n}\supseteq T_{Q,n}^{J}$. Also, consider the splitting
\[
\mathbf{s}_{n}:T_{Q,n}^{\,nY}\rightarrow \Tt_{Q,n},\qquad ny(u)\mapsto \widehat{ny}(u).
\]
Note that the images of $\mathbf{s}_{n}$ and $\tilde{g}$ agree. Then for all $ny\in nY$ and $u\in F^\times$, we have
\[
\s_{n}(ny(u))=\s_J(ny(u)).
\]
Hence we say that the splitting $\s_J$ extends both of the splittings $\s_\eta$ and $\s_n$.

\subsection{Distinguished characters of $Z(\Tt_0)$.}

Analogously to distinguished characters of $Z(\Tt)$ (or equivalently of $\Tt_{Q, n}$), in this subsection, we define distinguished characters of $Z(\Tt_0)$, where we recall $\Tt_0$ is the preimage of the integral points 
\[
T_0=Y\otimes \Ocal^\times \subset T.
\]

Let
\[
\Tt_{Q,n}(\Ocal):=\{\epsilon\,\yh(u)\in \Tt_{Q,n}\st u\in\Ocal^\times, \;\epsilon\in\mu_n\}.
\]
We then have the commutative diagram
\begin{equation*}
\begin{tikzcd}
     &\Tt_{Q, n}(\Ocal) \ar[d] & \\
    T_{Q, n}^{sc}(\Ocal)\ar[ru, "{\s_\eta}"]\ar[r]&T_{Q, n}(\Ocal)  & T_0 \ar[l, "{g}"] \ar[lu, "{\gt}"']\rlap{\; ,}
\end{tikzcd}
\end{equation*}
where
\[
T_{Q, n}^{sc}(\Ocal)=Y_{Q,n}^{sc}\otimes\Ocal^\times\qand T_{Q,n}(\Ocal)=Y_{Q,n}\otimes\Ocal^\times.
\]
Note that
\[
T_{Q,n}^{sc}(\Ocal)\subset T_{Q,n}^{sc}\qand T_{Q,n}(\Ocal)\subset T_{Q,n}.
\]

Recall the image of the natural map $\Tt_{Q,n}\to \Tt$ is the center $Z(\Tt)$, so we have $\Tt_{Q, n}\twoheadrightarrow Z(\Tt)$, and the kernel of it is contained in $\gt(T)$, so a distinguished character of $\Tt_{Q, n}$ descends to $Z(\Tt)$. Hence a distinguished character is also viewed as a character of $Z(\Tt)$.

For the integral points, we certainly have the natural map $\Tt_{Q, n}(\Ocal)\to \Tt$ simply by restricting $\Tt_{Q,n}\to \Tt$. However, the image is not all of $Z(\Tt_0)$. So we have the natural map
\[
\Tt_{Q, n}(\Ocal)\longrightarrow Z(\Tt_0),
\]
which is not necessarily surjective.

With this said, we make the following definition.
\begin{Def}\label{Def:DistGenChar}
    A genuine character $\chi:Z(\Tt_0)\rightarrow\C^\times$ is {\it distinguished} if its pullback to $\Tt_{Q, n}(\Ocal)$ via the natural map $\Tt_{Q, n}(\Ocal)\longrightarrow Z(\Tt_0)$ is trivial on both $\mathbf{s}_{\eta}(T_{Q,n}^{sc}(\Ocal))$ and $\gt(T_0)$.
\end{Def}

Let us mention
\begin{Prop}\label{Prop:dist_cmpt_dist}
    If $Z(\Tt)$ admits a distinguished character, then so does $Z(\Tt_{0})$.
\end{Prop}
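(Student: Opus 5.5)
Starting from a distinguished character $\chi$ of $\Tt_{Q,n}$, I would produce a distinguished character of $Z(\Tt_0)$. First, restrict $\chi$ along the inclusion $\Tt_{Q,n}(\Ocal)\subset \Tt_{Q,n}$. Since $\mathbf{s}_\eta(T_{Q,n}^{sc}(\Ocal))\subset \mathbf{s}_\eta(T_{Q,n}^{sc})$ and $\gt(T_0)\subset\gt(T)$, the restriction $\chi_0:=\chi|_{\Tt_{Q,n}(\Ocal)}$ is a genuine character trivial on both subgroups appearing in Definition \ref{Def:DistGenChar}. It therefore remains to show that $\chi_0$ arises as the pullback of a genuine character of $Z(\Tt_0)$ along the natural map $\iota:\Tt_{Q,n}(\Ocal)\to Z(\Tt_0)$.

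This breaks into two steps.

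\emph{Step 1: $\chi_0$ factors through the image $\iota(\Tt_{Q,n}(\Ocal))$.} The map $\iota$ is the restriction of $\Tt_{Q,n}\to Z(\Tt)$, so by \eqref{E:T_Q_n_and_Z(T)} its kernel is $[\ker h](F)\cap \Tt_{Q,n}(\Ocal)$. By \cite[Lemma 3.1]{GG18} this kernel lies in $\gt(T)$, and $\chi$ is trivial there. Hence $\chi_0$ descends to a genuine character $\chi_1$ of the subgroup $Z_1:=\iota(\Tt_{Q,n}(\Ocal))\subset Z(\Tt_0)$. Genuineness is preserved because $\iota$ is the identity on $\mu_n$.

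\emph{Step 2: extend $\chi_1$ from $Z_1$ to $Z(\Tt_0)$.} The group $Z(\Tt_0)$ is abelian, and $Z_1$ is a subgroup of finite index: both contain $\s(nY\otimes\Ocal^\times)\mu_n$, which is of finite index in $\Tt_0$. Moreover $Z_1$ is open. Characters of abelian groups extend from subgroups, since $\C^\times$ is divisible. So pick any extension $\tilde\chi$ of $\chi_1$. Because $\tilde\chi$ agrees with $\chi_1$ on $\mu_n$, it is genuine. Because $Z_1$ is open, it is smooth. Its pullback along $\iota$ is $\chi_0$, which is trivial on $\mathbf{s}_\eta(T_{Q,n}^{sc}(\Ocal))$ and on $\gt(T_0)$. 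So $\tilde\chi$ is distinguished.

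The only delicate point is Step 1. There one must check that $\ker\iota$ really equals $\ker(\Tt_{Q,n}\to Z(\Tt))\cap\Tt_{Q,n}(\Ocal)$; this is immediate since $\iota$ is a restriction. One must also check that the kernel is contained in the part of $\gt(T)$ on which $\chi$ is trivial; this holds because $\chi$ is trivial on all of $\gt(T)$. Step 2 is routine. The extension is not unique, but existence is all that is claimed.
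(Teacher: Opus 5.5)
Your proposal is correct and follows the paper's proof: restrict a distinguished character of $\Tt_{Q,n}$ to $\Tt_{Q,n}(\Ocal)$, descend it to the image in $Z(\Tt_0)$ (the kernel lies in $[\ker h](F)\subset\gt(T)$, where $\chi$ is trivial), and extend arbitrarily to $Z(\Tt_0)$. Your checks on $\ker\iota$ and on the openness and finite index of the image, which give smoothness of the extension, fill in details the paper leaves implicit.
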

\begin{proof}
    Let $\chi:\Tt_{Q,n}\to\C^\times$ be a distinguished character of $\Tt_{Q,n}$. The restriction $\chi|_{\Tt_{Q, n}(\Ocal)}$ is trivial on  both $\mathbf{s}_{\eta}(T_{Q,n}^{sc}(\Ocal))$ and $\gt(T_0)$. Descend it to a character of the image of $\Tt_{Q, n}(\Ocal)\to Z(\Tt_0)$, and extend it to all of $Z(\Tt_0)$. The resulting character is a distinguished character of $Z(\Tt_0)$.
\end{proof}


\section{Main Theorem}\label{S:Main_Thm}


In this section we define the notion of pseudo-spherical representations and distinguished pseudo-spherical representations, and state our main theorem. These notions and the subsequent theorems all emerge from isolating the following subgroup of $T_{0}$. Let $T_{R}$ be the subgroup of $T_{0}$ generated by elements of the form $y(r)$, where $y\in Y$ and $r\in R$.

\subsection{Splitting of $T_R$}

To define the notion of pseudo-spherical representations, let
\[
\mathbf{s}_{R}:=\mathbf{s}|_{T_{R}}:T_{R}\longrightarrow \Tt_{0}
\]
be the restriction of the section $\mathbf{s}:T \rightarrow \Tt$ (Subsection \ref{SS:BD_Covers}). Then we have

\begin{Prop}\label{Prop:TRSplits}
The map $\s_R$ has the following properties:
\begin{enumerate}[label=$\bullet$, leftmargin=1.5em]
    \item It is a group homomorphism. 
    \item The image of $\s_R$ is central in $\Tt_0$, namely $\s_R(T_R)\subset Z(\Tt_0)$.
    \item If $T\subset G$ is a maximal torus and $\eta$ is valued in $\Ocal^{\times}F^{\times n}$, then $\mathbf{s}_{R}$ is $W$-equivariant. 
\end{enumerate}
\end{Prop}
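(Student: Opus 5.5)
Everything reduces to the cocycle formula \eqref{Eq:TSectionMult_F} and the commutator identity \eqref{Comm_Id}, together with the defining property of $R$: $(r,u)_n=1$ for all $r\in R$, $u\in\Ocal^\times$. First write $T=Y\otimes F^\times$ with a $\Z$-basis $y_1,\dots,y_k$ of $Y$. Then every $t\in T_R$ is a product $t=\prod_i y_i(r_i)$ with $r_i\in R$. Indeed, $y(r)=\prod_i y_i(r)^{c_i}=\prod_i y_i(r^{c_i})$ for $y=\sum c_iy_i$, and $R$ is a group. Similarly every $t\in T_0$ is $\prod_i y_i(u_i)$ with $u_i\in\Ocal^\times$. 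For any $t,t'\in T_0$ and $x\in X$, the value $t^{x}$ lies in $\Ocal^\times$, and if $t\in T_R$ then $t^{x}\in R$.

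\textbf{Homomorphism.} I will show that for $t\in T_R$ and $t'\in T_0$ (or vice versa) $\s(t)\s(t')=\s(tt')$. Apply \eqref{Eq:TSectionMult_F} iteratively: $\s(y(u))\s(t')=(u,t'^{D(y,-)})_n\,\s(y(u)t')$. First show by induction on the number of factors that $\s(\prod_i y_i(r_i))=\prod_i\s(y_i(r_i))$, using that at each step the correction symbol is $(r_i,(\text{something in }\Ocal^\times))_n=1$ since $r_i\in R$. Then for general $t=\prod y_i(r_i)\in T_R$ and $t'\in T$ with $t'\in T_0$, again every correction symbol has the form $(r_i,v)_n$ with $v\in\Ocal^\times$, hence is trivial. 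This gives $\s(t)\s(t')=\s(tt')$ for $t\in T_R$, $t'\in T_0$, and in particular the homomorphism property. One needs care that the expression of $t$ as a product is not unique, but $\s$ is a well-defined function on $T$, so this causes no problem.

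\textbf{Centrality.} For $t=\prod y_i(r_i)\in T_R$ and $t'=\prod y_j(u_j)\in T_0$, the commutator $[\s(t),\overline{t'}]$ is a product of commutators $[\ybb_i(r_i),\ybb_j(u_j)]=(r_i,u_j)_n^{B_Q(y_i,y_j)}$ by \eqref{Comm_Id} and bilinearity of commutators in the central extension. Each factor is trivial since $r_i\in R$. Hence $\s_R(T_R)$ commutes with $\Tt_0$.

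\textbf{$W$-equivariance.} This is the main obstacle. It suffices to treat simple reflections $w_\alpha$ with representative $\wt_\alpha(1)$, noting $\wt_\alpha(-1)=\wt_\alpha(1)^{-1}$ up to the standard identity. By \eqref{Eq:TSectionWConj},
\[
\wt_\alpha(1)\,\s(y(r))\,\wt_\alpha(1)^{-1}=\s(y(r))\,\htt_\alpha(r^{-\la\alpha,y\ra}).
\]
Then substitute \eqref{Eqn:SimpleCoRootSection}:
\[
\htt_\alpha(r^{-k})=\s(\alpha^\vee(r^{-k}))\,(\eta(\alpha^\vee),r^{-k})_n .
\]
Since $\eta(\alpha^\vee)\in\Ocal^\times F^{\times n}$ and $r^{-k}\in R$, the symbol is trivial: the $F^{\times n}$ part dies and the $\Ocal^\times$ part pairs trivially with $R$. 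Now $y(r)\in T_R$ and $\alpha^\vee(r^{-k})\in T_R$, so the homomorphism property gives
\[
\s(y(r))\,\s(\alpha^\vee(r^{-k}))=\s\bigl(y(r)\alpha^\vee(r)^{-\la\alpha,y\ra}\bigr)=\s\bigl(w_\alpha(y)(r)\bigr).
\]
Finally, extend from generators to all of $T_R$ using that both conjugation and $\s_R$ are homomorphisms. One must also check that conjugation by lifts of $w_\alpha$ is independent of the lift on the central subgroup $\s_R(T_R)$, which holds since different lifts differ by elements of $\Tt$ and $\s_R(T_R)$ commutes with $\Tt_0$. For lifts differing by non-integral elements of $\Tt$, I would instead use the fixed representatives $\wt_\alpha(1)$ to define the $W$-action, as the paper does.
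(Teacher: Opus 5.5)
Your proof is correct and follows the same route as the paper: \eqref{Eq:TSectionMult_F} with $r\in R$ gives the homomorphism property, \eqref{Comm_Id} gives centrality, and \eqref{Eq:TSectionWConj} combined with \eqref{Eqn:SimpleCoRootSection} and the hypothesis that $\eta$ is valued in $\Ocal^{\times}F^{\times n}$ gives $W$-equivariance; you supply details the paper leaves implicit. One small point: two lifts of $w_\alpha$ can differ by a non-integral element of $\Tt$, and such an element need not centralize $\s_R(T_R)$, so your first justification of lift-independence is not valid as stated; fixing the representatives $\wt_\alpha(1)$, as you end up doing, is the right convention.
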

\begin{proof}
    The group $T_{R}$ is generated by elements of the form $y(r)$, where $y\in Y$ and $r\in R$. Let $y\in Y$, $r\in R$ and $t\in T_{R}$. Then by \eqref{Eq:TSectionMult_F}
    \begin{equation*}
        \mathbf{s}_{R}(y(r))\mathbf{s}_{R}(t) = (r,t^{D(y,-)})_n\mathbf{s}_{R}(y(r)t)
        =\mathbf{s}_{R}
        (y(r)t),
    \end{equation*}
    where the second equality follows because $r\in R$ and $t^{D(y,-)}\in \Ocal^\times$. Thus $\mathbf{s}_{R}$ is a group homomorphism. The identity \eqref{Comm_Id} implies $\mathbf{s}_{R}(T_{R})\subset Z(\Tt_{0})$. The Weyl invariance follows from \eqref{Eqn:SimpleCoRootSection} and \eqref{Eq:TSectionWConj}, by using the assumption that $\eta$ is valued in $\Ocal^{\times}F^{\times n}$.
\end{proof}

Via this splitting, we often view $T_R$ as a subgroup of $\Tt$ and denote $\s_R(T_R)$ simply by $T_R$ when there is no danger of confusion.

\begin{Rmk}
    Though $T_R\subset Z(\Tt_{0})$ as above, in general $T_R\nsubset Z(\Tt)$ and hence $Z(\Tt_0)\nsubset Z(\Tt)$. This is because $\pr(Z(\Tt))=T^{Y_{Q, n}}$, and hence we can have $T_R\nsubset Z(\Tt)$ because in generally $Y_{Q, n}\neq Y$.
\end{Rmk}

\subsection{Definition of (distinguished) pseudo-spherical representations}
Now we can define two important families of representations of $\Tt_{0}$ as follows.

\begin{Def}\label{Def:PSRep}
    An irreducible representation $\pi$ of $\Tt$ (or $\Tt_{0}$) is called {\it pseudo-spherical} or {\it $\s_R$-pseudo-spherical} if $\pi$ is genuine and the subspace of $\mathbf{s}_R(T_{R})$-fixed vectors is nonzero.
\end{Def}

\begin{Rmk}
    For the above definition we use the splitting $\s_R:T_R\to\Tt$. But there are as many (continuous) splittings as the number of characters $T_R\to\mu_n$. Hence for each splitting $\s_R':T_R\to \Tt$, one can define the notion of $\s_R'$-pseudo-spherical representations. However in this paper, we only use the splitting $\s_R:T_R\to \Tt$.
\end{Rmk}

If $T$ is a split maximal torus in a connected reductive group $G$, we can identify a distinguished subset of pseudo-spherical representations as follows.

\begin{Def}\label{Def:Dist_PS_Rep}
    An irreducible representation $\pi$ of $\Tt$ (or $\Tt_{0}$) is called a {\it distinguished pseudo-spherical} representation if $\pi$ is a pseudo-spherical representation with a distinguished central character.
\end{Def}

Let 
\[
\Irr_{\epsilon}^{ps}(\Tt_{0})
\]
be the set of isomorphism classes of pseudo-spherical representations, and let 
\[
\Irr_{\epsilon}^{dps}(\Tt_{0}) \subset \Irr_{\epsilon}^{ps}(\Tt_{0})
\]
be the subset of distinguished pseudo-spherical representations.

\subsection{Main theorem}
To state our main theorem, we need to record the following two hypotheses about $\eta: Y^{sc}\to F^\times$, which guarantee existence of distinguished characters with good properties.

\begin{enumerate}[label=\textbf{H\arabic*)}, ref=H\arabic*)]
    \item\label{Hyp_eta_ext} $\eta_{n}$ can be extended to a homomorphism of $Y$. 
    \item\label{Hyp_eta_int} $\eta$ is valued in $\Ocal^{\times}F^{\times n}$.
\end{enumerate}

\quad

We can now state our main theorem.
\begin{Thm}\label{Thm:Main_Thm} $ $
    \begin{enumerate}[label=$\bullet$, leftmargin=*]
        \item Let $\tau$ be a pseudo-spherical representation of $\Tt_0$. Then
        \begin{equation*}
        \operatorname{dim}\tau = \Big|Y/ Y_{Q, p^{\ell}}\Big|^{\frac{[F:\Q_p]}{2}}=\Big|(Y/\Yt)\otimes (\Ocal^{\times}/R)\Big|^{1/2},
    \end{equation*}
and
    \begin{equation*}
        \Big|\Irr_{\epsilon}^{ps}(\Tt_{0})\Big|=\Big|Y_{Q,p^{\ell}}/ p^{\ell}Y\Big|^{[F:\Q_p]}=\Big|(\Yt/nY)\otimes (\Ocal^{\times}/R)\Big|.
    \end{equation*}
    
    \item Suppose that $T\subset G$ is a maximal split torus and assume \ref{Hyp_eta_ext} and \ref{Hyp_eta_int}. For $k\in \mathbb{Z}_{\geq 1}$, let $J_{k}=kY+Y^{sc}_{Q,k}$. Then
    \begin{equation*}
        \Big|\Irr_{\epsilon}^{dps}(\Tt_{0})\Big|=\Big|Y_{Q,p^{\ell}}/ J_{p^{\ell}}\Big|^{[F:\Q_p]}=\Big|(\Yt/J_{n})\otimes (\Ocal^{\times}/R)\Big|.
    \end{equation*}
    Moreover, each distinguished pseudo-spherical representation of $\Tt_0$ is $W$-invariant.
    \end{enumerate}
\end{Thm}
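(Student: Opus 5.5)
Pseudo-spherical representations of $\Tt_0$ are exactly the irreducible genuine representations of the finite group $H:=\Tt_0/\s_R(T_R)$, because $\s_R(T_R)$ is a central subgroup of $\Tt_0$ (Proposition \ref{Prop:TRSplits}) and a central subgroup acting trivially on a nonzero vector of an irreducible representation acts trivially everywhere. So the plan is to analyze $H$ as a Heisenberg-type group. It is a central extension
\[
\mu_n\hookrightarrow H\twoheadrightarrow T_0/T_R\cong Y\otimes(\Ocal^\times/R),
\]
and by \eqref{Comm_Id} its commutator pairing is
\[
\omega(y\otimes a,z\otimes b)=(a,b)_n^{B_Q(y,z)}.
\]
By \eqref{E:n-th_Hilbert=p^ell-the_Hilbert} this pairing is valued in $\mu_{p^\ell}$, and it is the tensor product of the form $B_Q \bmod p^\ell$ on $Y/p^\ell Y$ with the non-singular skew form on $\Ocal^\times/R$ of Lemma \ref{Lem:OR_nonsing}.

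\textbf{Step 1: the center of $H$.} The radical of $\omega$ should be $\mathrm{Rad}(B_Q \bmod p^\ell)\otimes(\Ocal^\times/R)$. Here $\Ocal^\times/R$ is free of rank $d=[F:\Q_p]$ over $\Z/p^\ell\Z$ with a non-singular form. Choosing a basis, pairing with $\Ocal^\times/R$ is a perfect duality, so $\sum_i y_i\otimes a_i$ lies in the radical iff each $y_i$ lies in the radical of $B_Q \bmod p^\ell$. That radical is $Y_{Q,p^\ell}/p^\ell Y$.

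Since the $p$-part is all that matters, I will check that $Y/Y_{Q,p^\ell}\cong Y/\Yt$ and $Y_{Q,p^\ell}/p^\ell Y\otimes(\Ocal^\times/R)\cong(\Yt/nY)\otimes(\Ocal^\times/R)$. The point is that $\Ocal^\times/R$ is $p^\ell$-torsion, so tensoring kills the prime-to-$p$ part, and one has $\Yt=Y_{Q,p^\ell}\cap Y_{Q,m}$. It follows that $Z(H)$ is the full preimage of $(Y_{Q,p^\ell}/p^\ell Y)\otimes(\Ocal^\times/R)$, and
\[
|H/Z(H)|=|Y/Y_{Q,p^\ell}|^{d}.
\]

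\textbf{Step 2: standard Heisenberg theory.} For a finite group $H$ with central $\mu_n$ and $\epsilon$ faithful, each genuine central character $\chi$ of $Z(H)$ has a unique irreducible representation with central character $\chi$. Its dimension is $|H/Z(H)|^{1/2}$, because $\omega$ induces a non-degenerate pairing on $H/Z(H)$ with values in $\mu_n$ (Stone–von Neumann). Taking the square root gives the dimension formula $|(Y/\Yt)\otimes(\Ocal^\times/R)|^{1/2}$.

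The count equals the number of genuine characters of the abelian group $Z(H)$, which is $|Z(H)/\mu_n|=|Y_{Q,p^\ell}/p^\ell Y|^d$. For this I need $\mu_n\cap \s_R(T_R)$ to be trivial, which holds because $\s_R$ is a section.

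\textbf{Step 3: distinguished representations.} This is the main obstacle. Distinguished representations correspond to genuine characters of $Z(H)$ that pull back to distinguished characters of $Z(\Tt_0)$ and are trivial on $T_R$. Existence comes from Proposition \ref{Prop:GG_6.6} under \ref{Hyp_eta_ext}, together with Proposition \ref{Prop:dist_cmpt_dist}.

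The first thing to verify is that triviality on $T_R$ is compatible with being distinguished. I would argue this using \ref{Hyp_eta_int}: $\s_\eta$ and $\gt$ restricted to $R$-points agree with $\s$ there, since $(\eta(y),r)_n=1$ when $\eta(y)\in\Ocal^\times F^{\times n}$ and $r\in R$. I would also use $\s_J$ from Lemma \ref{L:existence_of_s_J}.

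Next I will show that the image of $\Tt_{Q,n}(\Ocal)\,T_R$ in $Z(H)$, modulo $\mu_n$, together with the constraints, leaves the free part $(Y_{Q,p^\ell}/J_{p^\ell})\otimes(\Ocal^\times/R)$. Concretely, characters of $Z(H)/\mu_n$ trivial on the images of $J\otimes\Ocal^\times$ (via $\s_J$) form a torsor under the dual of $(Y_{Q,p^\ell}/J_{p^\ell})\otimes(\Ocal^\times/R)$. This needs care with the image of $\Tt_{Q,n}(\Ocal)\to Z(\Tt_0)$, which I would compare modulo $T_R$ with $Z(H)$, using that every $y\in Y_{Q,p^\ell}$ contributes central elements modulo $T_R$.

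Finally, $W$-invariance follows because $\s_R$ is $W$-equivariant (Proposition \ref{Prop:TRSplits}), a representation is determined by its central character, and distinguished central characters are $W$-invariant, in the style of \cite[Theorem 6.8]{GG18}.
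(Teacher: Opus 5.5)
Your proposal is correct. For the first bullet it takes a genuinely different and more economical route than the paper.

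The paper computes $Z(\Tt_0/T_R)$ in three stages:
\begin{enumerate}
\item It splits $\Ocal^\times/R$ orthogonally into rank-one and rank-two pieces (Proposition~\ref{Prop:Hilb_Decomp}). This relies on the Appendix's classification of non-singular skew forms over local rings, with separate treatment of rank-one pieces when $p^\ell=2$.
\item It writes $\Tt_0/T_R$ as a central product of the commuting subgroups $\Mt_i,\Nt_i$.
\item It computes each center by hand (Lemmas~\ref{L:isomorphism_of_Z(M)} and~\ref{L:isomorphism_of_Z(N)}).
\end{enumerate}
You instead identify the commutator form on $T_0/T_R=Y\otimes(\Ocal^\times/R)$ as $B_Q\otimes(-,-)_n$. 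You then use only that $(-,-)_n$ is non-singular on the free $\Z/p^\ell\Z$-module $\Ocal^\times/R$ (Lemma~\ref{Lem:OR_nonsing}). A dual basis shows at once that the radical is $(Y_{Q,p^\ell}/p^\ell Y)\otimes(\Ocal^\times/R)$.

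This makes the Appendix unnecessary for the theorem and treats $p=2$ uniformly. What the paper's route buys in exchange is an explicit Heisenberg-type factorization of $\Tt_0/T_R$, which is more structure than the counts require. Your Steps~2 and~3 are the paper's argument: Lemma~\ref{L:rep_of_2-step_unipotent_group}; counting genuine characters of $Z(H)$ trivial on the $\s_J$-image of $J_n\otimes\Ocal^\times$; and $W$-invariance via \cite[Theorem 6.8]{GG18} plus $W$-equivariance of $\s_R$.

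Three points to tighten.

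\textbf{The isomorphism in Step 1.} $Y/Y_{Q,p^\ell}\cong Y/\Yt$ is false when $Y_{Q,m}\neq Y$. What you need, and what your CRT remark actually proves, is $(Y/Y_{Q,p^\ell})\otimes(\Ocal^\times/R)\cong(Y/\Yt)\otimes(\Ocal^\times/R)$.

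\textbf{The torsor claim in Step 3.} Your check on $R$-points only shows that $y\otimes u\mapsto \s_J(y(u))T_R$ descends to a homomorphism $\varphi:(J_n/nY)\otimes(\Ocal^\times/R)\to Z(H)$. The torsor claim also needs $\operatorname{im}\varphi\cap\mu_n=1$. The $R$-point check by itself does not address this, since $T^J\cap T_R$ contains elements $(dy')(u)$ with $u^d\in R$ but $u\notin R$. The condition does follow once you note two things:
\begin{itemize}
\item $\pr\circ\varphi$ is the natural map $(J_n/nY)\otimes(\Ocal^\times/R)\to Y\otimes(\Ocal^\times/R)$.
\item This map is injective, because tensoring these $n$-torsion groups with $\Z/p^\ell\Z$ just extracts $p$-primary parts, which is exact.
\end{itemize}
Hypothesis \ref{Hyp_eta_ext} enters here through the existence of $\s_J$ as a homomorphism on all of $T_{Q,n}^{J}$. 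The paper instead verifies agreement of $\s_J$ and $\s_R$ on the whole overlap directly, in Lemma~\ref{L:compatibility_of_S_J_and_S_R}.

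\textbf{Existence.} Proposition~\ref{Prop:dist_cmpt_dist} gives distinguished characters of $Z(\Tt_0)$ that need not be trivial on $T_R$. Existence of distinguished pseudo-spherical ones comes out of the count itself, once $\operatorname{im}\varphi\cap\mu_n=1$ is established.
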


\begin{Rmks}
\quad
\begin{enumerate}[label=(\arabic*), leftmargin=*]
\item The statement of the theorem is a bit redundant in that the count in the first statement can be recovered from the second by considering $\Tbb=\Gbb$, in which case $Y^{sc}=0$ and $J_n=nY$.
\item Some restrictions on $\eta$ are necessary because distinguished characters do not exist in general. By Proposition \ref{Prop:GG_6.6},  the hypothesis \ref{Hyp_eta_ext} implies the existence of distinguished characters. 
\item Adding the second hypothesis \ref{Hyp_eta_int} essentially guarantees that the two splittings $\s_J$ and $\mathbf{s}_{R}$ agree on the overlap $T^J\cap T_R$, as we will see in Subsection \ref{SS:Compat_Split}.
\item Both \ref{Hyp_eta_ext} and \ref{Hyp_eta_int} are satisfied when $\eta$ is trivial and thus the set of covers satisfying both hypotheses is nonempty. Furthermore, we note that the representation theory of a BD cover with data $(D,\eta)$ can be related using $z$-extensions to the representation theory of a BD cover with data $(D^{\prime}, \one)$. For details, see \cite[Sections 2.9 and 11.4]{GG18}.
\end{enumerate}
\end{Rmks}
\quad

\subsection{Compatibility of $\s_J$ and $\s_R$}\label{SS:Compat_Split}
The hypotheses \ref{Hyp_eta_ext} and \ref{Hyp_eta_int} guarantee that the two splittings $\s_J$ and $\s_R$ agree on the overlap $T^J\cap T_R$.

\begin{Lem}\label{L:compatibility_of_S_J_and_S_R}
    Assume hypotheses \ref{Hyp_eta_ext} and \ref{Hyp_eta_int}. Then
    \[
    \s_J|_{T^J\cap T_R}=\s_R|_{T^J\cap T_R};
    \]
    namely the two splittings $\s_J$ and $\s_R$ agree on the overlap.
\end{Lem}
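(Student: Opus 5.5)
The idea is to reduce to a computation with explicit elements. Both maps are group homomorphisms splitting the same central extension $\mu_n\hookrightarrow\Tt\twoheadrightarrow T$ over $T^J\cap T_R$. Hence $\delta(t):=\s_J(t)\s_R(t)^{-1}$ is a character $T^J\cap T_R\to\mu_n$, and the lemma is equivalent to $\delta\equiv 1$. Here $\s_R$ is a homomorphism by Proposition \ref{Prop:TRSplits}, and $\s_J$ by Lemma \ref{L:existence_of_s_J}. Throughout, $\s_J$ is the splitting built from a distinguished character $\chi$ of $\Tt_{Q,n}$, which exists by \ref{Hyp_eta_ext} and Proposition \ref{Prop:GG_6.6}. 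Concretely, $\s_J(\pr(x))$ is the image in $Z(\Tt)$ of the unique element of $\ker\chi$ above $x$. Its values on the generators of $T^J_{Q,n}$ are known: on $\Yt^{sc}\otimes F^\times$ it agrees with $\s_\eta$, and on $nY\otimes F^\times$ it agrees with $\s_n$.

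\textbf{Step 1: describe $T^J\cap T_R$.} Fix $F^\times=\Ocal^\times\times\varpi^{\Z}$, so that $T=T_0\times(Y\otimes\varpi^{\Z})$ and $Y\otimes\varpi^{\Z}\cong Y$ is torsion-free. An element of $T^J$ is the image of some $\sum y_i\otimes u_i$ with $y_i\in J$. Its $\varpi$-component is the image of an element of $J\otimes\varpi^\Z$, and the map $J\to Y$ is injective. So $T^J\cap T_0$ is exactly the image of $J\otimes\Ocal^\times$.

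Next choose bases adapted by Smith normal form: a basis $e_1,\dots,e_r$ of $Y$ and integers $d_i$ with $\{d_ie_i\}$ a basis of $J$. Then $t=\prod_i e_i(u_i^{d_i})$ with $u_i\in\Ocal^\times$, and $t\in T_R$ if and only if $u_i^{d_i}\in R$ for every $i$. Since $\delta$ is a homomorphism, it suffices to check $\delta(e_i(v))=1$ for a single $i$ and $v=u^{d_i}\in R$, where $u\in\Ocal^\times$. The point $e_i(v)=(d_ie_i)(u)$ is reached through the $J$-generator $d_ie_i$ evaluated at the unit $u$, which need not lie in $R$.

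\textbf{Step 2: evaluate both splittings on such an element.} We have $\s_R(e_i(v))=\s(e_i(v))$ by definition. For $\s_J$, write $d_ie_i=ny+\sum_\alpha c_\alpha n_\alpha\alpha^\vee$ with $y\in Y$. Expand $\s_J((d_ie_i)(u))$ as $\s_n(ny(u))\prod_\alpha\s_\eta(n_\alpha\alpha^\vee(u))^{c_\alpha}$ inside $\Tt_{Q,n}$, using the stated relations of $\Tt_{Q,n}$. Then push to $Z(\Tt)$ via $\yh(u)\mapsto\s(y(u))$.

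This produces $\s(e_i(v))$ times two kinds of correction factors:
\begin{enumerate}[label=(\roman*)]
\item factors $(\eta(n_\alpha\alpha^\vee),u)_n^{c_\alpha}$ coming from $\s_\eta$;
\item factors $(u,u)_n^{D(\cdot,\cdot)}$ and $(u,u^{n})_n$-type factors coming from recombining the sum in $\Tt_{Q,n}$ and from $\yh(u)\yh(v)=(u,v)^{Q(y)}\yh(uv)$.
\end{enumerate}
All symbols have both arguments in $\Ocal^\times F^{\times n}$, the first kind by \ref{Hyp_eta_int}. By \eqref{E:n-th_Hilbert=p^ell-the_Hilbert} they are therefore values of the $\mu_{p^\ell}$-valued skew form on $\Ocal^\times/R$.

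\textbf{Step 3: show the corrections cancel.} This is the main obstacle, since $u$ itself need not lie in $R$. The plan is to regard the total correction as a function of $u\in\Ocal^\times/R$ that is $\Z/p^\ell$-linear in the first-kind factors and quadratic in the second-kind factors. The constraint $u^{d_i}\in R$ should then force it to vanish, via $(u^{d_i},w)_n=1$ for all $w\in\Ocal^\times$.

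For the first kind, the goal is to use \ref{Hyp_eta_ext} to extend $\eta_n$ to a homomorphism $\eta'$ on $Y$. The aim is to rewrite $\prod_\alpha\eta(n_\alpha\alpha^\vee)^{c_\alpha}$ as $\eta'(d_ie_i-ny)$ modulo $n$-th powers, and hence as an expression of the form $(\eta'(e_i)^{d_i}\cdots,u)_n=(\eta'(e_i)\cdots,u^{d_i})_n$, which is trivial. For the second kind, use $(u,u)_n=(u,-1)_n$ and the fact that $(u,-1)_n^{d_i\cdot(\ast)}$ is trivial once $u^{d_i}\in R$.

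If the bookkeeping in the Smith-basis expansion does not yield this cleanly, the fallback is more conceptual. Show that $\chi\circ\s_R$, pulled back along $T^J_{Q,n}\cap h^{-1}(T_R)$, is trivial. To do this, check it on the generating set $\{\widehat{z}(r): z\in J,\ r\in R\}$, where all symbols vanish because $r\in R$ and $\eta$ is integral up to $n$-th powers. Then control the finite quotient of the intersection by this subgroup using the Smith-normal-form description from Step 1.
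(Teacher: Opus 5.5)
Your architecture matches the paper's. You pick a Smith basis, use \ref{Hyp_eta_ext} to move the elementary divisor $d_i$ from $\eta$ onto $u$, and then use \ref{Hyp_eta_int} with $u^{d_i}\in R$ to kill the resulting symbol. Taking the Smith basis of $J$ and splitting $d_ie_i=ny+z$ even makes explicit the reduction the paper compresses into ``it suffices to check on $T_{\Yt^{sc}}\cap T_R$''.

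Step 3 does not close as written, and it fails exactly where the two hypotheses interact. You reach $(\eta'(e_i)\cdots,u^{d_i})_n$ and call it trivial. Your only stated tool is $(u^{d_i},w)_n=1$ for $w\in\Ocal^\times$, and that is not enough here:
\begin{itemize}
\item $R$ is a radical only against $\Ocal^\times$. Indeed $(r_0,\varpi)_n$ is a primitive $n$-th root of unity.
\item $\eta'(e_i)$ need not lie in $\Ocal^\times F^{\times n}$. Hypothesis \ref{Hyp_eta_int} constrains $\eta$ only on $Y^{sc}$, and $e_i\notin Y^{sc}$ in general.
\end{itemize}
You correctly recorded that the first-kind symbols have unit first arguments, but that property is lost the moment you rewrite through $\eta'$.

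The missing step is a valuation comparison. Write $\eta'(e_i)\equiv v\varpi^a$ modulo $F^{\times n}$ with $v\in\Ocal^\times$. Then \ref{Hyp_eta_ext} gives $\eta(z)\equiv v^{d_i}\varpi^{ad_i}$. Applying \ref{Hyp_eta_int} to $z=d_ie_i-ny\in\Yt^{sc}\subset Y^{sc}$ forces $n\mid ad_i$, so
\[
(\eta(z),u)_n=(v,u^{d_i})_n\,(\varpi^{ad_i},u)_n=1 .
\]
This is the detail the paper spells out after its displayed computation.

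Your handling of the second-kind factors is also unsupported, since nothing shows the exponents are multiples of $d_i$. It is, however, unnecessary: do not expand $z$ in the $n_\alpha\alpha^\vee$.
\begin{itemize}
\item Since $\s_J$ agrees with the homomorphism $\s_\eta$ on $\Yt^{sc}\otimes F^\times$, \eqref{E:s_eta_definition} gives $\s_J(z(u))=\hat z(u)\,(\eta(z),u)_n$ directly.
\item The only cross term is $\widehat{ny}(u)\,\hat z(u)=(u,u)_n^{nD(y,z)}\,\widehat{d_ie_i}(u)=\widehat{d_ie_i}(u)$.
\item Pushing to $\Tt$ gives exactly $\delta(e_i(u^{d_i}))=(\eta(z),u)_n$, which the valuation argument above kills.
\end{itemize}

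Finally, the fallback is not an independent route. Triviality on the image of $J\otimes R$ says nothing about the general elements $e_i(u^{d_i})$ with only $u^{d_i}\in R$, which is your ``main obstacle''. ``Controlling the finite quotient'' by the Step 1 description just returns you to the main computation.
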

\begin{proof}
    It suffices to check this on $T_{\Yt^{sc}}\cap T_{R}$. Using Smith normal form we can construct $\{y_{j}\}$ a basis for $Y$ such that $\{d_{j}y_{j}|d_{j}\neq 0\}$ is a basis for $\Yt^{sc}$. Suppose that $\prod_{j}(d_{j}y_{j})(u_{j})\in T_{\Yt^{sc}}\cap T_{R}$, where $u_{j}\in F^{\times}$. Since $\prod_{j}(d_{j}y_{j})(u_{j})\in T_{R}$, we have $u_{j}^{d_{j}}\in R$ for all $j$.

    Now by the definition of $\mathbf{s}_{\eta}$ and $\s_R=\s|_{T_R}$, we have
    \begin{align*}
        \mathbf{s}_{\eta}(\prod_{j}(d_{j}y_{j})(u_{j})) 
        &= \prod_{j}(\eta_{n}(d_{j}y_{j}),u_{j})_n\;\s_R((d_{j}y_{j})(u_{j}))\qquad(\text{definition of $\s_\eta$})\\
        &= \prod_{j}(\eta_{n}(y_{j}),u_{j}^{d_{j}})_n\;\s_R(y_{j}(u_{j}^{d_{j}}))\qquad(\text{hypothesis \ref{Hyp_eta_ext}})\\
        &=\prod_{j}\mathbf{s}(y_{j}(u_{j}^{d_{j}}))\qquad (\text{by $u_{j}^{d}\in R$ and hypothesis \ref{Hyp_eta_int}}) \\
        &=\s_R(\prod_{j}y_{j}(u_{j}^{d_{j}}))\\
        &=\s_R(\prod_{j}(d_{j}y_{j})(u_{j})),
    \end{align*}
where the third equality uses hypothesis \ref{Hyp_eta_int}, so that $(\eta_{n}(y_{j}),u_{j}^{d_{j}})_n=1$. We include the details of this third equality. Suppose that $\eta_{n}(y_{j})=v\varpi^{a}F^{\times n}$, where $v\in\Ocal^{\times}$ and $a\in\Z$. Then $\eta_{n}(d_{j}y_{j}) = v^{d_{j}}\varpi^{d_{j}a}F^{\times n}$. By \ref{Hyp_eta_int} we know that $n$ divides $d_{j}a$. So $(\eta_{n}(d_{j}y_{j}),u_{j})=(v^{d_{j}}\varpi^{d_{j}a},u_{j})=(\varpi^{d_{j}a},u_{j})(v,u_{j}^{^{d_{j}}})=1$, where the last equality follows because $n$ divides $d_{j}a$ and $u_{j}^{d_{j}}\in R$.
\end{proof}

\subsection{Some identities}

For the rest of this section, let us prove the equalities in Theorem \ref{Thm:Main_Thm}. Since we already know that $\Ocal^\times/ R$ is a free $\Z/ p^{\ell}\Z$ module of rank $[F:\Q_p]$, it suffices to show
\begin{Prop}\label{P:three_identities_mod_p^ell}
The following three isomorphisms hold:
    \begin{align*}
        Y_{Q,p^\ell}/ p^\ell Y&\cong (Y_{Q, n}/ nY)\otimes (\Z/ p^\ell\Z);\\
        Y/ Y_{Q,p^\ell}&\cong (Y/ Y_{Q, n})\otimes (\Z/ p^\ell\Z);\\
        Y_{Q,p^\ell}/ J_{p^\ell}&\cong (Y_{Q, n}/ J_n)\otimes (\Z/ p^\ell\Z),
    \end{align*}
    where the tensor product is over $\Z$.
\end{Prop}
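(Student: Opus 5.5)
The plan is to reduce all three isomorphisms to a single structural fact: modulo any power of $p$, only the $p$-part of $n$ sees the lattices $\Yt$, $nY$ and $\Yt^{sc}$. Write $n=p^{\ell}m$ with $\gcd(p,m)=1$. Tensoring with $\Z/p^{\ell}\Z$ is right exact, so for sublattices $L'\subset L\subset Y$ of full rank we have
\[
(L/L')\otimes(\Z/p^{\ell}\Z)\cong L/(L'+p^{\ell}L).
\]
For a finite abelian group, this is the quotient by $p^{\ell}$ times the group; equivalently, it is the $p$-primary part modulo $p^{\ell}$. It therefore suffices to compare the $p$-primary parts of the relevant finite groups with the groups on the left-hand sides, and to check that the latter are already killed by $p^{\ell}$.

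\textbf{Step 1: $p$-localization of the modified lattice.} I will show
\[
\Yt\otimes\Z_{(p)}=Y_{Q,p^{\ell}}\otimes\Z_{(p)}.
\]
The condition $B_Q(y,-)\in n\Z$ is equivalent to the two conditions $B_Q(y,-)\in p^{\ell}\Z$ and $B_Q(y,-)\in m\Z$. After inverting all primes other than $p$, the second condition becomes vacuous. Concretely,
\[
\Yt=Y_{Q,p^{\ell}}\cap Y_{Q,m},
\]
and $Y/Y_{Q,m}$ is killed by $m$, so it is prime to $p$. Hence $Y_{Q,p^{\ell}}/\Yt$ has order prime to $p$.

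\textbf{Step 2: the first two isomorphisms.}

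For the second isomorphism, note that $Y/Y_{Q,p^{\ell}}$ is killed by $p^{\ell}$, since $p^{\ell}Y\subset Y_{Q,p^{\ell}}$. By Step 1, $Y/Y_{Q,p^{\ell}}$ is the $p$-primary part of $Y/\Yt$. Tensoring $Y/\Yt$ with $\Z/p^{\ell}\Z$ extracts exactly the $p$-primary part modulo $p^{\ell}$, which is therefore $Y/Y_{Q,p^{\ell}}$.

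For the first isomorphism, $Y_{Q,p^{\ell}}/p^{\ell}Y$ is killed by $p^{\ell}$. Localize at $p$:
\[
\Yt\otimes\Z_{(p)}=Y_{Q,p^{\ell}}\otimes\Z_{(p)}\qand nY\otimes\Z_{(p)}=p^{\ell}Y\otimes\Z_{(p)},
\]
where the second equality holds because $m$ is a unit in $\Z_{(p)}$. Hence the $p$-part of $\Yt/nY$ is $Y_{Q,p^{\ell}}/p^{\ell}Y$, and tensoring with $\Z/p^{\ell}\Z$ gives the claim.

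\textbf{Step 3: the third isomorphism.} Here I also need
\[
J_n\otimes\Z_{(p)}=J_{p^{\ell}}\otimes\Z_{(p)}.
\]
Since $nY$ and $p^{\ell}Y$ agree after localization, it remains to check that $\Yt^{sc}$ and $Y^{sc}_{Q,p^{\ell}}$ agree after localization. The generators are $n_\alpha\alpha^\vee$ and $p_\alpha\alpha^\vee$, where
\[
n_\alpha=\frac{n}{\gcd(n,Q(\alpha^\vee))},\qquad p_\alpha=\frac{p^{\ell}}{\gcd(p^{\ell},Q(\alpha^\vee))}.
\]
Since $\gcd(n,Q(\alpha^\vee))$ factors as $\gcd(p^{\ell},Q(\alpha^\vee))\gcd(m,Q(\alpha^\vee))$, the quotient $n_\alpha/p_\alpha$ divides $m$ and is therefore a unit in $\Z_{(p)}$. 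So the two spans coincide after localization.

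Then $Y_{Q,p^{\ell}}/J_{p^{\ell}}$ is killed by $p^{\ell}$ and equals the $p$-part of $\Yt/J_n$, and the same tensoring argument as in Step 2 applies.

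\textbf{Main obstacle.} The only delicate points are making the identification ``$\otimes\,\Z/p^{\ell}\Z$ equals the $p$-part'' rigorous, and checking that each left-hand side is annihilated by $p^{\ell}$. For the latter I use $p^{\ell}Y\subset J_{p^{\ell}}\subset Y_{Q,p^{\ell}}$. Note that $Y_{Q,p^{\ell}}\supset p^{\ell}Y$, and $Y^{sc}_{Q,p^{\ell}}\subset Y_{Q,p^{\ell}}$ follows from $B_Q(p_\alpha\alpha^\vee,y)=p_\alpha Q(\alpha^\vee)\langle\alpha,y\rangle\in p^{\ell}\Z$. The cleanest route is to write each finite group as the direct sum of its $p$-part and its prime-to-$p$ part, and observe that tensoring with $\Z/p^{\ell}\Z$ kills the prime-to-$p$ summand.
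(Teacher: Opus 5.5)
Your proof is correct, but it takes a somewhat different route from the paper's.

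The paper uses the Chinese remainder isomorphism $f\colon Y/nY\xrightarrow{\sim}Y/p^{\ell}Y\oplus Y/mY$. It checks directly that $f$ maps $Y_{Q,n}/nY$ onto $Y_{Q,p^{\ell}}/p^{\ell}Y\oplus Y_{Q,m}/mY$, and $J_n/nY$ onto $J_{p^{\ell}}/p^{\ell}Y\oplus J_m/mY$. For $J$ this requires both inclusions; the reverse one goes through the explicit formula $f^{-1}(0,m_\alpha\alpha^\vee+mY)=b\gcd(p^{\ell},Q(\alpha^\vee))\,n_\alpha\alpha^\vee+nY$. This splits $Y_{Q,n}/nY$, $Y/Y_{Q,n}$ and $Y_{Q,n}/J_n$ completely into a $p^{\ell}$-component and an $m$-component, and tensoring with $\Z/p^{\ell}\Z$ then kills the $m$-components.

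You instead localize at $p$. From $Y_{Q,n}=Y_{Q,p^{\ell}}\cap Y_{Q,m}$ and $n_\alpha=p_\alpha m_\alpha$ with $m_\alpha\in\Z_{(p)}^\times$, the relevant lattices coincide after $\otimes\Z_{(p)}$. You finish by observing that $-\otimes\Z/p^{\ell}\Z$ applied to a finite abelian group returns its $p$-primary part modulo $p^{\ell}$.

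Your version handles all three identities uniformly and replaces the two-sided inclusion check for $J$ by a one-line unit argument. It also states explicitly that the left-hand sides are killed by $p^{\ell}$, via $p^{\ell}Y\subset J_{p^{\ell}}\subset Y_{Q,p^{\ell}}$, which the paper uses tacitly. The paper's version gives, as a by-product, the complementary $m$-parts and a concrete formula for the splitting.

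One thing worth saying explicitly in your write-up: every identification you make comes from localizing inclusions, so your isomorphisms are the ones induced by $y\mapsto y$. The paper relies on exactly this form (every class of $Y_{Q,p^{\ell}}/p^{\ell}Y$ has a representative in $Y_{Q,n}$) later, in Lemmas~\ref{L:isomorphism_of_Z(M)} and~\ref{L:isomorphism_of_Z(N)}.
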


\begin{proof}
    Recall $n=p^{\ell}m$ with $\gcd(p^{\ell}, m)=1$. We will construct the three isomorphisms:
    \[
    \begin{array}{rlrl}
     \Yt/nY\hspace{-.7em}&\cong Y_{Q,p^{\ell}}/p^{\ell}Y\oplus Y_{Q,m}/mY,& y+nY\hspace{-.7em}&\mapsto (y+p^{\ell}Y, y+mY);\\
     Y/\Yt\hspace{-.7em} &\cong Y/Y_{Q,p^{\ell}}\oplus Y/Y_{Q,m},& y+Y_{Q, n}\hspace{-.7em}&\mapsto (y+Y_{Q, p^{\ell}}, y+Y_{Q,m});\\
     \Yt/J_{n}\hspace{-.7em}&\cong Y_{Q,p^{\ell}}/J_{p^{\ell}}\oplus Y_{Q,m}/J_{m},& y+J_n\hspace{-.7em}&\mapsto (y+J_{p^\ell}, y+J_m),
    \end{array}
    \]
    where one can readily verify that the maps are all well-defined. Once these three isomorphisms are obtained, the proposition follows by tensoring with $\Z/p^{\ell}\Z$. Indeed, since $\gcd(p^{\ell}, m)=1$, there exist $a, b\in\Z$ such that $am+bp^\ell=1$. Hence for each $y\in Y$, we can write $y=amy+bp^{\ell}y$. Then after tensoring with $\Z/p^{\ell}\Z$, $bp^{\ell}y$ gets killed, and then since $amy\in mY$, the second factors on the right-hand sides all vanish.

    To prove the three isomorphism, consider the isomorphism
    \begin{equation*}
        f:Y/nY\xrightarrow{\sim} Y/p^{\ell}Y\oplus Y/mY,\quad y+nY\mapsto (y+p^{\ell}Y,y+mY),
    \end{equation*}
    given by the Chinese remainder theorem. Note that the inverse is explicitly given by
    \[
    (y_{1}+p^{\ell}Y,y_{2}+mY)\mapsto amy_{1}+bp^{\ell}y_{2}+nY,
    \]
    where $a$ and $b$ are as above.
    
    It is simple to check that $f(\Yt/nY) = Y_{Q,p^{\ell}}/p^{\ell}Y\oplus Y_{Q,m}/mY$, which proves the first isomorphism.

    For the second isomorphism, we can combine the isomorphisms $Y/nY\cong Y/p^{\ell}Y\oplus Y/mY$ and $\Yt/nY \cong Y_{Q,p^{\ell}}/p^{\ell}Y\oplus Y_{Q,m}/mY$.

    To show the third isomorphism, it suffices to show
    \begin{equation}\label{E:to_show_3rd_isom}
    f(J_{n}/nY) = J_{p^{\ell}}/p^{\ell}Y\oplus J_{m}/mY.
    \end{equation}
    Indeed, this identity gives us the isomorphism $J_{n}/nY\cong J_{p^{\ell}}/p^{\ell}Y\oplus J_{m}/mY$, and then the third isomorphism follows combining this isomorphism with the first one.
    
    Now, let us show \eqref{E:to_show_3rd_isom}. Recall that for $k\in\Z$ we have $J_{k} = Y_{Q,k}^{sc}+kY$, and $Y_{Q,k}^{sc}$ is generated by elements of the form $k_{\alpha}\alpha^{\vee}$, where $\alpha\in \Phi$ and $k_{\alpha}=\frac{k}{\gcd(k,Q(\alpha^{\vee}))}$. Note that because $\gcd(p^{\ell},m)=1$ we have that $n_{\alpha} = (p^{\ell})_{\alpha}m_{\alpha}$. From this it follows that $f(J_{n}/nY) \subset J_{p^{\ell}}/p^{\ell}Y\oplus J_{m}/mY$. For the reverse inclusion consider $m_{\alpha}\alpha^{\vee}\in Y_{Q,m}^{sc}$. Then we have 
    \begin{equation*}
        f^{-1}(0,m_{\alpha}\alpha^{\vee}+mY) = bp^{\ell}m_{\alpha}\alpha^{\vee}+nY=b\gcd(p^{\ell},Q(\alpha^{\vee}))n_{\alpha}\alpha^{\vee}+nY.
    \end{equation*}
    Thus $f^{-1}(J_{m}/mY)\subset J_{n}/nY$. Similarly, $f^{-1}(J_{p^{\ell}}/p^{\ell}Y)\subset J_{n}/nY$. The proof is complete.

\end{proof}


\section{Pseudo-spherical representations of $\Tt_0$}\label{S:Torus_Reps_I}


In this section, we give our proof of Theorem \ref{Thm:Main_Thm}.


\subsection{A general construction}


Let us recall the following general fact, which essentially says that the representations of covers of abelian groups are determined by their central characters.

\begin{Lem}\label{L:rep_of_2-step_unipotent_group}
Let $H$ be a compact abelian subgroup, and $\Ht$ an $n$-fold central extension of $H$, so that we have
\[
1\longrightarrow\mu_n\longrightarrow\Ht\longrightarrow H\longrightarrow 1.
\]
Let $\Irr_{\epsilon}\Ht$ be the set of (equivalence classes of) finite-dimensional irreducible $\epsilon$-genuine representations of $\Ht$ and $\Irr_{\epsilon}Z(\Ht)$ the set of $\epsilon$-genuine characters of the center $Z(\Ht)$. Then there is a bijection
\[
\Irr_{\epsilon}Z(\Ht)\xrightarrow{\;\sim\;}\Irr_{\epsilon}\Ht,\quad \chi\mapsto \pi(\chi),
\]
where $\pi(\chi)$ is the unique constituent of the isotypic representation $\Ind_{Z(\Ht)}^{\Ht}\chi$. Note that the central character of $\pi(\chi)$ is $\chi$.

Moreover, we have
\[
\Ind_{Z(\Ht)}^{\Ht}\chi=d\,\pi(\chi),
\]
where
\[
d=\dim\pi(\chi)=\left|\Ht/ Z(\Ht)\right|^{\frac{1}{2}}.
\]
\end{Lem}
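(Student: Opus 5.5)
The plan is to reduce to a finite Heisenberg-type group and run the standard Stone--von Neumann argument. Any finite-dimensional irreducible smooth representation of the compact group $\Ht$ factors through some quotient $\Ht/K$, where $K$ is an open normal subgroup. Since $H$ is abelian, $\Ht/Z(\Ht)$ is abelian: commutators land in $\mu_n$, which is central. Moreover, the commutator pairing
\[
\Ht/Z(\Ht)\times \Ht/Z(\Ht)\to\mu_n,\quad (x,y)\mapsto [x,y],
\]
is well defined, bimultiplicative, and nondegenerate by the definition of the center. Using continuity, I would first check that $Z(\Ht)$ is open (for each $x$ the map $y\mapsto[x,y]$ is a continuous character of $H$ with values in $\mu_n$, and $\Ht$ is compact), so $\Ht/Z(\Ht)$ is finite. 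Nondegeneracy of the pairing then identifies $\Ht/Z(\Ht)$ with its own Pontryagin dual (up to twisting by $\epsilon$), so its order is a perfect square $d^2$.

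Given a genuine character $\chi$ of $Z(\Ht)$, I would compute the character of $\Ind_{Z(\Ht)}^{\Ht}\chi$ by the Frobenius formula. Since $Z(\Ht)$ is central, the induced character vanishes off $Z(\Ht)$ and equals $d^2\chi$ on $Z(\Ht)$. The self-pairing is then
\[
\langle \Ind\chi,\Ind\chi\rangle=\frac{1}{|\Ht/Z|}\,d^4 = d^2,
\]
where I normalize Haar measure so that $Z(\Ht)$ has volume $1$. For the other side, let $\pi$ be any irreducible genuine representation whose central character is $\chi$. Its character is supported on $Z(\Ht)$: if $x\notin Z(\Ht)$, pick $y$ with $[y,x]=\zeta\neq1$; then $\operatorname{tr}\pi(x)=\operatorname{tr}\pi(yxy^{-1})=\epsilon(\zeta)\operatorname{tr}\pi(x)$, and $\epsilon(\zeta)\ne1$ because $\epsilon$ is faithful. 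Orthonormality of the character of $\pi$ then gives $(\dim\pi)^2\cdot|Z/Z|/|\Ht/Z|\cdot\ldots=1$, i.e. $(\dim\pi)^2=d^2$.

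By Frobenius reciprocity, every irreducible constituent of $\Ind\chi$ has central character $\chi$. Two such constituents have characters that are both equal to $\dim\pi\cdot\chi$ on $Z$ and $0$ elsewhere, so they are isomorphic. Hence $\Ind\chi=k\,\pi(\chi)$, and comparing dimensions gives $d^2=k\,d$, so $k=d$. The map $\chi\mapsto\pi(\chi)$ is injective because the central character recovers $\chi$. It is surjective because any genuine irreducible $\pi$ has some genuine central character $\chi$ (Schur's lemma) and embeds in $\Ind\chi$ by Frobenius reciprocity.

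I expect the only delicate point to be the finiteness of $\Ht/Z(\Ht)$ and the fact that it has square order, since the paper's $\Ht$ is a profinite $l$-group. I will handle this through the openness of the kernels of the continuous characters $[x,-]$, as above. In fact, the argument never needs the square-order fact separately: the dimension identity $(\dim\pi)^2=|\Ht/Z(\Ht)|$ already yields it.
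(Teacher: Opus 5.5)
Your argument is correct, and it gives a proof where the paper gives none: the paper disposes of this lemma by citing \cite[Proposition 2.2]{ABPTV}. You instead derive everything from Schur orthogonality on the compact group $\Ht$. The three ingredients are:
\begin{itemize}
\item the induced character of $\Ind_{Z(\Ht)}^{\Ht}\chi$ equals $|\Ht/Z(\Ht)|\,\chi$ on $Z(\Ht)$ and $0$ off it;
\item faithfulness of $\epsilon$ forces the character of any genuine irreducible $\pi$ to vanish off $Z(\Ht)$;
\item hence $\frac{1}{\operatorname{vol}(\Ht)}\int_{Z(\Ht)}(\dim\pi)^2|\chi|^2=(\dim\pi)^2/|\Ht/Z(\Ht)|=1$, which is the clean form of your garbled orthonormality line.
\end{itemize}
Equality of characters then gives isotypy and multiplicity $d$. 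The citation buys brevity. Your route is self-contained, works verbatim for any $n$ and any faithful $\epsilon$, and shows that faithfulness of $\epsilon$ is the only input beyond centrality of commutators. It also treats a compact rather than finite $\Ht$, as the statement allows. What it does not produce is an explicit model of $\pi(\chi)$, for instance as induced from a maximal abelian subgroup, but the lemma does not ask for one.

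Two small repairs are needed. First, nondegeneracy of the commutator pairing alone does not make $|\Ht/Z(\Ht)|$ a square, since every finite abelian group is isomorphic to its Pontryagin dual. Squareness comes from the pairing being alternating, $[x,x]=1$. As you say yourself, the dimension identity makes this unnecessary, so just set $d:=|\Ht/Z(\Ht)|^{1/2}$ a priori. Second, continuity of each individual map $y\mapsto[x,y]$ only shows that each centralizer is open. To get $Z(\Ht)$ open, use joint continuity of $(x,y)\mapsto[x,y]$ into the discrete group $\mu_n$. Then apply the tube lemma around $\Ht\times\{1\}$ to find a neighbourhood $V$ of $1$ with $[\Ht,V]=1$. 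In the paper's application $\Ht=\Tt_0/T_R$ is finite, so this second point is moot there.
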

\begin{proof}
See \cite[Proposition 2.2, p.706]{ABPTV}.
\end{proof}

Also if $W$ acts on $\Ht$ through automorphisms, so that $W$ also acts on $Z(\Ht)$, then for each $w\in W$ we have
\[
^{w}\left(\Ind_{Z(\Ht)}^{\Ht}\chi\right)\cong\Ind_{Z(\Ht)}^{\Ht}({^{w}\chi}).
\]
Hence the bijection of the lemma restricts to the bijection
\begin{equation}\label{E:bijection_Weyl_invariant_rep}
(\Irr_{\epsilon}Z(\Ht))^{W}\xrightarrow{\;\sim\;}(\Irr_{\epsilon}\Ht)^{W},
\end{equation}
where $(\Irr_{\epsilon}Z(\Ht))^{W}$ is the set of all $W$-invariant genuine characters of $Z(\Ht)$ and $(\Irr_{\epsilon}\Ht)^{W}$ is the set of (equivalence classes of) $W$-invariant genuine irreducible representations of $\Ht$.


\subsection{Decomposition of $\Tt_0/T_R$}\label{SS:Factor_T0}


By Lemma \ref{L:rep_of_2-step_unipotent_group}, the (finite-dimensional) irreducible $\epsilon$-genuine representation of $\Tt_0/ T_R$ are parameterized by $\epsilon$-genuine characters of $Z(\Tt_0/ T_R)$. Accordingly, we need a more concrete description of $Z(\Tt_0/ T_R)$. The first step is to decompose the group $\Tt_0/ T_R$ by using the decomposition of $\Ocal^\times/R$ of Proposition \ref{Prop:Hilb_Decomp}.

Recall that we have the identification
\[
Y\otimes F^\times\xrightarrow{\sim} T,\quad y\otimes t\mapsto y(t),
\]
where the tensor product is over $\Z$, and we have set
\[
T_0=Y\otimes\Ocal^\times\qand T_R=Y\otimes R.
\]
For each $y\in Y$ and $t\in F^\times$, we write
\[
\yt(t)=\s(y(t)).
\]
By \eqref{Eq:TSectionMult}, $\yt(u)$ for $u\in\Ocal^\times/ R$ is well-defined modulo $T_R$. 

Proposition \ref{Prop:Hilb_Decomp} provides a decomposition
\[
\Ocal^\times/ R=\la u_1\ra\oplus\cdots\oplus\la u_k\ra\;\oplus\;\la e_1, f_1\ra\oplus\cdots\oplus\la e_m, f_m\ra,
\]
where the rank 1 module $\la u_i\ra$ can appear only when $p^\ell=2$. For each $u_i\in\{u_1,\dots, u_k\}$, we define
\[
\Mt_i:=\la \epsilon\yt(u_i)\st y\in Y, \epsilon\in \mu_n\ra\subset \Tt_0/ T_R,
\]
and for each pair $\{e_i, f_i\}$, we define
\[
\Nt_i:=\la \epsilon\yt(e_i),\; \epsilon\yt(f_i)\st y\in Y, \epsilon\in\mu_n\ra\subset \Tt_0/ T_R,
\]
where we are identifying $T_R$ with $\s(T_R)\subset\Tt_0$.

Recall from \eqref{Comm_Id} the commutator identity in $\Tt$
\[
[\yt(u),\,\zt(v)]=(u, v)_n^{B_Q(y, z)}\in\mu_n
\]
for all $y, z\in Y$ and $u, v\in F^\times$. Since the orthogonal sum decomposition of $\Ocal^\times/ R$ is with respect to the Hilbert symbol $(-,-)_n$, we know that all the groups $\Mt_i$ and $\Nt_i$ commute with each other. Then we set
\[
\Mt:=\Mt_1\cdots\Mt_k\qand \Nt:=\Nt_1\cdots\Nt_{m}.
\]
Note that $\Mt$ is the group generated by all the elements of the form $\yt(u_i)$ viewed in $\Tt_0/ T_R$ and $\Nt$ is the one generated by all the elements of the form $\yt(e_i)$ and $\yt(f_i)$. Hence we have
\begin{equation}\label{E:exact_sequence_Tt}
1\longrightarrow T_{R}\stackrel{\mathbf{s}_{R}}{\longrightarrow}\Tt_{0}\longrightarrow \Mt\cdot \widetilde{N}\longrightarrow 1,
\end{equation}
so
\[
\Tt_0/T_R=\Mt\cdot\Nt=(\Mt_1\cdots\Mt_k)\cdot(\Nt_1\cdots\Nt_{m}).
\]


\subsection{On the center $Z(\Tt_0/ T_R)$}


With the above decomposition of $\Tt_0/ T_R$, we will give a more concrete description of $Z(\Tt_0/ T_R)$.

Let us first mention
\begin{Prop}\label{P:center_of_M_N}
\[
Z(\Mt\cdot\Nt)=Z(\Tt_0/ T_R)\cong Z(\Tt_0)/ T_R.
\]
\end{Prop}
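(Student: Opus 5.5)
The first equality is immediate from \eqref{E:exact_sequence_Tt}: by construction $\Tt_0/T_R=\Mt\cdot\Nt$ as groups, so their centers coincide. The content of the statement is therefore the isomorphism $Z(\Tt_0/T_R)\cong Z(\Tt_0)/T_R$. I will prove it by showing that the natural map $Z(\Tt_0)\to Z(\Tt_0/T_R)$ is surjective with kernel $T_R$.

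\textbf{Step 1: the map and its kernel.} By Proposition \ref{Prop:TRSplits}, $\s_R(T_R)$ is a central subgroup of $\Tt_0$. Hence $T_R\subset Z(\Tt_0)$, and the image of $Z(\Tt_0)$ in $\Tt_0/T_R$ is central. This gives an injection $Z(\Tt_0)/T_R\hookrightarrow Z(\Tt_0/T_R)$.

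\textbf{Step 2: surjectivity.} Let $\bar x\in Z(\Tt_0/T_R)$ with lift $x\in\Tt_0$. For every $g\in\Tt_0$ we then have $[x,g]\in \s_R(T_R)$. On the other hand, every commutator in $\Tt_0$ lies in $\mu_n$, by \eqref{Comm_Id} and bilinearity: $\Tt_0$ is a central extension of the abelian group $T_0$. So $[x,g]\in\mu_n\cap\s_R(T_R)$.

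Since $\s_R$ is a splitting of $\pr$, we have $\pr\circ\s_R=\mathrm{id}$. Any element of $\mu_n\cap\s_R(T_R)$ is $\s_R(t)$ with $t=\pr(\s_R(t))=1$, hence equals $\s_R(1)=1$. Thus $[x,g]=1$ for all $g$, so $x\in Z(\Tt_0)$.

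\textbf{Where care is needed.} The only real point is that commutators land in $\mu_n$, which meets $\s_R(T_R)$ trivially. This requires $\s_R$ to be a genuine homomorphic section, which Proposition \ref{Prop:TRSplits} provides. It also requires identifying $T_R$ with $\s_R(T_R)$ inside $\Tt_0$, as the paper does.

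I expect no substantial obstacle. One small check is that $\Mt\cdot\Nt$ is exactly all of $\Tt_0/T_R$, including the elements of $\mu_n$. This holds because $\Mt$ and $\Nt$ are generated by $\mu_n$ together with the $\yt(u)$ for $u$ running over a basis of $\Ocal^\times/R$. By \eqref{Eq:TSectionMult_F}, these elements generate $\Tt_0$ modulo $T_R$ and $\mu_n$.
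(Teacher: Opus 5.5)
Your proposal is correct and follows essentially the same route as the paper: the first equality comes from \eqref{E:exact_sequence_Tt}; the map $Z(\Tt_0)\to Z(\Tt_0/T_R)$ is surjective because commutators in $\Tt_0$ lie in $\mu_n\cap T_R=1$; and its kernel is $T_R$ because $T_R$ is central by Proposition~\ref{Prop:TRSplits}. Your use of $\pr\circ\s_R=\mathrm{id}$ to show $\mu_n\cap\s_R(T_R)=1$ fills in a step the paper simply asserts.
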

\begin{proof}
The first identity is immediate from \eqref{E:exact_sequence_Tt}. To show the second equality, consider the natural map
\[
Z(\Tt_0)\longrightarrow Z(\Tt_0/ T_R).
\]
We can see this map is a surjection. Indeed, if $z\, T_R\in Z(\Tt_0/ T_R)$, where $z\in \Tt_0$, then for all $t\in\Tt_0$, we have
\[
[\,z,\, t\,]\in T_R.
\]
At the same time, $[\,z,\, t\,]\in\mu_n$, and $\mu_n\cap T_R$ is trivial. Hence $[\,z, t\,]=1$, showing that $z\in Z(\Tt_0)$. Now, the kernel of this map is $T_R\cap Z(\Tt_0)$. But $T_R\subset Z(\Tt_0)$ by Proposition \ref{Prop:TRSplits}, so that we have $Z(\Tt_0)/ T_R=Z(\Tt_0/ T_R)$. 
\end{proof}


Next, let us describe the centers $Z(\Mt_i)$ and $Z(\Nt_i)$ concretely as follows.
\begin{Lem}\label{L:isomorphism_of_Z(M)}
We have a group isomorphism
\[
Y_{Q, p^{\ell}}/ p^{\ell}Y\cong (Y_{Q, n}/ nY)\otimes \la u_i\ra\longrightarrow \pr(Z(\Mt_i))
\]
given by
\[
y\mapsto \pr(\yt(u_i)),\quad (y\in Y_{Q, p^{\ell}}/ p^{\ell}Y),
\]
where $\la u_i\ra$ is the cyclic group generated by $u_i$ in $\Ocal^\times/ R$, so $\la u_i\ra\cong \Z/ p^\ell\Z$.
(Here, we recall that this case happens only when $p^{\ell}=2$.)
\end{Lem}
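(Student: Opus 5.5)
The plan is to compute the commutators inside $\Mt_i$ directly and read off the center. Every element of $\Mt_i$ can be written as $\epsilon\,\yt(u_i)$ with $y\in Y$ and $\epsilon\in\mu_n$, modulo $T_R$. This uses \eqref{Eq:TSectionMult_F}: $\yt(u_i)\zt(u_i)=(u_i,u_i)_n^{D(y,z)}\,\widetilde{(y+z)}(u_i)$, so products of such elements are again of this form up to $\mu_n$. So $\pr(\Mt_i)$ is the image of $Y\otimes\la u_i\ra$ in $T_0/T_R$. Since $\la u_i\ra\cong\Z/p^\ell\Z$ and $p^\ell=2$, this image is $Y/p^\ell Y\cong Y/2Y$, via $y\mapsto \pr(\yt(u_i))$. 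I would first check that this map is well defined and injective. Well defined: $\yt(u_i^2)$ lies in $T_R$ modulo $\mu_n$, because $u_i^2\in R$ since $\Ocal^\times/R$ is killed by $p^\ell$. Injective: $T_0/T_R=Y\otimes(\Ocal^\times/R)$ and $\la u_i\ra$ is a free direct summand, so $Y\otimes\la u_i\ra\hookrightarrow Y\otimes(\Ocal^\times/R)$.

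Next I determine which classes are central, using \eqref{Comm_Id}:
\[
[\yt(u_i),\zt(u_i)]=(u_i,u_i)_n^{B_Q(y,z)}=(-1)^{B_Q(y,z)},
\]
because $(u_i,u_i)_n=-1$ by Proposition \ref{Prop:Hilb_Decomp}(b). Multiplying by scalars in $\mu_n$ does not change commutators. Hence $\epsilon\yt(u_i)$ is central in $\Mt_i$ exactly when $B_Q(y,z)$ is even for all $z\in Y$, that is, when $y\in Y_{Q,2}=Y_{Q,p^\ell}$. Since $\mu_n\subset Z(\Mt_i)$, the projection $\pr(Z(\Mt_i))$ is the set of $\pr(\yt(u_i))$ with $y\in Y_{Q,p^\ell}$. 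Because $p^\ell Y\subset Y_{Q,p^\ell}$, the injective map of the previous step restricts to an isomorphism $Y_{Q,p^\ell}/p^\ell Y\to\pr(Z(\Mt_i))$. It is a group homomorphism, since $\pr(\yt(u))\pr(\zt(u))=\pr(\widetilde{(y+z)}(u))$.

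Finally, the identification $Y_{Q,p^\ell}/p^\ell Y\cong (Y_{Q,n}/nY)\otimes\la u_i\ra$ follows from the first isomorphism of Proposition \ref{P:three_identities_mod_p^ell} together with $\la u_i\ra\cong\Z/p^\ell\Z$.

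I expect the main obstacle to be the bookkeeping that $\Mt_i$ lives inside $\Tt_0/T_R$ rather than $\Tt_0$. One has to be careful that the center is computed in the subgroup $\Mt_i$ itself, and that commutators with representatives are unaffected by the choice of lift modulo $T_R$. The latter holds because $T_R$ is central by Proposition \ref{Prop:TRSplits}. One also has to check that $\pr$ (induced on $\Tt_0/T_R\to T_0/T_R$) has kernel exactly $\mu_n$ here, which uses $\mu_n\cap\s_R(T_R)=1$.
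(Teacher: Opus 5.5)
Your proposal is correct and takes essentially the same route as the paper. In both, the key step is the commutator $[\yt(u_i),\zt(u_i)]=(u_i,u_i)_n^{B_Q(y,z)}=(-1)^{B_Q(y,z)}$, which cuts out exactly $Y_{Q,2}=Y_{Q,p^\ell}$. The differences are minor. You obtain well-definedness and injectivity together from $T_0/T_R\cong Y\otimes(\Ocal^\times/R)$, using that $\la u_i\ra\cong\Z/2\Z$ is a direct summand, and you check centrality directly in both directions. The paper instead gets well-definedness from representatives in $Y_{Q,n}$, which lie in $Z(\Tt)$, and injectivity from a basis of $Y$ together with $(u_i^{a_j},u_i)_n=(-1)^{a_j}$.
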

\begin{proof}
In this proof, we omit the subscript $i$ for $\Mt_i$ and $u_i$, so we write $\Mt=\Mt_i$ and $u=u_i$. And we, of course, assume $p^\ell=2$.

Let $y\in Y_{Q, p^{\ell}}/ p^{\ell}Y$. Because of the first isomorphism of the lemma, which is  given by Proposition \ref{P:three_identities_mod_p^ell}, we may assume that $y$ is represented by an element in $Y_{Q, n}$. Then we already know that $\yt(u)\in Z(\Tt)$, and hence $\yt(u)\in Z(\Mt)$. So the map is well-defined. That this is a homomorphism is immediate.

To show this map is surjective, let $\yt(u)\in Z(\Mt)$. Then, for each $\zt(u)\in \Mt$, we must have
\[
(u, u)_n^{B_{Q}(y, z)}=1.
\]
Since $(u, u)_n=-1$, we must have $B_{Q}(y, z)\in 2\Z$ for all $z\in Y$. Thus $y\in Y_{Q,2}=Y_{Q, p^\ell}$. Hence the map is surjective. 

Now assume $y\in Y_{Q,p^{\ell}}$ is such that $\yt(u)=1$, so that $y(u)\in T_R$. Let $y_1,\dots,y_N\in Y$ be a $\Z$-basis of $Y$, and write $y=a_1y_1+\cdots+a_Ny_N$ for $a_i\in \Z$. Since $y_1,\dots,y_N$ are a $\Z$-basis, $y(u)\in T_R$ if and only if $u^{a_i}\in R$ for all $i=1,\dots, N$. So we must have $(u^{a_i}, u)_n=1$, namely $(u, u)_n^{a_i}=1$. But $(u, u)_n=-1$, which implies $a_i\in 2\Z=p^{\ell}\Z$. The lemma follows.
\end{proof}

\begin{Lem}\label{L:isomorphism_of_Z(N)}
We have a group isomorphism
\[
Y_{Q, p^{\ell}}/ p^{\ell}Y\times Y_{Q, p^{\ell}}/ p^{\ell}Y\cong (Y_{Q, n}/ nY)\otimes \la e_i, f_i\ra\longrightarrow \pr(Z(\Nt_i))
\]
given by
\[
(y_1, y_2)\mapsto \pr(\yt_1(e_i)\,\yt_2(f_i)),
\]
where $\la e_i, f_i\ra$ is the group generated by $e_i$ and $f_i$ in $\Ocal^\times/ R$, so $\la e_i, f_i\ra\cong \Z/ p^\ell\Z\times \Z/ p^\ell\Z$.
\end{Lem}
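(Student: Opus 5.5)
I will follow the same three-step pattern as the proof of Lemma \ref{L:isomorphism_of_Z(M)}: well-definedness, surjectivity via the commutator identity \eqref{Comm_Id}, and injectivity via a $\Z$-basis of $Y$. Throughout I drop the index $i$ and write $e=e_i$, $f=f_i$, $\Nt=\Nt_i$. The first isomorphism $Y_{Q,p^\ell}/p^\ell Y\times Y_{Q,p^\ell}/p^\ell Y\cong (Y_{Q,n}/nY)\otimes\la e,f\ra$ comes from Proposition \ref{P:three_identities_mod_p^ell} together with $\la e,f\ra\cong(\Z/p^\ell\Z)^2$. So I may represent $y_1,y_2$ by elements of $\Yt$.

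\textbf{Well-definedness and homomorphism.} For $y_1,y_2\in\Yt$ the elements $\yt_1(e),\yt_2(f)$ lie in $Z(\Tt)$, since $B_Q(y_j,-)\in n\Z$. Hence their product lies in $Z(\Nt)$. Changing $y_j$ by an element of $p^\ell Y$ changes the element only by an element of $T_R\mu_n$. Indeed, $e^{p^\ell}\in\Ocal^{\times p^\ell}\subset R$, and by \eqref{Eq:TSectionMult_F} a product of sections over $\Ocal^\times$ differs from the section of the product by a Hilbert symbol in $\mu_n$; taking $\pr$ kills this $\mu_n$ ambiguity. Additivity in $(y_1,y_2)$ follows from \eqref{Eq:TSectionMult_F} after applying $\pr$.

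\textbf{Surjectivity.} A general element of $\Nt$ is, modulo $\mu_n$ and $T_R$, of the form $\yt_1(e)\yt_2(f)$ with $y_1,y_2\in Y$. Suppose it is central. Commuting it with $\zt(e)$ and with $\zt(f)$ for arbitrary $z\in Y$, and using \eqref{Comm_Id}, gives
\[
(e,e)_n^{B_Q(y_1,z)}(f,e)_n^{B_Q(y_2,z)}=1,\qquad (e,f)_n^{B_Q(y_1,z)}(f,f)_n^{B_Q(y_2,z)}=1 .
\]
We have $(e,e)_n=1$ and $(e,f)_n=\zeta$. The first relation therefore gives $\zeta^{-B_Q(y_2,z)}=1$, so $B_Q(y_2,z)\in p^\ell\Z$ for all $z$, i.e.\ $y_2\in Y_{Q,p^\ell}$. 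Then $(f,f)_n^{B_Q(y_2,z)}=1$, because $(f,f)_n=\zeta^s$ is a $p^\ell$-th root of unity. The second relation then gives $\zeta^{B_Q(y_1,z)}=1$, so $y_1\in Y_{Q,p^\ell}$. This is the only place where $(f,f)_n\neq1$ when $p=2$ needs care, and the computation shows it is harmless.

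\textbf{Injectivity.} This is the step I expect to be the main obstacle. Suppose $y_1(e)y_2(f)\in T_R$ with $y_1,y_2\in Y_{Q,p^\ell}$; I must show $y_1,y_2\in p^\ell Y$. Fix a $\Z$-basis $\{w_k\}$ of $Y$ and write $y_j=\sum_k a_{jk}w_k$. Then $y_1(e)y_2(f)=\prod_k w_k(e^{a_{1k}}f^{a_{2k}})$. Since $T_R=Y\otimes R$, this lies in $T_R$ exactly when $e^{a_{1k}}f^{a_{2k}}\in R$ for every $k$. Now $e,f$ are part of a $\Z/p^\ell\Z$-basis of the free module $\Ocal^\times/R$, so each $a_{jk}\equiv0\pmod{p^\ell}$. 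Alternatively, pair with $e$ and $f$ under the non-singular form of Lemma \ref{Lem:OR_nonsing} to get the same congruences. Thus $y_1,y_2\in p^\ell Y$, and combined with the first isomorphism this finishes the proof.
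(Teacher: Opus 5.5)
Your proposal is correct and follows essentially the same route as the paper. The paper also takes representatives in $Y_{Q,n}$ to get well-definedness, and uses the commutator identity \eqref{Comm_Id} against $\zt(e)$ and then $\zt(f)$ to force $y_2$ and then $y_1$ into $Y_{Q,p^\ell}$. For injectivity it expands in a $\Z$-basis of $Y$ and pairs $e^{a_k}f^{b_k}\in R$ with $e$ and $f$, which is exactly your alternative argument; your primary argument, that $e$ and $f$ are linearly independent in $\Ocal^\times/R$, is equivalent to it.
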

\begin{proof}
In this proof, we omit the subscript $i$ for $N_i, e_i, f_i$, and simply write $N, e, f$.

Because of the first isomorphism of the lemma, which is  given by Proposition \ref{P:three_identities_mod_p^ell}, we may assume that $y_1$ and $y_2$ are represented by elements in $Y_{Q, n}$. Then we already know that $\yt_1(e), \yt_2(f)\in Z(\Tt)$, and hence $\yt_1(e), \yt_2(f)\in Z(\Nt)$. So the map is well-defined. That this is a homomorphism is immediate.

To show this map is surjective, note that each element in $Z(\Nt)$ is of the form $\epsilon\yt_1(e)\yt_2(f)$ for some $y_1, y_2\in Y$ and $\epsilon\in\mu_n$. We then need to show $y_1, y_2\in Y_{Q, p^\ell}$. Let $\zt(v)\in\Nt$ be arbitrary, where $v=e$ or $f$. Then
\begin{equation}\label{E:commutator_relation_in_proof}
[\yt_1(e)\yt_2(f), \zt(v)]=(e, v)_n^{B_Q(y_1, z)}(f, v)_n^{B_Q(y_2, z)}=1.
\end{equation}

Now, set $v=e$. Then $(e, e)_n=1$ and $(f, v)=\zeta^{-1}$, where $\zeta$ is a primitive $p^\ell$-th root of unity by Proposition \ref{Prop:Hilb_Decomp}. Hence the above equation \eqref{E:commutator_relation_in_proof} gives $B_Q(y_2,z)\in p^\ell\Z$. Since $z\in Y$ is arbitrary, we have $y_2\in Y_{Q, p^\ell}$. Then the equation \eqref{E:commutator_relation_in_proof} gives $(e, v)_n^{B_Q(y_1, z)}=1$ for all $v$ and $z$. By setting $v=f$, so that $(e,f)_n=\zeta$, we know that $B_Q(y_1, z)\in p^\ell\Z$, namely $y_1\in Y_{Q, p^\ell}$.

To show that the map is injective, assume that $z_{1},z_{2}\in Y_{Q,p^{\ell}}$ are such that $\zt_{1}(e)\zt_{2}(f)=1$, so that $z_{1}(e)z_{2}(f)\in T_R$. Let $y_1,\dots,y_N\in Y$ be a $\Z$-basis of $Y$, and write $z_{1}=a_1y_1+\cdots+a_Ny_N$ and $z_{2}=b_1y_1+\cdots+b_Ny_N$ for $a_i,b_{i}\in \Z$. Since $y_1,\dots,y_N$ are a $\Z$-basis, $z_{1}(e)z_{2}(f)\in T_R$ if and only if $e^{a_{i}}f^{b_{i}}\in T_R$ for all $i=1,\dots, N$. So we must have $(e^{a_i}f^{b_{i}}, e)_n=1$, which implies $(f, e)_n^{b_i}=1$. But $(e, f)_n=\zeta$. So we have $b_i\in p^{\ell}\Z$. We also have $(e^{a_i}f^{b_{i}}, f)_n=1$, which, given $b_i\in p^{\ell}\Z$, implies $(e^{a_i}, f)_n=1$. Thus $a_i\in p^{\ell}\Z$ and the injectivity follows.
\end{proof}

These two lemmas give us

\begin{Prop}\label{P:description_of_Z(T_0)_T_R}
    We have a group isomorphism
    \[
    (Y_{Q, n}/ nY)\otimes (\Ocal^\times/ R)\longrightarrow \pr(Z(\Tt_0)/ T_R)=\pr(Z(\Tt_0/ T_R))
    \]
    given by
    \[
    y\otimes u\mapsto \pr(\yt(u)).
    \]
\end{Prop}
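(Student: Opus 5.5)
The plan is to assemble the isomorphism from Lemmas \ref{L:isomorphism_of_Z(M)} and \ref{L:isomorphism_of_Z(N)} using the orthogonal decomposition of Proposition \ref{Prop:Hilb_Decomp}. First, tensoring that decomposition with $Y_{Q,n}/nY$ gives
\[
(Y_{Q,n}/nY)\otimes(\Ocal^\times/R)\cong \bigoplus_{i=1}^k (Y_{Q,n}/nY)\otimes\la u_i\ra\;\oplus\;\bigoplus_{j=1}^m (Y_{Q,n}/nY)\otimes\la e_j,f_j\ra .
\]
The map $y\otimes u\mapsto \pr(\yt(u))$ is well defined and a homomorphism. For $y\in Y_{Q,n}$ the element $\yt(u)$ lies in $Z(\Tt)\cap\Tt_0\subset Z(\Tt_0)$, and by \eqref{Eq:TSectionMult_F} it depends only on $u$ modulo $R$, up to $T_R$. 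Bilinearity after applying $\pr$ is clear since $\pr(\yt(u))=y(u)$. Moreover, $y\in nY$ gives $y(u)\in T_R$, because $\Ocal^{\times n}\subset R$. On each summand the map restricts to the isomorphism onto $\pr(Z(\Mt_i))$ or $\pr(Z(\Nt_j))$ given by the two lemmas.

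Next I will show that $Z(\Mt\cdot\Nt)=\prod_i Z(\Mt_i)\cdot\prod_j Z(\Nt_j)$. One inclusion holds because the factors pairwise commute. For the other, write a central element as $x=\prod_i x_i\prod_j x'_j$ with $x_i\in\Mt_i$ and $x'_j\in\Nt_j$. Commuting $x$ with an element of a single factor $\Mt_{i_0}$ or $\Nt_{j_0}$ involves only the corresponding component, since the commutator pairing \eqref{Comm_Id} is orthogonal across summands. Hence each component is central in its own factor. Combined with Proposition \ref{P:center_of_M_N}, this shows the map is surjective onto $\pr(Z(\Tt_0/T_R))=\pr(Z(\Tt_0)/T_R)$.

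The main obstacle is injectivity across different summands, since each lemma only gives injectivity on its own summand. I would handle this directly with the basis argument from the lemmas' proofs, but globally. Suppose $\prod_s y_s(w_s)\in T_R$, where $w_s$ runs through the chosen basis $u_i,e_j,f_j$ of $\Ocal^\times/R$. Expand each $y_s$ in a $\Z$-basis $y_1,\dots,y_N$ of $Y$ as $y_s=\sum_t a_{s,t}y_t$. The condition then becomes $\prod_s w_s^{a_{s,t}}\in R$ for every $t$. Since $\Ocal^\times/R$ is free over $\Z/p^\ell\Z$ on the $w_s$, this forces $a_{s,t}\in p^\ell\Z$, so each $y_s\in p^\ell Y$. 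By the first isomorphism of Proposition \ref{P:three_identities_mod_p^ell}, each $y_s$ therefore represents zero in $(Y_{Q,n}/nY)\otimes\la w_s\ra$. This gives injectivity of the total map, and with the surjectivity above it gives the stated isomorphism.
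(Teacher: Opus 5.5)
Your proposal is correct and follows the paper's proof. The paper likewise writes $Z(\Tt_0)/T_R=Z(\Mt\cdot\Nt)$ as the product of the $Z(\Mt_i)$ and $Z(\Nt_j)$ and then applies Lemmas \ref{L:isomorphism_of_Z(M)} and \ref{L:isomorphism_of_Z(N)} summand by summand via Proposition \ref{Prop:Hilb_Decomp}; your componentwise argument for the center of the product and your global basis argument for injectivity across summands spell out two steps the paper's brief proof leaves implicit.
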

\begin{proof}
Note that
\[
Z(\Tt_0)/ T_R=Z(\Tt_0/ T_R)=Z(\Mt\cdot \Nt)=Z(\Mt_1)\cdots Z(\Mt_k)\cdot Z(\Nt_1)\cdots Z(\Nt_m).
\]
Hence the result follows from the previous two lemmas together with the decomposition
\[
\Ocal^\times/ R=\la u_1\ra\oplus\cdots\oplus\la u_k\ra\oplus \la e_1,f_1\ra\oplus\cdots\oplus \la e_m, f_m\ra 
\]
of Proposition \ref{Prop:Hilb_Decomp}.
\end{proof}

Note that $Z(\Tt)\cap\Tt_0\subset Z(\Tt_0)$ but in general $Z(\Tt)\cap\Tt_0\neq Z(\Tt_0)$. But the following corollary says that the equality holds modulo $T_R$.
\begin{Cor}\label{C:centers_are_equal_mod_T_R}
    The inclusion $Z(\Tt)\cap\Tt_0\subset Z(\Tt_0)$ induces the isomorphism
    \[
    (Z(\Tt)\cap \Tt_0)/(Z(\Tt)\cap T_R)\cong Z(\Tt_0)/T_R.
    \]
    In particular, $Z(\Tt_{0})$ is generated by its subgroups $Z(\Tt)\cap T_{0}$ and $T_{R}$. 
\end{Cor}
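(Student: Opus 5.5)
The plan is to consider the composite homomorphism
\[
\phi: Z(\Tt)\cap\Tt_0\hookrightarrow Z(\Tt_0)\twoheadrightarrow Z(\Tt_0)/T_R.
\]
The corollary amounts to two claims: $\phi$ is surjective, and $\ker\phi=Z(\Tt)\cap T_R$. Here $T_R$ is identified with $\s_R(T_R)$, which lies in $Z(\Tt_0)$ by Proposition \ref{Prop:TRSplits}.

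The kernel is immediate: $\ker\phi=(Z(\Tt)\cap\Tt_0)\cap T_R=Z(\Tt)\cap T_R$, since $T_R\subset\Tt_0$.

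For surjectivity I will use Proposition \ref{P:description_of_Z(T_0)_T_R}. It says that every class in $\pr(Z(\Tt_0)/T_R)$ is a product of classes $\pr(\yt(u))$ with $y\in\Yt$ and $u\in\Ocal^\times$; here the representatives $y$ may be taken in $\Yt$ itself, as in the proofs of Lemmas \ref{L:isomorphism_of_Z(M)} and \ref{L:isomorphism_of_Z(N)}. For $y\in\Yt$ the commutator identity \eqref{Comm_Id} gives $[\yt(u),\zbb(v)]=(u,v)_n^{B_Q(y,z)}=1$, because $B_Q(y,z)\in n\Z$. Hence $\yt(u)\in Z(\Tt)\cap\Tt_0$. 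It follows that the image of $\phi$ contains a subgroup $H$ mapping onto $\pr(Z(\Tt_0)/T_R)$.

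It remains to handle the fibre of $\pr$, namely $\mu_n$. Since $\mu_n\subset Z(\Tt)\cap\Tt_0$, given $z\in Z(\Tt_0)$ I choose $h\in Z(\Tt)\cap\Tt_0$ with $\pr(h)\equiv\pr(z)$ modulo $\pr(T_R)$. Then $z\equiv\epsilon h$ modulo $T_R$ for some $\epsilon\in\mu_n$, and $\epsilon h\in Z(\Tt)\cap\Tt_0$. The step needing care here is that $\pr(z T_R)$ determines $zT_R$ up to $\mu_n$, which holds because $\mu_n\cap T_R=1$ inside $\Tt_0$, as already used in Proposition \ref{P:center_of_M_N}. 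This gives surjectivity and hence the isomorphism.

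The final assertion follows at once: every $z\in Z(\Tt_0)$ is a product $h\,t$ with $h\in Z(\Tt)\cap\Tt_0$ and $t\in T_R$. So $Z(\Tt_0)$ is generated by $Z(\Tt)\cap T_0$, meaning the part of $Z(\Tt)$ lying over $T_0$, together with $T_R$.

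The main obstacle is making sure the lifting in Proposition \ref{P:description_of_Z(T_0)_T_R} uses representatives $y\in\Yt$ rather than merely $y\in Y_{Q,p^\ell}$. If the representative were only in $Y_{Q,p^\ell}$, then $\yt(u)$ would only be central modulo the tame part. I would resolve this by using the first isomorphism of Proposition \ref{P:three_identities_mod_p^ell}: any $y\in Y_{Q,p^\ell}$ is congruent modulo $p^\ell Y$ to an element of $\Yt$. Changing $y$ by $p^\ell y'$ alters $\yt(u)$ by $\widetilde{y'}(u^{p^\ell})$ times a root of unity, and $u^{p^\ell}\in\Ocal^{\times n}\subset R$ after adjusting by $m$-th powers, which is harmless modulo $T_R$ and $\mu_n$.
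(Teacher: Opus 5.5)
Your proof is correct and follows the paper's route. Both arguments reduce surjectivity of $Z(\Tt)\cap\Tt_0\to Z(\Tt_0)/T_R$ to Proposition \ref{P:description_of_Z(T_0)_T_R}, whose generators $\yt(u)$ with $y\in\Yt$, $u\in\Ocal^\times$ are central in $\Tt$; the paper cites \cite[Proposition 4.1]{W09} for this, whereas you use \eqref{Comm_Id} directly and handle the $\mu_n$-fibre explicitly.

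Two small justifications should be tightened. First, lifting from $\pr(Z(\Tt_0)/T_R)$ to $Z(\Tt_0)/T_R$ up to $\mu_n$ only needs $\ker\pr=\mu_n$, not $\mu_n\cap T_R=1$. Second, in your last paragraph $u^{p^\ell}$ lies in $R$ because $(u,v)_n^{p^\ell}=1$ for units $u,v$, not because it is an $n$-th power.
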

\begin{proof}
    The inclusion $Z(\Tt)\cap\Tt_0\subset Z(\Tt_0)$ induces the inclusion
    \[
    (Z(\Tt)\cap \Tt_0)/(Z(\Tt)\cap T_R)\subset Z(\Tt_0)/T_R,
    \]
    noting $Z(\Tt)\cap \Tt_0\cap T_R=Z(\Tt)\cap T_R$. We will show this is an equality. The group $Z(\Tt)\cap \Tt_0$ is generated by elements of the form $\ybb(u)$ for $y\in Y_{Q, n}$ and $u\in\Ocal^\times$, by \cite[Proposition 4.1]{W09}. Since $Z(\Tt_0)/T_R$ is generated by elements of the form $\ybb(u)T_R$ for $y\in Y_{Q, n}$ and $u\in\Ocal^\times$ by the above proposition, we know that the above inclusion is an equality.
\end{proof}


\subsection{Pseudo-spherical representations of $\Tt_0$}


Once we know the size of the center $Z(\Tt_0/ T_R)$, one can readily compute the dimension and the number of pseudo-spherical representations of $\Tt_0$ as follows, proving the first part of Theorem \ref{Thm:Main_Thm}.
\begin{Prop}
The number of the pseudo-spherical representations of $\Tt_{0}$ is
\[
\Big|Y_{Q,p^{\ell}}/ p^{\ell}Y\Big|^{[F:\Q_p]}=\Big|(Y_{Q, n}/ nY)\otimes (\Ocal^\times/ R)\Big|.
\]
If $\tau$ is a pseudo-spherical representation of $\Tt_0$, then
\[
\dim\tau=\Big|Y/ Y_{Q, p^{\ell}}\Big|^{\frac{[F:\Q_p]}{2}}=\Big|(Y/ Y_{Q, n})\otimes(\Ocal^\times/ R)\Big|^{1/2}.
\]
\end{Prop}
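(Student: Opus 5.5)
The plan is to reduce everything to the finite group $\Ht:=\Tt_0/T_R$ and apply Lemma \ref{L:rep_of_2-step_unipotent_group}. Since $T_R\subset Z(\Tt_0)$ by Proposition \ref{Prop:TRSplits}, a pseudo-spherical representation $\tau$ of $\Tt_0$ has a nonzero $T_R$-fixed subspace that is $\Tt_0$-stable. By irreducibility, $T_R$ acts trivially, so $\tau$ factors through $\Ht$. Conversely, every irreducible genuine representation of $\Ht$ inflates to a pseudo-spherical representation of $\Tt_0$. Because $\mu_n\cap T_R=1$, $\Ht$ is an $n$-fold central extension of the finite (hence compact) abelian group $T_0/T_R$. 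Thus $\Irr_\epsilon^{ps}(\Tt_0)$ is in bijection with $\Irr_\epsilon\Ht$, and Lemma \ref{L:rep_of_2-step_unipotent_group} gives a bijection with $\epsilon$-genuine characters of $Z(\Ht)$, all representations having dimension $|\Ht/Z(\Ht)|^{1/2}$.

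For the count, $Z(\Ht)$ contains $\mu_n$, and $Z(\Ht)/\mu_n=\pr(Z(\Ht))$. The $\epsilon$-genuine characters of an abelian group $Z(\Ht)$ containing $\mu_n$ form a torsor under $\pr(Z(\Ht))^\vee$, so their number is $|\pr(Z(\Ht))|$. By Propositions \ref{P:center_of_M_N} and \ref{P:description_of_Z(T_0)_T_R}, $\pr(Z(\Ht))\cong(\Yt/nY)\otimes(\Ocal^\times/R)$. Using that $\Ocal^\times/R$ is free of rank $[F:\Q_p]$ over $\Z/p^\ell\Z$ together with Proposition \ref{P:three_identities_mod_p^ell}, this has order $|Y_{Q,p^\ell}/p^\ell Y|^{[F:\Q_p]}$.

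For the dimension, $|\Ht/Z(\Ht)|=|(T_0/T_R)/\pr(Z(\Ht))|$. We have $T_0/T_R=Y\otimes(\Ocal^\times/R)\cong(Y/p^\ell Y)\otimes(\Ocal^\times/R)$, of order $|Y/p^\ell Y|^{[F:\Q_p]}$. Here the identification $T_0/T_R\cong Y\otimes(\Ocal^\times/R)$ follows from right exactness of $Y\otimes-$, since $Y$ is free. We will check that the isomorphism of Proposition \ref{P:description_of_Z(T_0)_T_R} is compatible with the natural map $(\Yt/nY)\otimes(\Ocal^\times/R)\to Y\otimes(\Ocal^\times/R)$. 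Modulo $p^\ell$, this map becomes the inclusion $(Y_{Q,p^\ell}/p^\ell Y)^{[F:\Q_p]}\hookrightarrow(Y/p^\ell Y)^{[F:\Q_p]}$ coordinatewise in a basis of $\Ocal^\times/R$. Hence the quotient is $(Y/Y_{Q,p^\ell})^{[F:\Q_p]}$, and
\[
\dim\tau=|Y/Y_{Q,p^\ell}|^{[F:\Q_p]/2}=|(Y/\Yt)\otimes(\Ocal^\times/R)|^{1/2},
\]
where the last equality uses the second isomorphism of Proposition \ref{P:three_identities_mod_p^ell}.

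The main obstacle is this last index computation. It requires confirming that $\pr(Z(\Ht))$ sits inside $T_0/T_R$ as the image of $Y_{Q,p^\ell}\otimes(\Ocal^\times/R)$, rather than merely being abstractly isomorphic to it. I would extract this from the explicit formula $y\otimes u\mapsto\pr(\yt(u))=y(u)T_R$ in Proposition \ref{P:description_of_Z(T_0)_T_R}, working coordinatewise in the basis $u_i,e_i,f_i$ from Proposition \ref{Prop:Hilb_Decomp}.
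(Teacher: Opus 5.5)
Your proposal is correct and follows the paper's own proof. You reduce to $\Tt_0/T_R$, apply Lemma~\ref{L:rep_of_2-step_unipotent_group}, and read off the count and the dimension from Proposition~\ref{P:description_of_Z(T_0)_T_R} together with Proposition~\ref{P:three_identities_mod_p^ell}. The ``main obstacle'' you flag is not actually needed: because $p^\ell Y\subset Y_{Q,p^\ell}\subset Y$, orders alone give $|Y/p^\ell Y|^{[F:\Q_p]}/|Y_{Q,p^\ell}/p^\ell Y|^{[F:\Q_p]}=|Y/Y_{Q,p^\ell}|^{[F:\Q_p]}$, which is exactly how the paper concludes, without identifying how $\pr(Z(\Tt_0/T_R))$ sits inside $T_0/T_R$.
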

\begin{proof}
The number of pseudo-spherical representations of $\Tt_0$ is the number of $\epsilon$-genuine characters of the center $Z(\Tt_0/T_R)$ and hence the cardinality of the group $\pr(Z(\Tt_0/ T_R))$ by Lemma \ref{L:rep_of_2-step_unipotent_group}. Hence the proposition follows from Proposition \ref{P:description_of_Z(T_0)_T_R}.

Also by Lemma \ref{L:rep_of_2-step_unipotent_group},
\[
\dim \tau
=\Big|(\Tt_0/ T_R)/ Z(\Tt_0/ T_R)\Big|^{1/2}
\]
We know $Y\otimes(\Ocal^\times/ R)\cong \pr(\Tt_0/ T_R)$, and also
$|Y\otimes(\Ocal^\times/ R)|=|Y/ p^{\ell}Y|^{[F:\Q_p]}$ because $\Ocal^\times/ R\cong (\Z/ p^\ell\Z)^{[F:\Q_p]}$ and $Y\otimes \Z/ p^\ell\Z=Y/ p^{\ell} Y$. Also we already know $|\pr(Z(\Tt_0/ T_R)|=|Y_{Q, p^{\ell}}/ p^{\ell}Y|^{[F:\Q_p]}$. Hence
\[
(\dim\tau)^2=\frac{\big|Y/ p^{\ell}Y\big|^{[F:\Q_p]}}{\big|Y_{Q, p^{\ell}}/ p^{\ell}Y\big|^{[F:\Q_p]}}=\Big|Y/ Y_{Q, p^{\ell}}\Big|^{[F:\Q_p]}.
\]
The proposition follows.
\end{proof}


\subsection{Distinguished pseudo-spherical representations of $\Tt_0$}


Let us count the number of distinguished pseudo-spherical representations of $\Tt_0$. Recall that a pseudo-spherical representation $\tau$ of $\Tt_0$ is distinguished if its central character is distinguished in the sense that its pullback via $\Tt_{Q, n}(\Ocal)\to Z(\Tt_0)$ is trivial on both $\s_\eta(T_{Q, n}^{sc}(\Ocal))$ and $\gt(T_0)$.

Now, the isomorphism of Proposition \ref{P:description_of_Z(T_0)_T_R} does not lift to $Z(\Tt_0)/T_R$. By restricting to the domain $(Y_{Q, n}/nY)\otimes(\Ocal^\times/R)$ to a smaller group, however, it does lift, assuming \ref{Hyp_eta_ext} and \ref{Hyp_eta_int} as follows.

\begin{Prop}\label{P:splitting_S_J_center}
Assume \ref{Hyp_eta_ext} and \ref{Hyp_eta_int}. We have an injective homomorphism
\[
\varphi:(J_n/ nY)\otimes (\Ocal^\times/R)\longrightarrow Z(\Tt_0)/ T_R, \qquad y\otimes u\mapsto \s_J(y(u)),
\]
where recall $\s_J:T_{Q, n}^J\to\Tt_{Q, n}$ is as given in \eqref{E:section_S_J}.

Further, the image of $\varphi$ is the group generated by images of $\s_\eta(T_{Q, n}^{sc}(\Ocal))$ and $\gt(T_0)$ under the map $\Tt_{Q,n}(\Ocal)\to Z(\Tt_0)/T_R$.
\end{Prop}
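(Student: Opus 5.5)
The plan is to build $\varphi$ in three stages: first on $J_n\otimes\Ocal^\times$, then check that it descends to the quotient, and finally get injectivity by composing with $\pr$ and Proposition \ref{P:description_of_Z(T_0)_T_R}.

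\emph{Step 1: a homomorphism on $J_n\otimes\Ocal^\times$.} Consider the composite
\[
J_n\otimes\Ocal^\times\longrightarrow T_{Q,n}^J\xrightarrow{\;\s_J\;}\Tt_{Q,n}\longrightarrow Z(\Tt),
\]
and note that the image of $J_n\otimes\Ocal^\times$ lies in $\Tt_0$. By Lemma \ref{L:existence_of_s_J} this composite equals $y\otimes u\mapsto \s_J(y(u))\in Z(\Tt)\cap\Tt_0\subset Z(\Tt_0)$. Since $\s_J$ is a splitting on the abelian group $T^J$, the composite is a homomorphism.

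\emph{Step 2: descent to $(J_n/nY)\otimes(\Ocal^\times/R)$.} We must show that $nY\otimes\Ocal^\times$ and $J_n\otimes R$ both map into $T_R$.
\begin{itemize}
\item For $ny\otimes u$ we have $\s_J(ny(u))=\s_n(ny(u))=\widehat{ny}(u)$, which maps to $\s(y(u)^n)=\s(y(u^n))$ in $Z(\Tt)$. Since $u^n\in\Ocal^{\times n}\subset R$, this lies in $\s_R(T_R)$.
\item For $y\otimes r$ with $r\in R$, the element $y(r)$ lies in $T^J\cap T_R$. By Lemma \ref{L:compatibility_of_S_J_and_S_R} (this is where \ref{Hyp_eta_ext} and \ref{Hyp_eta_int} are used), $\s_J(y(r))=\s_R(y(r))\in T_R$.
\end{itemize}
Hence the map descends to a well-defined homomorphism $\varphi$ into $Z(\Tt_0)/T_R$.

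\emph{Step 3: injectivity.} Compose $\varphi$ with $\pr$. By construction $\pr\circ\varphi(y\otimes u)=\pr(\yt(u))$ modulo $T_R$. This is the restriction of the isomorphism of Proposition \ref{P:description_of_Z(T_0)_T_R} along the map $(J_n/nY)\otimes(\Ocal^\times/R)\to(\Yt/nY)\otimes(\Ocal^\times/R)$. That map is injective because $\Ocal^\times/R$ is a free $\Z/p^\ell\Z$-module, so $-\otimes(\Ocal^\times/R)$ is exact on these finite groups: it factors through $-\otimes\Z/p^\ell\Z$ followed by a direct sum of copies. One subtlety is that exactness of $-\otimes\Z/p^\ell\Z$ fails in general. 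I would handle this using Proposition \ref{P:three_identities_mod_p^ell}, which reduces everything to the $p^\ell$-parts $J_{p^\ell}/p^\ell Y\subset Y_{Q,p^\ell}/p^\ell Y$. The groups $J_n/nY\subset\Yt/nY$ are $n$-torsion, and tensoring with $\Z/p^\ell\Z$ just takes the $p$-primary part, which is exact. Consequently $\pr\circ\varphi$ is injective, and therefore so is $\varphi$. I expect this step to be the main obstacle, the concern being exactly the exactness of the tensor product.

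\emph{Step 4: the image.} For $\supseteq$: the images of $\s_\eta(T^{sc}_{Q,n}(\Ocal))$ and $\gt(T_0)$ are $\s_J$ applied to $y(u)$ with $y\in\Yt^{sc}$ or $y\in nY$ and $u\in\Ocal^\times$. This uses that $\s_J$ extends both $\s_\eta$ and $\s_n$. Such elements are exactly $\varphi$ of the generators $y\otimes u$. For $\subseteq$: every element of $J_n\otimes\Ocal^\times$ is a sum of such generators, and $\varphi$ is a homomorphism, so the image of $\varphi$ is generated by these images. The two sets therefore coincide.
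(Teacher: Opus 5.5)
Your proposal is correct and follows the same route as the paper. Well-definedness comes from $\s_n$ together with the compatibility Lemma~\ref{L:compatibility_of_S_J_and_S_R}; injectivity comes from viewing $\varphi$ as a lift of the restriction of the isomorphism in Proposition~\ref{P:description_of_Z(T_0)_T_R}; and the image is read off from the fact that $\s_J$ extends both $\s_\eta$ and $\s_n$. Your Step~3 also supplies a detail the paper leaves implicit: the map $(J_n/nY)\otimes(\Ocal^\times/R)\to(\Yt/nY)\otimes(\Ocal^\times/R)$ is injective because, on $n$-torsion groups, tensoring with $\Z/p^\ell\Z$ just takes the $p$-primary part, which is exact.
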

\begin{proof}
Since we are assuming \ref{Hyp_eta_ext}, the splitting $\s_J$ exists. Since we are also assuming \ref{Hyp_eta_int} the splitting is compatible with $\s_R$, so we know that $\s_J(y(u))$ is independent of the choice of $u$. The homomorphism is injective because it is a lift of an injection.

The assertion for the image of $\varphi$ is clear from the definitions of the maps involved.
\end{proof}

Hence we have proved the second part of Theorem \ref{Thm:Main_Thm}.
\begin{Prop}
Assume \ref{Hyp_eta_ext} and \ref{Hyp_eta_int}. Then the number of distinguished pseudo-spherical representations of $\Tt_0$ is
\[
\Big|Y_{Q,p^{\ell}}/ J_{p^{\ell}}\Big|^{[F:\Q_p]}=\Big|(\Yt/J_{n})\otimes (\Ocal^{\times}/R)\Big|.
\]
\end{Prop}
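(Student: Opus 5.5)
By Lemma \ref{L:rep_of_2-step_unipotent_group}, pseudo-spherical representations of $\Tt_0$ are in bijection with $\epsilon$-genuine characters of $Z(\Tt_0/T_R)=Z(\Tt_0)/T_R$ (Proposition \ref{P:center_of_M_N}). A pseudo-spherical representation is distinguished exactly when the corresponding character $\chi$ of $Z(\Tt_0)$, which is trivial on $T_R$, is distinguished. We count such characters in three steps.

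\textbf{Step 1: reformulate distinguishedness.} Distinguishedness means that the pullback of $\chi$ to $\Tt_{Q,n}(\Ocal)$ kills $\s_\eta(T_{Q,n}^{sc}(\Ocal))$ and $\gt(T_0)$. Since $\chi$ already factors through $Z(\Tt_0)/T_R$, this is equivalent to $\chi$ being trivial on the image in $Z(\Tt_0)/T_R$ of the subgroup generated by these two groups. By Proposition \ref{P:splitting_S_J_center}, that image is exactly $\varphi\big((J_n/nY)\otimes(\Ocal^\times/R)\big)$.

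\textbf{Step 2: identify the image of $\varphi$ as a complement to $\mu_n$.} The image of $\varphi$ lies in the image of the splitting $\s_J$, so it meets $\mu_n$ trivially. Hence $\varphi$ maps $(J_n/nY)\otimes(\Ocal^\times/R)$ isomorphically onto a subgroup $A\subset Z(\Tt_0)/T_R$ with $A\cap\mu_n=1$. Distinguished pseudo-spherical representations therefore correspond to $\epsilon$-genuine characters of the finite abelian group $Z(\Tt_0)/T_R$ that are trivial on $A$. These are the same as $\epsilon$-genuine characters of the quotient $(Z(\Tt_0)/T_R)/A$, which is again a central extension of a finite abelian group by $\mu_n$, because $A\cap\mu_n=1$.

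\textbf{Step 3: count.} For a finite abelian group $C$ containing $\mu_n$, the $\epsilon$-genuine characters form a torsor under $(C/\mu_n)^\vee$, so their number is $|C/\mu_n|$. Proposition \ref{P:description_of_Z(T_0)_T_R} gives $\pr(Z(\Tt_0)/T_R)\cong(\Yt/nY)\otimes(\Ocal^\times/R)$, and $\pr(A)$ is the image of $(J_n/nY)\otimes(\Ocal^\times/R)$ under that isomorphism. The count is therefore
\[
\Big|\big((\Yt/nY)\otimes(\Ocal^\times/R)\big)\big/\mathrm{im}\big((J_n/nY)\otimes(\Ocal^\times/R)\big)\Big|=\Big|(\Yt/J_n)\otimes(\Ocal^\times/R)\Big|,
\]
using right exactness of $\otimes$. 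The second expression in the statement then follows from the third isomorphism of Proposition \ref{P:three_identities_mod_p^ell}, together with $\Ocal^\times/R\cong(\Z/p^\ell\Z)^{[F:\Q_p]}$.

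\textbf{Main obstacle.} The delicate point is Step 2: the identification of the image of $\varphi$ with the two distinguished subgroups, and $\varphi$ being well defined and injective. The well-definedness uses the compatibility of $\s_J$ with $\s_R$ (Lemma \ref{L:compatibility_of_S_J_and_S_R}), which is where \ref{Hyp_eta_ext} and \ref{Hyp_eta_int} enter. Since Proposition \ref{P:splitting_S_J_center} supplies both facts, the remaining work is the bookkeeping that the composite $\pr\circ\varphi$ agrees with the natural map $(J_n/nY)\otimes(\Ocal^\times/R)\to(\Yt/nY)\otimes(\Ocal^\times/R)$.
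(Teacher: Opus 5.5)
Your proposal is correct and follows the paper's argument. You count the $\epsilon$-genuine characters of $Z(\Tt_0)/T_R$ that are trivial on the image of the injective map $\varphi$ of Proposition \ref{P:splitting_S_J_center}, using Proposition \ref{P:description_of_Z(T_0)_T_R} for the ambient group, and the paper's count rests on exactly these two results. The only differences are cosmetic: you get the quotient directly as $(\Yt/J_n)\otimes(\Ocal^\times/R)$ by right exactness of $\otimes$ and state the condition $A\cap\mu_n=1$ explicitly, whereas the paper computes $|(\Yt/nY)\otimes(\Ocal^\times/R)|$ and $|(J_n/nY)\otimes(\Ocal^\times/R)|$ separately via the Chinese remainder theorem and takes their ratio.
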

\begin{proof}
The distinguished pseudo-spherical representations are in one-to-one correspondence with the genuine characters of $Z(\Tt_0)/ T_R$ which are trivial on the image of $\varphi$, where $\varphi$ is from Proposition \ref{P:splitting_S_J_center}. Hence the number is equal to
\[
\frac{|(Y_{Q,n}/ nY)\otimes\Ocal^\times/ R|}{|(J_n/ nY)\otimes\Ocal^\times/ R|}.
\]
We know
\[
\Big|(Y_{Q,n}/ nY)\otimes\Ocal^\times/ R\Big|=\Big|Y_{Q, p^\ell}/ p^\ell Y\Big|^{[F:\Q_p]}.
\]
Also by the Chinese remainder theorem, we have
\[
J_n/nY\cong J_{p^{\ell}}/p^{\ell}Y\oplus J_{m}/m Y.
\]
Hence
\[
\Big|(J_n/ nY)\otimes\Ocal^\times/ R\Big|=\Big|J_{p^{\ell}}/p^{\ell}Y \Big|^{[F:\Q_p]}.
\]
Hence
\begin{align*}
\frac{|(Y_{Q,n}/ nY)\otimes\Ocal^\times/ R|}{|(J_n/ nY)\otimes\Ocal^\times/ R|}&=\left(\frac{|Y_{Q, p^\ell}/ p^\ell Y|}{|J_{p^{\ell}}/p^{\ell}Y|}\right)^{[F:\Q_p]}=\Big|Y_{Q, p^\ell}/J_{p^\ell}\Big|^{[F:\Q_p]}\\
&=\Big|(\Yt/J_{n})\otimes (\Ocal^{\times}/R)\Big|.
\end{align*}
The proposition follows.
\end{proof}

The following is an important property of distinguished pseudo-spherical representations, which finishes our proof of Theorem \ref{Thm:Main_Thm}.
\begin{Prop}
Assume \ref{Hyp_eta_ext} and \ref{Hyp_eta_int}, and let $\tau$ be a distinguished pseudo-spherical representation of $\Tt_0$.  Then $\tau$ is $W$-invariant.
\end{Prop}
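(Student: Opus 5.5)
We reduce $W$-invariance of $\tau$ to $W$-invariance of its central character, and then prove the latter using the structure of $Z(\Tt_0)$ found above. The group $W$ acts on $\Tt$, hence on $\Tt_0$, through conjugation by lifts $\wt_\alpha(1)$ in $\overline{N(T)}$. This action is well defined because $\Tt$ acts trivially on its own irreducible representations up to isomorphism. The $W$-action preserves $\Tt_0$ and $\mu_n$. By Proposition \ref{Prop:TRSplits} (third bullet, using \ref{Hyp_eta_int}), $\s_R$ is $W$-equivariant, so $W$ preserves $T_R\subset\Tt_0$ and acts on $\Tt_0/T_R$. A pseudo-spherical $\tau$ is the same as a genuine irreducible representation of the finite central extension $\Tt_0/T_R$ of the abelian group $T_0/T_R$. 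By Lemma \ref{L:rep_of_2-step_unipotent_group} and the bijection \eqref{E:bijection_Weyl_invariant_rep}, $\tau$ is $W$-invariant as soon as its central character $\chi$, a genuine character of $Z(\Tt_0/T_R)=Z(\Tt_0)/T_R$, is $W$-invariant. Here we use Proposition \ref{P:center_of_M_N}.

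To show ${}^w\chi=\chi$, use Corollary \ref{C:centers_are_equal_mod_T_R}: $Z(\Tt_0)$ is generated by $T_R$ and $Z(\Tt)\cap\Tt_0$. On $T_R$, $\chi$ is trivial because $\tau$ has nonzero $T_R$-fixed vectors and $T_R$ is central. Since $T_R$ is $W$-stable, ${}^w\chi$ is also trivial there. It therefore suffices to compare $\chi$ and ${}^w\chi$ on $Z(\Tt)\cap\Tt_0$, which is the image of $\Tt_{Q,n}(\Ocal)$, generated by $\mu_n$ and the elements $\yh(u)$ with $y\in\Yt$ and $u\in\Ocal^\times$ (\cite[Proposition 4.1]{W09}).

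The key step is to show that the restriction of $\chi$ to this image extends to a distinguished character of $Z(\Tt)$, namely a genuine character of $\Tt_{Q,n}$ trivial on $\s_\eta(T^{sc}_{Q,n})$ and $\gt(T)$. The pullback of $\chi$ to $\Tt_{Q,n}(\Ocal)$ is trivial on $\s_\eta(T_{Q,n}^{sc}(\Ocal))$ and $\gt(T_0)$. Under \ref{Hyp_eta_ext}, Lemma \ref{L:existence_of_s_J} supplies the splitting $\s_J$ over $T^J_{Q,n}$, which we use to view the subgroup $\s_J(T^J_{Q,n})$. The plan is to define a character on the subgroup $\Tt_{Q,n}(\Ocal)\cdot\s_J(T^J_{Q,n})$ of the abelian group $\Tt_{Q,n}$. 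This character is $\chi$ on the first factor and trivial on the second. For it to be well defined, $\chi$ must be trivial on $\Tt_{Q,n}(\Ocal)\cap\s_J(T^J_{Q,n})$. I expect this consistency check to be the main obstacle. I would handle it by showing the intersection is $\s_J$ of $T^J_{Q,n}\cap T_{Q,n}(\Ocal)$, then reducing by Smith normal form, as in Lemma \ref{L:compatibility_of_S_J_and_S_R}, to elements generated by $\s_\eta(T^{sc}_{Q,n}(\Ocal))$, $\gt(T_0)$, and pieces landing in $T_R$. Compatibility of $\s_J$ with $\s_R$ (Lemma \ref{L:compatibility_of_S_J_and_S_R}, which uses \ref{Hyp_eta_int}) then handles the pieces in $T_R$.

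Once the character is defined on this subgroup, extend it arbitrarily to all of $\Tt_{Q,n}$, which is possible because the group is abelian. The result is a distinguished character $\tilde\chi$ of $Z(\Tt)$. By \cite[Theorem 6.8]{GG18}, $\tilde\chi$ is $W$-invariant. The map $\Tt_{Q,n}(\Ocal)\to Z(\Tt)\cap\Tt_0$ is $W$-equivariant, since the $W$-actions on both sides come from conjugation in $\overline{N(T)}$. Hence $\chi|_{Z(\Tt)\cap\Tt_0}=\tilde\chi|_{Z(\Tt)\cap\Tt_0}$ is $W$-invariant. Combined with the $T_R$ part, this shows $\chi$ is $W$-invariant on all of $Z(\Tt_0)$, and therefore $\tau$ is $W$-invariant.
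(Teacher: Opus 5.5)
Your argument is correct, but it reaches the key step by a different route from the paper. The paper works directly on generators of $Z(\Tt_0)/T_R$, namely the classes of $\yt(t)$ with $y\in\Yt$ and $t\in\Ocal^\times$ (Proposition \ref{P:description_of_Z(T_0)_T_R}). It applies the explicit identity $\wt_\alpha(1)\,\yt(t)\,\wt_\alpha(-1)=\yt(t)\,\s_\eta(-\la\alpha,y\ra\alpha^\vee(t))$ from \cite[Theorem 6.8 and (6.7)]{GG18}. So conjugation changes each generator only by an element of $\s_\eta(T^{sc}_{Q,n}(\Ocal))$, where the central character is trivial by the definition of distinguished. You instead prove an extension statement and then quote \cite[Theorem 6.8]{GG18} as a black box. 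The extension statement is that the restriction to $Z(\Tt)\cap\Tt_0$ of a distinguished character of $Z(\Tt_0)$ extends to a distinguished character of $Z(\Tt)$; this is a converse to Proposition \ref{Prop:dist_cmpt_dist}.

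The paper's route is shorter and needs only triviality on $\s_\eta(T^{sc}_{Q,n}(\Ocal))$. Yours avoids the conjugation formula but genuinely uses $\s_J$, hence \ref{Hyp_eta_ext}. It also gives a byproduct: every distinguished pseudo-spherical representation of $\Tt_0$ occurs in the restriction of a distinguished pseudo-spherical representation of $\Tt$. This holds because the extended character $\tilde\chi$ agrees with $\chi$ on $Z(\Tt)\cap\Tt_0\supset Z(\Tt)\cap T_R$, so $\tilde\chi$ is automatically pseudo-spherical.

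The consistency check you flag as the main obstacle is immediate, and it needs neither $\s_R$ nor Lemma \ref{L:compatibility_of_S_J_and_S_R}.
\begin{enumerate}[label=(\roman*), leftmargin=*]
\item Since $\Tt_{Q,n}(\Ocal)=\pr_{Q,n}^{-1}(T_{Q,n}(\Ocal))$, the intersection is $\s_J(T^J_{Q,n}\cap T_{Q,n}(\Ocal))$.
\item Choose a basis $\{y_j\}$ of $\Yt$ such that $\{d_jy_j\}$ is a basis of $J$; all $d_j\neq 0$ because $n\Yt\subset J$.
\item Since $F^{\times d}\cap\Ocal^\times=\Ocal^{\times d}$, the intersection $T^J_{Q,n}\cap T_{Q,n}(\Ocal)$ is the image of $J\otimes\Ocal^\times$.
\item The $\s_J$-image of that group is generated by $\s_\eta(T^{sc}_{Q,n}(\Ocal))$ and $\gt(T_0)$, which is exactly where the pullback of $\chi$ is trivial by definition.
\end{enumerate}

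Two small corrections. First, conjugation by $\Tt$ does \emph{not} act trivially on $\Irr(\Tt_0)$: by Theorem \ref{Thm:CmptBranch}, $\ybb(\varpi)$ twists by $\varphi_y$. The $W$-action is well defined instead because the lifts $\wt_\alpha(1)$ are integral, so the group they generate meets $\Tt$ inside $\Tt_0$. Second, no $W$-action on $\Tt_{Q,n}$ is needed at the end. It suffices that $\tilde\chi$ is $W$-invariant on $Z(\Tt)$ and agrees with $\chi$ on the $W$-stable subgroup $Z(\Tt)\cap\Tt_0$.
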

\begin{proof}
By \cite[Theorem 6.8 and (6.7)]{GG18}, we know that
for each $\alpha\in\Phi$ and $y\in Y_{Q, n}$ we have
\[
-\la\alpha, y\ra\alpha^\vee\in Y_{Q,n}^{sc},
\]
and further for all $t\in\Ocal^\times$
\[
\wt_\alpha(1)\cdot\yt(t)\cdot\wt_\alpha(-1)=\yt(t)\cdot\s_{\eta}(-\la\alpha, y\ra\alpha^\vee(t)).
\]

Hence if $\chi_\tau$ is the character of $Z(\Tt_0)/ T_R$ corresponding to $\tau$, so that it is trivial on the elements of the form $\s_J(y(u))$ for $y\in J_n=Y_{Q, n}^{sc}+nY$, then it is trivial on the elements of the form $\s_{\eta}(-\la\alpha, y\ra\alpha^\vee(t))$ because $\s_J$ extends $\s_\eta$ (Subsection \ref{SS:Dist_Char}). Thus $\chi_\tau$, and hence $\tau$, is $W$-invariant.
\end{proof}


\section{Restriction from $\Tt$ to $\Tt_0$}\label{S:Torus_Reps_II}


In this section we consider branching from $\Tt$ to $\Tt_0$. In particular, we will show that restriction of an arbitrary (genuine) representation of $\Tt$ to $\Tt_0$ is always multiplicity free, and  restriction of a pseudo-spherical $\Tt$-representation always contains a unique pseudo-spherical $\Tt_{0}$-constituent. Along the way, we will show that every irreducible genuine representation of $\Tt_{0}$ is a type in the sense of Bushnell-Kutzko.

Let us recall the following notation. For a lattice $L\subset Y$ we have set
\[
T^L:=\text{image of $L\otimes F^\times\to Y\otimes F^\times$}.
\]

\subsection{Branching from $\Tt$ to $\Tt_0$}
In this subsection, we consider restriction of an irreducible genuine representation of $\Tt$ to $\Tt_0$; in particular, we will show that the restriction is always multiplicity free.

Let us start with
\begin{Lem}\label{Lem:TorusSubIsos}
    Let $r_{0}\in \Ocal^{\times}$ be defined as in Lemma \ref{Lem:IntTriv}. Let $L$ be a lattice such that $nY\subset L\subset Y$. Let us also write $T_{\Ocal^{\times n}}=T^{nY}\cap T_{0}$. 
\begin{enumerate}[label=(\arabic*), leftmargin=*]
        \item\label{Lem:TorusSubIsos:Arb_Lattice1}
        The map $f_{L}:L\rightarrow (T^{L} \cap T_{R})/T_{\Ocal^{\times n}}$ defined by $\ell\mapsto \ell(r_{0})T_{\Ocal^{\times n}}$ induces an isomorphism
        \begin{equation*}
             L/nY\cong (T^{L}\cap T_{R})/T_{\Ocal^{\times n}}. 
        \end{equation*}
        \item\label{Lem:TorusSubIsos:Arb_Lattice2}
        The map $f_{L}^{\prime}:Y\rightarrow T_{R}/(T^{L}\cap T_{R})$ defined by $y\mapsto y(r_{0})T^{L}\cap T_{R}$ induces an isomorphism
        \begin{equation*}
             Y/L\cong T_{R}/T^{L}\cap T_{R}. 
        \end{equation*}
\end{enumerate}
\end{Lem}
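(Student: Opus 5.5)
The plan is to reduce everything to the structure of $R$ from Lemma \ref{L:integral_radical_is_cyclic}, which gives $R = \la r_0\ra \cdot \Ocal^{\times n}$ with $R/\Ocal^{\times n}$ cyclic of order $n$ generated by $r_0$. We work in coordinates throughout. Fix a $\Z$-basis $y_1,\dots,y_N$ of $Y$, so that $T_0 \cong (\Ocal^\times)^N$, $T_R \cong R^N$ and $T_{\Ocal^{\times n}} = T^{nY}\cap T_0 \cong (\Ocal^{\times n})^N$. The last identification holds because an element $\prod y_j(u_j)$ of $T^{nY}$ is $\prod y_j(v_j^n)$ with $v_j\in F^\times$. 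If moreover $u_j\in\Ocal^\times$, then $v_j$ is a unit, so $v_j^n\in\Ocal^{\times n}$.

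Combining these, the map $Y\to T_R/T_{\Ocal^{\times n}}$, $y\mapsto y(r_0)T_{\Ocal^{\times n}}$, is a surjective homomorphism. Surjectivity follows from $R=\la r_0\ra\Ocal^{\times n}$ coordinatewise. The kernel is exactly $nY$, because $r_0^{a}\in\Ocal^{\times n}$ iff $n\mid a$, $r_0$ having order $n$ in $R/\Ocal^{\times n}$. This gives a master isomorphism $Y/nY\cong T_R/T_{\Ocal^{\times n}}$, and both parts will follow from it once we identify the image of $L$.

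\textbf{Part (1).} The restriction of the master map to $L$ is $f_L$. Its image lies in $(T^L\cap T_R)/T_{\Ocal^{\times n}}$, since $\ell(r_0)\in T^L\cap T_R$ and $T_{\Ocal^{\times n}}\subset T^{nY}\cap T_R\subset T^L\cap T_R$. Its kernel is $L\cap nY=nY$. The main point is surjectivity: given $t\in T^L\cap T_R$, we must show $t\in \ell(r_0)T_{\Ocal^{\times n}}$ for some $\ell\in L$.

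By the master isomorphism we can write $t=y(r_0)s$ with $y\in Y$ and $s\in T_{\Ocal^{\times n}}\subset T^L$. Hence $y(r_0)\in T^L$, and it suffices to show that $y(r_0)\in T^L$ forces $y\in L+nY=L$. This is the main obstacle.

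To handle it, choose a Smith basis: a basis $y_j$ of $Y$ with $L=\bigoplus d_j\Z y_j$ and $d_j\mid n$, which is possible since $nY\subset L$. Then $T^L=\prod_j y_j(F^{\times d_j})$, so $y=\sum a_jy_j$ satisfies $y(r_0)\in T^L$ iff $r_0^{a_j}\in F^{\times d_j}$ for all $j$. We then need: $r_0^{a}\in F^{\times d}$ with $d\mid n$ implies $d\mid a$. To prove this, pair with $\varpi$. If $r_0^a=w^d$, then $(r_0,\varpi)_n^{a}=(w,\varpi)_n^{d}$. This element lies in $\mu_n^{d}$, the subgroup of order $n/d$. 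But $(r_0,\varpi)_n$ is primitive by Lemma \ref{Lem:IntTriv}, so $\zeta^a\in\mu_n^d$ gives $d\mid a$. Hence $y\in L$, and surjectivity follows.

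\textbf{Part (2).} The map $f'_L$ is the master surjection $Y\to T_R/T_{\Ocal^{\times n}}$ followed by the quotient map to $T_R/(T^L\cap T_R)$, so it is surjective. Its kernel is the set of $y$ with $y(r_0)\in T^L$. By the argument in Part (1) this kernel is $L$; conversely, $L$ is clearly contained in it. So $f'_L$ induces $Y/L\cong T_R/(T^L\cap T_R)$. Alternatively, one can deduce Part (2) from Part (1) and the master isomorphism via the third isomorphism theorem.
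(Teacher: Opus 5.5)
Your proof is correct and follows essentially the same route as the paper: a Smith-normal-form basis adapted to $nY\subset L\subset Y$, surjectivity of $f_L$ reduced to the coordinatewise divisibility $d_j\mid a_j$, and Part (2) deduced from the isomorphism $Y/nY\cong T_R/T_{\Ocal^{\times n}}$. The only cosmetic difference is how you get $d_j\mid a_j$: you pair with $\varpi$ and use that $(r_0,\varpi)_n$ is primitive, whereas the paper raises $u_j^{d_j}\equiv r_0^{k_j}$ to the $n/d_j$-th power and uses that $r_0$ has order $n$ in $R/\Ocal^{\times n}$.
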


\begin{proof}
    We choose a basis for $Y$ to see that the kernel of $f_{L}$ is exactly $nY$.

    To prove that $f_{L}$ is surjective we choose compatible bases for $Y$ and $L$ as follows. Let $\{y_{j}\}$ be a basis for $Y$ such that $\{d_{i}y_{i}\}$ is a basis for $L$ for some $d_{i}\in\Z$. Note that $d_{i}$ divides $n$, because $nY\subset L\subset Y$. Suppose that $k_{j}\in\Z$ and $u_{j}\in\Ocal^{\times}$ are such that \[ \prod_{j}y_{j}(u_{j}^{d_{j}})T_{\Ocal^{\times n}}=\prod_{j}y_{j}(r_{0}^{k_{j}})T_{\Ocal^{\times n}} \in (T^{L}\cap T_{R})/T_{\Ocal^{\times n}}.\]
    Thus for all $j$ we have $ u_{j}^{d_{j}}=r_{0}^{k_{j}} $ modulo $\Ocal^{\times n}$. Raising both sides of the equation to the $\frac{n}{d_{j}}$-th power shows $k_{j}\frac{n}{d_{j}}$ is divisible by $n$. So $d_{j}$ divides $k_{j}$. Thus $\sum_{j}k_{j}y_{j}\in L$ and this element maps onto $\prod_{j}y_{j}(r_{0}^{k_{j}})T_{\Ocal^{\times n}}$, proving that the map is surjective.

    Item \ref{Lem:TorusSubIsos:Arb_Lattice2} directly follows from Item \ref{Lem:TorusSubIsos:Arb_Lattice1}, since $Y/L=(Y/nY)/(L/nY)$.
\end{proof}

Next, we define a family of characters $\varphi_y$ of $T$. For $y\in Y$, define
\[
\varphi_y:T\longrightarrow\C^\times,\quad t\mapsto \epsilon([\widebar{t}, \widebar{y}(\varpi)]).
\]
Certainly $\varphi_y$ is trivial on $\pr(Z(\Tt))$. Also note that $\varphi_{y}|_{T_{R}}$ does not depend on the choice of uniformizer $\varpi$.

\begin{Lem}\label{Lem:TR/ZTDual}
    The map $Y\rightarrow \mathrm{Hom}(T_{R}/\left(\pr(Z(\Tt))\cap T_{R}\right),\,\mathbb{C}^{\times})$ defined by $y\mapsto \varphi_{y}|_{T_{R}}$ induces an isomorphism of abelian groups
    \[Y/\Yt\cong \mathrm{Hom}(T_{R}/\left(\pr(Z(\Tt))\cap T_{R}\right),\,\mathbb{C}^{\times}).\]
\end{Lem}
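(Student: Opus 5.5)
We first compute $\varphi_y$ on generators of $T_R$. By \eqref{Comm_Id}, for $z\in Y$ and $r\in R$,
\[
\varphi_y(z(r))=\epsilon\bigl((r,\varpi)_n^{B_Q(z,y)}\bigr).
\]
Since $r\in R$, the pairing $(r,-)_n$ is trivial on $\Ocal^\times$, so this value does not depend on $\varpi$.

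\emph{Step 1: the map is well defined.} Let $y\in\Yt$. Then $B_Q(z,y)\in n\Z$ for every $z\in Y$, so $\varphi_y|_{T_R}$ is trivial. Each $\varphi_y$ is trivial on $\pr(Z(\Tt))\cap T_R$, because elements of $Z(\Tt)$ commute with $\ybb(\varpi)$. The map $y\mapsto\varphi_y|_{T_R}$ is a homomorphism by bilinearity of the commutator in the abelian-by-central situation; concretely this is bilinearity of $B_Q$ in the formula above. It therefore induces
\[
Y/\Yt\to \Hom\bigl(T_R/(\pr(Z(\Tt))\cap T_R),\C^\times\bigr).
\]

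\emph{Step 2: injectivity.} Use $r_0$ from Lemma \ref{Lem:IntTriv}, for which $(r_0,\varpi)_n$ is a primitive $n$-th root of unity. If $\varphi_y|_{T_R}=1$, then $\epsilon((r_0,\varpi)_n^{B_Q(z,y)})=1$ for all $z\in Y$. Since $\epsilon$ is faithful, this forces $B_Q(z,y)\in n\Z$ for all $z$, that is, $y\in\Yt$.

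\emph{Step 3: equal cardinalities.} We first identify $\pr(Z(\Tt))$. By \cite[Proposition 4.1]{W09}, or \eqref{E:T_Q_n_and_Z(T)}, $\pr(Z(\Tt))=T^{\Yt}$. Hence
\[
T_R/(\pr(Z(\Tt))\cap T_R)=T_R/(T^{\Yt}\cap T_R).
\]
Lemma \ref{Lem:TorusSubIsos}\ref{Lem:TorusSubIsos:Arb_Lattice2} with $L=\Yt$ applies because $nY\subset\Yt\subset Y$. It identifies this quotient with $Y/\Yt$, a finite group. A finite abelian group and its Pontryagin dual have the same order, so the injective map of Step 2 is an isomorphism.

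The main obstacle is the identification $\pr(Z(\Tt))\cap T_R=T^{\Yt}\cap T_R$, which requires the full description of $\pr(Z(\Tt))$ as $T^{\Yt}$. If one wants to avoid that description, there is a workaround. Only the inclusion $T^{\Yt}\subset\pr(Z(\Tt))$ is immediate from \eqref{Comm_Id}. This gives a surjection from $T_R/(T^{\Yt}\cap T_R)\cong Y/\Yt$ onto $T_R/(\pr(Z(\Tt))\cap T_R)$. So the target of our map has order at most $|Y/\Yt|$, and injectivity from Step 2 then forces equality.
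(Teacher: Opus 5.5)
Your proof is correct and follows the paper's argument: \eqref{Comm_Id} shows that the kernel of $y\mapsto\varphi_y|_{T_R}$ is exactly $\Yt$, and surjectivity follows by counting via Lemma~\ref{Lem:TorusSubIsos}~\ref{Lem:TorusSubIsos:Arb_Lattice2} with $L=\Yt$. You also make explicit the identification $\pr(Z(\Tt))\cap T_R=T^{\Yt}\cap T_R$, which the paper uses tacitly, and your workaround via the easy inclusion $T^{\Yt}\subset\pr(Z(\Tt))$ validly avoids citing the full description of the center.
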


\begin{proof}
    One can directly check the map is a homomorphism. Equation \eqref{Comm_Id} implies that the kernel of the map $y\mapsto \varphi_{y}|_{T_{R}}$ is $\Yt$. Thus we have an injection $Y/\Yt\hookrightarrow \mathrm{Hom}(T_{R}/\left(\pr(Z(\Tt))\cap T_{R}\right),\,\mathbb{C}^{\times})$. By Lemma \ref{Lem:TorusSubIsos} \ref{Lem:TorusSubIsos:Arb_Lattice2}, the two groups have the same size and so the map is also surjective.
\end{proof}

\begin{Thm}\label{Thm:CmptBranch}
	Let $\sigma\in\Irr_{\epsilon}(\overline{T})$. Let $\sigma_{0}\in\Irr_{\epsilon}(\overline{T}_{0})$ be a fixed constituent of $\sigma|_{\overline{T}_{0}}$. Then as $\overline{T}_{0}$-representations
	\[\sigma|_{\overline{T}_{0}}\cong \bigoplus_{y\in Y/\Yt}\varphi_{y}\otimes \sigma_{0}.\]
	
	Moreover, $\varphi_{y}\otimes \sigma_{0}\cong \sigma_{0}$ if and only if $y\in \Yt$. Hence the restriction is multiplicity free.
\end{Thm}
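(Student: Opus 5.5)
The plan is to use Clifford theory for the normal subgroup $\Tt_0$, which has finite index in a suitable open subgroup. Set $H=\Tt_0\cdot Z(\Tt)$. Since $\pr(Z(\Tt))=T^{\Yt}$ and $T/T_0\cong Y$, the group $\Tt/H$ is finite, isomorphic to $Y/\Yt$. The standard facts for irreducible genuine representations of $\Tt$ (see \cite{W09}) are: $\sigma\cong\Ind_{\Ht_{\max}}^{\Tt}\rho$ for a maximal abelian subgroup, $\sigma$ is finite-dimensional, and $Z(\Tt)$ acts on $\sigma$ by a character $\chi$. Hence $\sigma|_H$ is a sum of irreducibles of the form $\sigma_0\otimes\chi$, all conjugate under $\Tt/H\cong Y/\Yt$. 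Conjugation by a lift $\ybb(\varpi)$ of a coset representative $y$ sends $\sigma_0$ to $\sigma_0^{y}$, where $\sigma_0^{y}(t)=\sigma_0(\ybb(\varpi)^{-1}t\,\ybb(\varpi))$. For $t\in\Tt_0$ we have $\ybb(\varpi)^{-1}t\,\ybb(\varpi)=[\bar t,\ybb(\varpi)]^{\pm1}\,t$, so $\sigma_0^{y}\cong\varphi_y^{\pm1}\otimes\sigma_0$. Frobenius reciprocity together with Mackey theory then gives
\[
\sigma|_{\Tt_0}\cong m\bigoplus_{y\in Y/\Yt}\varphi_y\otimes\sigma_0,
\]
with the sum over the orbit, counted with stabilizer multiplicity. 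The whole statement therefore follows once we show that $\varphi_y\otimes\sigma_0\cong\sigma_0$ forces $y\in\Yt$.

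For the converse direction, if $y\in\Yt$ then $\ybb(\varpi)$ is central in $\Tt$, so $\varphi_y$ is trivial and the twist is trivially isomorphic.

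For the key direction, suppose $\varphi_y\otimes\sigma_0\cong\sigma_0$. Compare central characters on $\s_R(T_R)\subset Z(\Tt_0)$ (Proposition \ref{Prop:TRSplits}). Since $\sigma_0$ is irreducible, $T_R$ acts on it by a scalar character $\omega$, and $\varphi_y\otimes\sigma_0$ has $T_R$-character $\varphi_y|_{T_R}\,\omega$. Equality of the two forces $\varphi_y|_{T_R}=1$. By Lemma \ref{Lem:TR/ZTDual}, the map $y\mapsto\varphi_y|_{T_R}$ has kernel exactly $\Yt$, so $y\in\Yt$.

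It follows that the $Y/\Yt$-translates of $\sigma_0$ are pairwise non-isomorphic. The stabilizer of $\sigma_0$ is therefore exactly $H$, and $\Ind_H^{\Tt}(\sigma_0\otimes\chi)$ is irreducible by Mackey's criterion. It must then equal $\sigma$, since it has $\sigma$ as a quotient by Frobenius reciprocity, which forces $m=1$ and gives the multiplicity-free decomposition. The main obstacle I expect is the Clifford-theory setup: justifying that $\sigma|_H$ contains $\sigma_0\otimes\chi$, with $\sigma_0$ extending through $Z(\Tt)\cap\Tt_0$ compatibly. This holds because $Z(\Tt)$ acts on $\sigma$ by $\chi$ and $\Tt_0\cap Z(\Tt)$ acts on $\sigma_0$ by the restriction of $\chi$.
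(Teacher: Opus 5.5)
Your proposal is correct and is essentially the paper's argument. Both rest on the same key input: $\varphi_y\otimes\sigma_0$ and $\sigma_0$ have different central characters on $\s_R(T_R)$ unless $y\in\Yt$ (Lemma \ref{Lem:TR/ZTDual}), combined with Clifford theory over $Z(\Tt)\Tt_0$, using $\Tt/Z(\Tt)\Tt_0\cong Y/\Yt$. The paper applies this directly: the translates $\sigma(\ybb(\varpi))\sigma_0$ are pairwise non-isomorphic, so their sum is direct and $\Tt$-stable, hence all of $\sigma$. You instead go through Mackey's irreducibility criterion, which also gives $\sigma\cong\Ind_{Z(\Tt)\Tt_0}^{\Tt}(\sigma_0\otimes\chi)$ along the way; that is the paper's subsequent Proposition \ref{Prop:Tt0Types}.
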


\begin{proof}
Note that by Lemma \ref{Lem:TR/ZTDual}, $\varphi_{y}\otimes \sigma_{0}\cong \sigma_{0}$ if and only if $y\in \Yt$, because $\varphi_{y}$ changes the central character unless $y\in\Yt$, and the representation of $\Tt_0$ is determined by its central character.

Now for each $y\in Y$, the subspace $\sigma(\ybb(\varpi))\cdot \sigma_{0}\subset \sigma$ is $Z(\Tt)\overline{T}_{0}$-invariant and, as a $\overline{T}_{0}$-representation, $\sigma(\ybb(\varpi))\cdot \sigma_{0}\cong \varphi_{y}\otimes \sigma_{0}$. Hence we have
\[
\bigoplus_{y\in Y/\Yt}\varphi_{y}\otimes \sigma_{0}\subset \sigma|_{\Tt_0}.
\]
But the left-hand side is $\Tt$-invariant since $Y/ Y_{Q, n}\cong \Tt/ \Tt_0Z(\Tt)$ via the map $y\mapsto \ybb(\varpi)$, and hence equals the right-hand side by irreducibility of $\sigma$. 
\end{proof}

\begin{Rmk}
Theorem \ref{Thm:CmptBranch} shows that \cite[Proposition 6.1]{HW25} holds for general split tori, without the decomposing Levis hypothesis.
\end{Rmk}

\subsection{Type theory}

The above theorem also implies that an irreducible genuine representation of $\Tt_0$ is always a type in the sense of Bushnell-Kutzko. To see it, we need to consider how to construct a representation of $\Tt$ from one of $\Tt_0$ first by extending to $Z(\Tt)\Tt_0$ by adding a character of $Z(\Tt)$, and second by inducing to $\Tt$. This can be done as follows.

Let $\Yt(\varpi)\subset T$ be the subgroup of elements of the form $y(\varpi)$, where $y\in \Yt$. Note that $\Yt(\varpi)$ depends on the choice of $\varpi$, and the preimage $\overline{\Yt(\varpi)}\subset Z(\Tt)$.

\begin{Lem}\label{Lem:FactorZT0}
	We have a short exact sequence of groups
	\begin{equation*}
		\mu_{n}\hookrightarrow \overline{\Yt(\varpi)}\times \overline{T}_{0}\twoheadrightarrow Z(\overline{T})\overline{T}_{0},
	\end{equation*}
	where the injection embeds $\mu_{n}$ anti-diagonally and the surjection is given by multiplication.
\end{Lem}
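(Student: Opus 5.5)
We prove Lemma \ref{Lem:FactorZT0} by checking, in order, that the multiplication map is a homomorphism, that it is surjective, and that its kernel is the anti-diagonal copy of $\mu_{n}$.

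\textbf{Homomorphism.} Consider the map
\[
m:\overline{\Yt(\varpi)}\times\Tt_0\to\Tt,\qquad (a,b)\mapsto ab.
\]
The first step is to observe that $\overline{\Yt(\varpi)}\subset Z(\Tt)$. This holds because, by \eqref{Comm_Id}, $[\ybb(\varpi),\zbb(v)]=(\varpi,v)_n^{B_Q(y,z)}=1$ whenever $y\in\Yt$, since then $B_Q(y,z)\in n\Z$; together with $\mu_n$ being central, these commutators exhaust what needs checking. Consequently $m$ is a group homomorphism: for $(a,b),(a',b')$ in the product we have $aba'b'=aa'bb'$.

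\textbf{Surjectivity.} The image of $m$ lies in $Z(\Tt)\Tt_0$, and we must show it is all of $Z(\Tt)\Tt_0$. It suffices to show $Z(\Tt)\subset \overline{\Yt(\varpi)}\,\Tt_0$. By \cite[Proposition 4.1]{W09} (already used in Corollary \ref{C:centers_are_equal_mod_T_R}), together with the surjection \eqref{E:T_Q_n_and_Z(T)}, $Z(\Tt)$ is generated by $\mu_n$ and elements $\ybb(u)$ with $y\in\Yt$ and $u\in F^\times$.

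Write $u=\varpi^k v$ with $v\in\Ocal^\times$. By \eqref{Eq:TSectionMult_F},
\[
\yt(\varpi^k v)\in\mu_n\,\yt(\varpi^k)\,\yt(v),
\]
and moreover $\yt(\varpi^k)\in\mu_n\,\widetilde{(ky)}(\varpi)$, where $ky\in\Yt$. Hence each generator lies in $\overline{\Yt(\varpi)}\,\Tt_0$. Since $m$ is a homomorphism, its image is a subgroup, so it contains the whole group generated, and the image equals $Z(\Tt)\Tt_0$.

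\textbf{Kernel.} Suppose $ab=1$ with $a\in\overline{\Yt(\varpi)}$ and $b\in\Tt_0$. Then $\pr(a)=\pr(b)^{-1}\in \Yt(\varpi)\cap T_0$. Take a basis of $Y$ and note that $\Yt(\varpi)\subset T\cong (F^\times)^{r}$ consists of elements whose coordinates are powers of $\varpi$, while $T_0$ consists of elements with unit coordinates. An element in both therefore has coordinates that are simultaneously powers of $\varpi$ and units, so each coordinate is $\varpi^0=1$. Hence $\Yt(\varpi)\cap T_0=\{1\}$. This uses only that the map $Y\to T$, $y\mapsto y(\varpi)$, is injective with image meeting $T_0$ trivially, i.e.\ a valuation argument.

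It follows that $a\in\mu_n$ and $b=a^{-1}$, so
\[
\ker m=\{(\zeta,\zeta^{-1})\st \zeta\in\mu_n\},
\]
which is the anti-diagonal embedding of $\mu_n$, and the sequence in the lemma is exact.

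The main point requiring care is the surjectivity step, namely that $Z(\Tt)$ is generated by lifts of $y(u)$ with $y\in\Yt$. This is exactly the cited description of the center from \cite{W09}; everything else is bookkeeping with \eqref{Eq:TSectionMult_F}.
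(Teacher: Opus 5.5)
Your proof is correct and follows the same route as the paper, whose proof simply cites \cite[Proposition 4.1]{W09}. You unpack that citation: centrality of $\overline{\Yt(\varpi)}$ comes from \eqref{Comm_Id}, surjectivity uses the same W09 description of $Z(\Tt)$ via \eqref{E:T_Q_n_and_Z(T)}, and the kernel is handled by the valuation argument showing $\Yt(\varpi)\cap T_0=\{1\}$.
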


\begin{proof}
	
	This follows directly from \cite[Proposition 4.1]{W09}.
\end{proof}

Lemma \ref{Lem:FactorZT0} makes clear how to extend a representation of $\overline{T}_{0}$ to $Z(\overline{T})\overline{T}_{0}$; all one needs is to construct a genuine character of $\overline{\Yt(\varpi)}\subset Z(\Tt)$.

\begin{Prop}\label{Prop:Tt0Types}
	Let $\sigma\in \Irr_{\epsilon}(\Tt)$ with central character $\chi$. Let $\sigma_{0}\in\Irr_{\epsilon}(\Tt_{0})$. The restriction of the $\overline{T}$-representation $\sigma$ to $\overline{T}_{0}$ contains $\sigma_{0}$ if and only if $\sigma\cong ind_{Z(\overline{T})\overline{T}_{0}}^{\overline{T}}(\chi|_{\overline{\Yt(\varpi)}}\otimes \sigma_{0})$.	
\end{Prop}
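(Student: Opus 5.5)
We prove the two implications separately, working throughout with the open, finite-index subgroup $H:=Z(\Tt)\Tt_0$ of $\Tt$. Because $\Tt/H\cong Y/\Yt$ is finite, compact induction from $H$ to $\Tt$ agrees with ordinary induction.

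For the \emph{if} direction, suppose $\sigma\cong ind_{H}^{\Tt}(\chi|_{\overline{\Yt(\varpi)}}\otimes\sigma_0)$, where the representation of $H$ is the one provided by Lemma \ref{Lem:FactorZT0}. Restricting $\sigma$ to $\Tt_0$ and using Mackey theory, or simply the fact that the induced space contains the inducing representation as the space of functions supported on $H$, we see that $\sigma|_{\Tt_0}$ contains $\sigma_0$.

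For the \emph{only if} direction, suppose $\sigma|_{\Tt_0}\supset\sigma_0$. Let $V_0\subset\sigma$ denote the $\sigma_0$-isotypic subspace. By Theorem \ref{Thm:CmptBranch}, the restriction is multiplicity free, so $V_0\cong\sigma_0$.

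\begin{enumerate}
\item $V_0$ is $H$-stable. Indeed, $Z(\Tt)$ commutes with $\Tt_0$, so it preserves each $\Tt_0$-isotypic component.
\item $Z(\Tt)$ acts on $V_0$ by the central character $\chi$, since $\sigma$ is irreducible and hence has a central character.
\item Under the factorization $H=\overline{\Yt(\varpi)}\cdot\Tt_0$ of Lemma \ref{Lem:FactorZT0}, the $H$-representation on $V_0$ is therefore exactly $\chi|_{\overline{\Yt(\varpi)}}\otimes\sigma_0$. This representation is well defined because both factors are $\epsilon$-genuine, so they agree on the anti-diagonal $\mu_n$.
\item Frobenius reciprocity for the open subgroup $H$ turns the $H$-embedding $V_0\hookrightarrow\sigma$ into a nonzero $\Tt$-map
\[
ind_H^{\Tt}(\chi|_{\overline{\Yt(\varpi)}}\otimes\sigma_0)\to\sigma,
\]
which is surjective because $\sigma$ is irreducible.
\end{enumerate}

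The main point is to show that this surjection is an isomorphism. We do this by comparing dimensions:
\[
\dim ind_H^{\Tt}(\chi|_{\overline{\Yt(\varpi)}}\otimes\sigma_0)=|Y/\Yt|\cdot\dim\sigma_0 .
\]
On the other side, Theorem \ref{Thm:CmptBranch} gives $\sigma|_{\Tt_0}\cong\bigoplus_{y\in Y/\Yt}\varphi_y\otimes\sigma_0$, so
\[
\dim\sigma=|Y/\Yt|\dim\sigma_0 .
\]
The two dimensions coincide, so the surjection is an isomorphism. Here we need $\sigma$ to be finite dimensional; this holds because $\sigma$ is irreducible and $\Tt$ has the finite-index central subgroup $Z(\Tt)$, via the standard finiteness result for irreducibles of groups that are compact modulo center up to finite index.

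As an alternative to the dimension count, one can check that the induced representation is irreducible by Mackey's criterion. Its conjugates by $\ybb(\varpi)$ are $\varphi_y\otimes\sigma_0$, and for $y\notin\Yt$ these are pairwise non-isomorphic to $\sigma_0$ on $\Tt_0$, again by Theorem \ref{Thm:CmptBranch}.
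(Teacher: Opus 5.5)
Your proof is correct and takes the same approach as the paper. The paper's proof simply cites Theorem \ref{Thm:CmptBranch}, Lemma \ref{Lem:FactorZT0}, Frobenius reciprocity and Mackey theory, and you fill in exactly these ingredients, using the dimension count from Theorem \ref{Thm:CmptBranch} (or equivalently Mackey's irreducibility criterion) to show that the surjection from Frobenius reciprocity is an isomorphism.
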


\begin{proof}
	This follows from Theorem \ref{Thm:CmptBranch}, Lemma \ref{Lem:FactorZT0}, Frobenius reciprocity, and Mackey theory. 
\end{proof}

Proposition \ref{Prop:Tt0Types} and \cite[(5.2) Proposition]{BK98} give the following corollary.

\begin{Cor}\label{Cor:BK_Type}
    Let $\sigma\in \Irr_{\epsilon}(\Tt)$ with a $\Tt_{0}$-constituent $\sigma_{0}$. Then the pair $(\Tt_{0},\sigma_{0})$ is a type for the inertial class $[\Tt,\sigma]_{\Tt}$, in the sense of Bushnell-Kutzko \cite{BK98}.
\end{Cor}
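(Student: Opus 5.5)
The plan is to verify the criterion of \cite[(5.2) Proposition]{BK98}. We regard $\Tt$ as an $l$-group with compact open subgroup $\Tt_{0}$, and we restrict to the category of $\epsilon$-genuine smooth representations, which is a direct factor of the full category because $\mu_{n}$ is central and finite. Let $\mathfrak{s}=[\Tt,\sigma]_{\Tt}$. Since $\Tt$ is its own Levi, the inertial class consists of the twists $\sigma\otimes\nu$ with $\nu$ an unramified character of $\Tt$, meaning $\nu$ is trivial on $\Tt_{0}$ (and we may also allow triviality on $\Tt^{0}$, which here is $\Tt_{0}$). Bushnell--Kutzko's criterion says that $(\Tt_{0},\sigma_{0})$ is an $\mathfrak{s}$-type as soon as the following holds: for every $\pi\in\Irr_{\epsilon}(\Tt)$, the restriction $\pi|_{\Tt_{0}}$ contains $\sigma_{0}$ if and only if $\pi\in\mathfrak{s}$.

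\textbf{Step 1 (inertial class implies containment).} Let $\pi\cong\sigma\otimes\nu$ with $\nu$ unramified. Then $\pi|_{\Tt_{0}}=\sigma|_{\Tt_{0}}$, which contains $\sigma_{0}$ by hypothesis.

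\textbf{Step 2 (containment implies inertial class).} Suppose $\pi|_{\Tt_{0}}$ contains $\sigma_{0}$, and let $\chi'$ be the central character of $\pi$. Proposition \ref{Prop:Tt0Types}, applied to both $\pi$ and $\sigma$, gives
\[
\pi\cong ind_{Z(\Tt)\Tt_{0}}^{\Tt}(\chi'|_{\overline{\Yt(\varpi)}}\otimes\sigma_{0}),\qquad
\sigma\cong ind_{Z(\Tt)\Tt_{0}}^{\Tt}(\chi|_{\overline{\Yt(\varpi)}}\otimes\sigma_{0}).
\]
Both $\chi$ and $\chi'$ are genuine, so the ratio $\chi'/\chi$ restricted to $\overline{\Yt(\varpi)}$ factors through $\Yt(\varpi)\cong\Yt$ and is an ordinary character of this lattice. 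We first extend it to a character of $Y$, which is possible because $\C^{\times}$ is divisible. Pulling back through $T\to T/T_{0}\cong Y$ then gives an unramified character $\nu$ of $\Tt$. On $\overline{\Yt(\varpi)}$ we have $\nu=\chi'/\chi$, and $\nu$ is trivial on $\Tt_{0}$. Lemma \ref{Lem:FactorZT0} shows that $\chi'|_{\overline{\Yt(\varpi)}}\otimes\sigma_{0}$ is well defined on $Z(\Tt)\Tt_{0}$, and it equals $(\chi|_{\overline{\Yt(\varpi)}}\otimes\sigma_{0})\otimes\nu|_{Z(\Tt)\Tt_{0}}$. Since compact induction commutes with twisting by a character of the whole group, this yields $\pi\cong\sigma\otimes\nu\in\mathfrak{s}$.

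\textbf{Step 3 (the criterion).} Steps 1 and 2 give exactly the irreducible-level characterization that \cite[(5.2) Proposition]{BK98} needs in the case where the Bernstein component is determined by a single cuspidal inertial class on $\Tt$ itself. The genuine category is a direct factor cut out by the central idempotent for $\epsilon$ on $\mu_{n}$, so the Bernstein decomposition applies to it. Every irreducible representation of $\Tt$ is cuspidal, since $\Tt$ is compact modulo its center modulo the lattice directions, with no proper parabolics.

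The main obstacle is Step 3: one must make sure the Bushnell--Kutzko framework really applies to the covering group $\Tt$. Concretely, one must check that the Bernstein decomposition of genuine smooth representations of $\Tt$ holds with the $\mathfrak{s}$-components described above, and that \cite[(5.2)]{BK98} can be invoked for representations in which $\mu_{n}$ acts through $\epsilon$. I would handle this by viewing $\Tt$ as an $l$-group that is compact-mod-center-by-lattice, since $\Tt/Z(\Tt)\Tt_{0}$ is finite. Its genuine category then decomposes as the product of blocks, one for each $\Tt_{0}$-representation up to the finite twisting described in Theorem \ref{Thm:CmptBranch}. This follows directly from Proposition \ref{Prop:Tt0Types} together with the fact that $\Tt_{0}$ is compact open, so restriction to it decomposes any smooth representation into isotypic components.
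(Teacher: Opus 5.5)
Your proposal is correct and takes the paper's route: the corollary is derived from Proposition \ref{Prop:Tt0Types} together with \cite[(5.2) Proposition]{BK98}, and your Step 2 spells out the unramified twist $\nu$ (extending $\chi'/\chi$ from $\Yt$ to $Y$) that the paper leaves implicit. One small cleanup for Step 3: $\Tt/Z(\Tt)$ is itself finite, so $\Tt$ is simply compact modulo its center, and this is the cleanest reason every irreducible genuine representation is cuspidal.
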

This corollary generalizes the analogous type result in \cite{Wang24}, where tame covers of split tori are considered.

\subsection{Restriction of pseudo-spherical representations}

Now, we consider restriction of pseudo-spherical representations of $\Tt$ to $\Tt_0$. Recall we denote by $\Irr_{\epsilon}^{ps}(\Tt)$ the set of pseudo-spherical representations of $\Tt$.

Also let us agree on the following terminology. For each abelian group $A$ with $\mu_n\subset A\subset \Tt$, we say a character of $A$ is pseudo-spherical if it is genuine and trivial on $A\cap T_R$. Further, assume that $Z(\Tt)$ admits a distinguished character, so that we have the splitting $\s_J:T^J\rightarrow \Tt$. Then we say a character of $A$ is distinguished if it is genuine and trivial on $A\cap \s_J(T^J)$. We denote the set of pseudo-spherical characters of $A$ by $\Irr_{\epsilon}^{ps}(A)$ and the set of distinguished pseudo-spherical characters by $\Irr_{\epsilon}^{dps}(A)$.

\begin{Prop}\label{Prop:TRFixedConstit}
    Let $\sigma\in\Irr_{\epsilon}^{ps}(\overline{T})$ with central character $\chi$. 
    \begin{enumerate}[leftmargin=*]
        \item\label{Prop:It1:TRFixedConstit} The restriction $\sigma|_{\Tt_{0}}$ has a unique pseudo-spherical constituent $\sigma_0$. 
        \item\label{Prop:It2:TRFixedConstit} Suppose that $T\subset G$ is a maximal split torus with Weyl group $W$ and assume \ref{Hyp_eta_int}. If $\chi$ is $W$-invariant, then the constituent $\sigma_{0}$ is also $W$-invariant.
    \end{enumerate}
\end{Prop}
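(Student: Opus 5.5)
For part (1), I will combine Theorem \ref{Thm:CmptBranch} with the fact that $T_R$ is central in $\Tt_0$ (Proposition \ref{Prop:TRSplits}). By Theorem \ref{Thm:CmptBranch}, if $\sigma_0$ is any constituent of $\sigma|_{\Tt_0}$, then
\[
\sigma|_{\Tt_0}\cong\bigoplus_{y\in Y/\Yt}\varphi_y\otimes\sigma_0,
\]
with pairwise non-isomorphic summands. Since $T_R\subset Z(\Tt_0)$, each summand $\varphi_y\otimes\sigma_0$ has a central character. Restricted to $T_R$, this central character is a character $\omega_0\cdot\varphi_y|_{T_R}$, where $\omega_0$ is the restriction of the central character of $\sigma_0$ to $T_R$. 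So a summand is pseudo-spherical exactly when $\varphi_y|_{T_R}=\omega_0^{-1}$.

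First I will show that $\omega_0$ is trivial on $\pr(Z(\Tt))\cap T_R$. On this group, the elements $\s_R(t)$ lie in $Z(\Tt)$, so they act by the central character $\chi$ of $\sigma$ on all of $\sigma$. Since $\sigma$ is pseudo-spherical, some nonzero vector is fixed by $\s_R(T_R)$. Hence $\chi$ is trivial on $\s_R(\pr(Z(\Tt))\cap T_R)$, and so $\omega_0$ is trivial there as well. Thus $\omega_0^{-1}$ is a character of $T_R/(\pr(Z(\Tt))\cap T_R)$. By Lemma \ref{Lem:TR/ZTDual}, there is then exactly one class $y\in Y/\Yt$ with $\varphi_y|_{T_R}=\omega_0^{-1}$. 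This gives existence and uniqueness of the pseudo-spherical constituent. Existence could alternatively be seen directly, since the $T_R$-fixed vectors of $\sigma$ form a nonzero $\Tt_0$-stable subspace. The key point to check carefully is that $\varphi_y$ genuinely changes the $T_R$-character, i.e.\ that the identification $\varphi_y\otimes\sigma_0$ via $\sigma(\ybb(\varpi))$ is compatible with the action of $\s_R(T_R)$. This follows from the commutator identity \eqref{Comm_Id}.

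For part (2), I will let $w\in W$ act by conjugation by a representative $\wt\in\overline{N(T)}$ (built from the $\wt_\alpha$), which preserves $\Tt$ and $\Tt_0$. Since $\chi$ is $W$-invariant, ${}^w\sigma$ has the same central character as $\sigma$. Both are irreducible genuine representations of $\Tt$, which are determined by their central characters (Stone--von Neumann, as in \cite{W09}). Hence ${}^w\sigma\cong\sigma$. Then ${}^w\sigma_0$ is a constituent of ${}^w\sigma|_{\Tt_0}\cong\sigma|_{\Tt_0}$.

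Under \ref{Hyp_eta_int}, $\s_R$ is $W$-equivariant (Proposition \ref{Prop:TRSplits}), so conjugation by $\wt$ preserves $\s_R(T_R)$. Hence ${}^w\sigma_0$ is again pseudo-spherical. By the uniqueness in part (1), ${}^w\sigma_0\cong\sigma_0$.

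The main obstacle is the step ${}^w\sigma\cong\sigma$. This relies on irreducible genuine $\Tt$-representations being determined by their central characters, which is the Stone--von Neumann statement I will cite from \cite{W09}. It also requires that the $W$-action on $Z(\Tt)$ used in defining $W$-invariance of $\chi$ agrees with conjugation by $\wt$.
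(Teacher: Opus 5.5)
Your proposal is correct and takes essentially the same route as the paper. Part (1) uses Theorem \ref{Thm:CmptBranch}, the triviality of the central character on $Z(\Tt)\cap T_R$ forced by pseudo-sphericity, and Lemma \ref{Lem:TR/ZTDual}; part (2) uses $W$-stability of $\s_R(T_R)$ under \ref{Hyp_eta_int} together with the uniqueness from (1), and the only difference is that you justify ${}^w\sigma\cong\sigma$ via the Stone--von Neumann principle (irreducible genuine $\Tt$-representations are determined by their central characters), a step the paper simply asserts.
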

\begin{proof}
    By Theorem \ref{Thm:CmptBranch} there exists $\sigma_{0}^{\prime} \in\Irr_{\epsilon}(\Tt_{0})$ such that
    \begin{equation}\label{Lem:TRFixedConstitEq1}
        \sigma|_{\Tt_{0}}\cong \bigoplus_{y\in Y/\Yt} \varphi_{y}\otimes \sigma_{0}^{\prime}.
    \end{equation}
    Let $\chi_{0}^{\prime}$ be the central character of $\sigma_{0}^{\prime}$. Then $T_{R}$ acts on $\varphi_{y}\otimes \sigma_{0}^{\prime}$ by $\varphi_{y}\cdot \chi_{0}^{\prime}|_{T_{R}}$. By construction each $\varphi_{y}$ is trivial on $\pr(Z(\Tt))\cap T_{R}$, and $\chi_{0}^{\prime}|_{T_{R}}$ is also trivial on $Z(\Tt)\cap T_{R}$ since $\chi$ is trivial on $Z(\Tt)\cap T_{R}$. So, by Lemma \ref{Lem:TR/ZTDual}, there exists a unique $y_{0}\in Y/\Yt$ such that $T_{R}$ acts trivially on $\varphi_{y_{0}}\otimes \sigma_{0}^{\prime}$. Thus $\sigma_{0}:=\varphi_{y_{0}}\otimes \sigma_{0}^{\prime}$ is the unique pseudo-spherical constituent, proving Item \eqref{Prop:It1:TRFixedConstit}.

    Now we prove \eqref{Prop:It2:TRFixedConstit}. By assumption $\,^{w}\sigma\cong \sigma$. As we just proved, $\sigma_{0}\subset \sigma$ is the unique pseudo-spherical $\Tt_{0}$-constituent. Since we are assuming \ref{Hyp_eta_int}, Proposition \ref{Prop:TRSplits} implies $T_{R}$ is $W$-invariant. So $\,^{w}\sigma_{0}\subset\sigma$ is also a pseudo-spherical $\Tt_{0}$-constituent. Thus $\sigma_{0}\cong \,^{w}\sigma_{0}$ by the uniqueness of pseudo-spherical constituents.
\end{proof}

From Proposition \ref{Prop:TRFixedConstit} \eqref{Prop:It1:TRFixedConstit} we get a characterization of the pseudo-spherical $\Tt$-representations in terms of their central character.

\begin{Cor}\label{Cor:PS_Char}
    Let $\sigma\in\Irr_{\epsilon}(\overline{T})$ be a $\Tt$-representation. Then $\sigma$ is pseudo-spherical if and only if the central character of $\sigma$ is pseudo-spherical. In other words, we have the bijection
    \[
    \Irr_{\epsilon}^{ps}(\Tt)\xrightarrow{\sim}\Irr_{\epsilon}^{ps}(Z(\Tt)),\quad \tau\mapsto \chi_\tau,
    \]
    where $\chi_{\tau}$ is the central character of $\tau$. Further, assuming \ref{Hyp_eta_ext} and \ref{Hyp_eta_int} this bijection restricts the bijection
    \[
    \Irr_{\epsilon}^{dps}(\Tt)\xrightarrow{\sim}\Irr_{\epsilon}^{dps}(Z(\Tt)).
    \]
\end{Cor}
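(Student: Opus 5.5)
The plan is to check both directions of the first bijection separately, using Proposition \ref{Prop:TRFixedConstit} and Theorem \ref{Thm:CmptBranch}. Then I will restrict to distinguished characters using Lemma \ref{L:compatibility_of_S_J_and_S_R}.

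\emph{Pseudo-spherical $\sigma$ has pseudo-spherical central character.} Let $\sigma$ be pseudo-spherical and let $v\neq 0$ be fixed by $\s_R(T_R)$. For $z\in Z(\Tt)\cap T_R$ (viewed inside $\s_R(T_R)$), we have $\chi_\sigma(z)v=v$. Hence $\chi_\sigma$ is trivial on $Z(\Tt)\cap T_R$, i.e.\ $\chi_\sigma$ is pseudo-spherical.

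\emph{Pseudo-spherical $\chi_\sigma$ forces $\sigma$ pseudo-spherical.} Suppose $\chi=\chi_\sigma$ is trivial on $Z(\Tt)\cap T_R$. By Theorem \ref{Thm:CmptBranch}, $\sigma|_{\Tt_0}\cong\bigoplus_{y\in Y/\Yt}\varphi_y\otimes\sigma_0'$. Since $T_R\subset Z(\Tt_0)$, the group $T_R$ acts on each summand by the character $\varphi_y\cdot\chi_0'|_{T_R}$. This character is trivial on $\pr(Z(\Tt))\cap T_R$, because each $\varphi_y$ is and $\chi_0'$ agrees with $\chi$ there. By Lemma \ref{Lem:TR/ZTDual}, exactly one $y_0$ makes it trivial. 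This is precisely the argument of Proposition \ref{Prop:TRFixedConstit}\eqref{Prop:It1:TRFixedConstit}, which uses only this hypothesis on $\chi$. So $\sigma$ contains $T_R$-fixed vectors.

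\emph{Bijectivity.} The map $\tau\mapsto\chi_\tau$ is well defined into $\Irr^{ps}_\epsilon(Z(\Tt))$ by the first direction. For injectivity, irreducible genuine $\Tt$-representations are determined by their central characters, via the Stone--von Neumann type statement: $\Tt/Z(\Tt)$ is finite and the commutator pairing on it is nondegenerate (this is Lemma \ref{L:rep_of_2-step_unipotent_group} in the noncompact setting). For surjectivity, take a genuine $\chi$ trivial on $Z(\Tt)\cap T_R$. Choose any irreducible $\sigma$ with central character $\chi$, for instance a constituent of $\Ind_{Z(\Tt)}^{\Tt}\chi$, which is finite-dimensional since $\Tt/Z(\Tt)$ is finite. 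By the second direction, $\sigma$ is pseudo-spherical.

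\emph{Distinguished case.} Assume \ref{Hyp_eta_ext} and \ref{Hyp_eta_int}, so that $\s_J$ exists. An element of $\Irr_\epsilon^{dps}(\Tt)$ is a pseudo-spherical $\sigma$ whose central character is distinguished, i.e.\ trivial on the images of $\s_\eta(T^{sc}_{Q,n})$ and $\gt(T)$. These images generate $\s_J(T^J)$ inside $Z(\Tt)$. So $\chi_\sigma$ is trivial on $Z(\Tt)\cap T_R$ and on $\s_J(T^J)$, which is exactly membership in $\Irr_\epsilon^{dps}(Z(\Tt))$. Hence the first bijection restricts.

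The main obstacle is this last identification: checking that "distinguished" in the sense of Definition \ref{Def:DistGenChar} coincides with "trivial on $\s_J(T^J)$". This requires that $\s_J(T^J)$ is generated by $\s_\eta(T^{sc}_{Q,n})$ and $\gt(T)$ inside $Z(\Tt)$, which follows from the construction in Lemma \ref{L:existence_of_s_J}. Lemma \ref{L:compatibility_of_S_J_and_S_R} guarantees that the two triviality conditions are consistent on $T^J\cap T_R$.
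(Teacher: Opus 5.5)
Your proof is correct and takes essentially the paper's route. The paper obtains the first bijection from the argument of Proposition \ref{Prop:TRFixedConstit}\eqref{Prop:It1:TRFixedConstit}, which, as you observe, uses only that the central character is trivial on $Z(\Tt)\cap T_R$, and it obtains the distinguished case directly from the definitions; you additionally spell out injectivity and surjectivity via determination by central characters, and you identify $\s_J(T^J)$ with the subgroup generated by the images of $\s_\eta(T^{sc}_{Q,n})$ and $\gt(T)$ (your appeal to Lemma \ref{L:compatibility_of_S_J_and_S_R} is not needed for the bijection itself, only for nonemptiness of the distinguished sets).
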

\begin{proof}
    The first bijection follows from the proof of Proposition \ref{Prop:TRFixedConstit} \eqref{Prop:It1:TRFixedConstit}. The second then follows from the definition of distinguished pseudo-spherical representation.
\end{proof}

This corollary also implies the following existence assertion.
\begin{Prop}
    Assume  \ref{Hyp_eta_ext} and \ref{Hyp_eta_int}. Then there exists a distinguished pseudo-spherical character of $Z(\Tt)$, and hence there exists a distinguished pseudo-spherical representation by the above corollary.
\end{Prop}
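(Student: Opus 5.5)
The plan is to build a genuine character of $Z(\Tt)$ that is trivial both on $Z(\Tt)\cap T_R$ (via $\s_R$) and on $Z(\Tt)\cap \s_J(T^J)$, and then apply Corollary \ref{Cor:PS_Char}. By \ref{Hyp_eta_ext} and Proposition \ref{Prop:GG_6.6}, a distinguished character exists, so by Lemma \ref{L:existence_of_s_J} we have the splitting $\s_J:T^J\to Z(\Tt)$. Let
\[
A:=\s_J(T^J)\cdot \s_R\big(T_R\cap \pr(Z(\Tt))\big)\subset Z(\Tt).
\]
This is an abelian subgroup of the abelian group $Z(\Tt)$. It suffices to show $A\cap\mu_n=1$. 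Granting this, the character of $\mu_n A$ given by $\epsilon$ on $\mu_n$ and trivial on $A$ is well defined. It then extends to a character of $Z(\Tt)$, since $Z(\Tt)$ is a locally compact abelian group and $\mu_n A$ is a subgroup; closedness is fine because $\mu_n$ is finite and $\pr(A)$ is closed, or one can argue with the finite quotient by the open subgroup. The extension is genuine, trivial on $Z(\Tt)\cap T_R$ and on $Z(\Tt)\cap\s_J(T^J)$, hence lies in $\Irr^{dps}_\epsilon(Z(\Tt))$. Corollary \ref{Cor:PS_Char} then gives the representation.

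The key step is $A\cap \mu_n=1$. Take $\s_J(a)\s_R(b)=\zeta\in\mu_n$ with $a\in T^J$ and $b\in T_R\cap\pr(Z(\Tt))$. Projecting gives $a=b^{-1}\in T^J\cap T_R$. By Lemma \ref{L:compatibility_of_S_J_and_S_R}, which uses \ref{Hyp_eta_ext} and \ref{Hyp_eta_int}, we get $\s_J(a)=\s_R(a)=\s_R(b)^{-1}$, where the last equality holds because $\s_R$ is a homomorphism (Proposition \ref{Prop:TRSplits}). Hence $\zeta=1$. One also needs $A$ to be a subgroup in which the two pieces multiply consistently; this is automatic since everything sits inside the abelian group $Z(\Tt)$, and both $\s_J$ and $\s_R$ are homomorphisms.

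The main obstacle I expect is the bookkeeping in Lemma \ref{L:compatibility_of_S_J_and_S_R}: its proof checks agreement on $T^{\Yt^{sc}}\cap T_R$, and one must make sure this covers all of $T^J\cap T_R$, including the $nY$-part. On $T^{nY}$, the splitting $\s_J$ agrees with $\s_n$, given by $ny(u)\mapsto \widehat{ny}(u)$, which maps to $\s(y(u^n))$. This coincides with $\s_R$ on $T_R$, because $\s$ restricted to elements $y(r)$ with $r\in R$ is multiplicative by \eqref{Eq:TSectionMult_F}. Products of the two types are then handled by multiplicativity of both splittings. A secondary point is verifying that a pseudo-spherical character is genuinely trivial on $Z(\Tt)\cap T_R$ in the sense used in Corollary \ref{Cor:PS_Char}, which is exactly how $A$ was built.
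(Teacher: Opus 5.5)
Your proof is correct and is essentially the paper's argument. The paper restricts $\epsilon\circ(\Tt_R\cong T_R\times\mu_n\to\mu_n)$ to $\Tt_R\cap Z(\Tt)$, uses Lemma~\ref{L:compatibility_of_S_J_and_S_R} to see that it kills $\s_J(T_R\cap T^J)$, and extends through $(\Tt_R\cap Z(\Tt))/\s_J(T_R\cap T^J)\subset Z(\Tt)/\s_J(T^J)$; this is the same as your check that $\s_J(T^J)\cdot\s_R(T_R\cap\pr(Z(\Tt)))$ meets $\mu_n$ trivially. (One caveat in your side remark on the lemma: an element $ac\in T^J\cap T_R$ with $a\in T^{\Yt^{sc}}$ and $c\in T^{nY}$ need not have $a,c\in T_R$ individually, so "multiplicativity" alone does not suffice. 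You first need $T^J\cap T_R=(T^{\Yt^{sc}}\cap T_R)(T^{nY}\cap T_R)$, which follows by absorbing the $\varpi^{n}$-part of $c$ into $a$ and doing a valuation count in a Smith basis.)
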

\begin{proof}
    Since $T_R$ splits to $\Tt$, we have $\Tt_R\cong T_R\times\mu_n$. Hence there exists a pseudo-spherical character $\chi$ on $\Tt_R$, namely the projection $\Tt_R\to\mu_n$ composed with $\epsilon$. By restricting, we obtain a pseudo-spherical character of $\Tt_R\cap Z(\Tt)$, namely a character of $\Tt_R\cap Z(\Tt)$ trivial on $\s_J(T_R\cap T^J)$ by the compatibility of the two splittings $\s_J$ and $\s_R$. 
    
    
    Now, via the inclusion
    \[
    (\Tt_R\cap Z(\Tt))/\s_J(T_R\cap T^J)\subset Z(\Tt)/\s_J(T^J),
    \]
    one can extend $\chi|_{Z(\Tt)\cap\Tt_R}$ to a genuine character of $Z(\Tt)$ which is trivial on $\s_J(T^J)$. By construction this extension is trivial on both $\s_J(T^J)$ and $\s_R(T_R)\cap Z(\Tt)$. The lemma is proved.
\end{proof}

\begin{Rmk}
    In the proof above, the compatibility of $\s_J$ and $\s_R$ plays a crucial role. Indeed, if they do not agree on the overlap, then any character of $Z(\Tt)$ trivial on both $\s_J(T^J)$ and $\s_R(T_R)$ cannot be genuine.
\end{Rmk}

Next, we consider the restriction problem of distinguished pseudo-spherical representations of $\Tt$ to $\Tt_0$. For this purpose, recall that the (genuine) representations of $\Tt_0$ are determined by their central characters (Lemma \ref{L:rep_of_2-step_unipotent_group}). Namely we have the bijection
\[
    \Irr_{\epsilon}^{ps}(\Tt_0)\xrightarrow{\sim}\Irr_{\epsilon}^{ps}(Z(\Tt_0)),\quad \tau\mapsto \chi_\tau,
\]
where $\chi_{\tau}$ is the central character of $\tau$. Also note that in general we have $Z(\Tt_0)\nsubset Z(\Tt)$ because $T_R\subset Z(\Tt_0)$, but in general $T_R\nsubset Z(\Tt)$ although we always have $Z(\Tt)\cap\Tt_0\subset Z(\Tt_0)$. But by Corollary \ref{C:centers_are_equal_mod_T_R} we know that the (distinguished) pseudo-spherical characters of $Z(\Tt)\cap\Tt_0$ and $Z(\Tt_0)$ coincide. Namely,

\begin{Prop}
    The map
    \[
    \Irr_{\epsilon}^{ps}(\Tt_{0})\cong\Irr_{\epsilon}^{ps}(Z(\Tt_0))\rightarrow\Irr_{\epsilon}^{ps}(Z(\Tt)\cap \Tt_{0})
    \]
    given by
    \[
    \tau\mapsto \chi_{\tau}\mapsto\chi_{\tau}|_{Z(\Tt)\cap \Tt_{0}}
    \]
is a bijection. Further, assume that there exists a distinguished pseudo-spherical character of $Z(\Tt)$, which happens if hypotheses \ref{Hyp_eta_ext} and \ref{Hyp_eta_int} are satisfied. Then this bijection restricts to the bijection
\begin{equation*}
    \Irr_{\epsilon}^{dps}(\Tt_{0})\cong \Irr_{\epsilon}^{dps}(Z(\Tt_0))\rightarrow \Irr_{\epsilon}^{dps}(Z(\Tt)\cap \Tt_{0}).
\end{equation*} 
\end{Prop}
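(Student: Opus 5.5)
The whole argument rests on Corollary \ref{C:centers_are_equal_mod_T_R}, which says that the inclusion $Z(\Tt)\cap\Tt_0\subset Z(\Tt_0)$ induces an isomorphism
\[
(Z(\Tt)\cap\Tt_0)/(Z(\Tt)\cap T_R)\xrightarrow{\sim} Z(\Tt_0)/T_R .
\]
The first arrow $\Irr_\epsilon^{ps}(\Tt_0)\to\Irr_\epsilon^{ps}(Z(\Tt_0))$ is a bijection by Lemma \ref{L:rep_of_2-step_unipotent_group}, applied to $\Tt_0/T_R$. The reason is that a genuine irreducible representation of $\Tt_0$ is $T_R$-fixed exactly when its central character is trivial on $T_R\subset Z(\Tt_0)$ (Proposition \ref{Prop:TRSplits}); in that case it factors through the finite group $\Tt_0/T_R$, whose center is $Z(\Tt_0)/T_R$ by Proposition \ref{P:center_of_M_N}. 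So it suffices to show that restriction of characters $\chi\mapsto\chi|_{Z(\Tt)\cap\Tt_0}$ gives a bijection from $\Irr_\epsilon^{ps}(Z(\Tt_0))$ to $\Irr_\epsilon^{ps}(Z(\Tt)\cap\Tt_0)$.

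For that, note that a pseudo-spherical character of $Z(\Tt_0)$ is the same as a genuine character of $Z(\Tt_0)/T_R$. Here $\mu_n$ injects into the quotient because $\mu_n\cap T_R=1$. Similarly, a pseudo-spherical character of $A:=Z(\Tt)\cap\Tt_0$ is a genuine character trivial on $A\cap T_R=Z(\Tt)\cap T_R$, that is, a genuine character of $A/(Z(\Tt)\cap T_R)$. The corollary identifies these two quotients compatibly with the embeddings of $\mu_n$, so their genuine characters correspond bijectively. Restriction realizes this correspondence because the isomorphism is induced by inclusion. The only point to check is that $\mu_n$ maps to $\mu_n$ under the identification, and this is immediate since $\mu_n\subset A$.

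For the distinguished refinement, assume a distinguished pseudo-spherical character of $Z(\Tt)$ exists. In particular distinguished characters exist, so $\s_J$ is available by Lemma \ref{L:existence_of_s_J}. I would check that the same bijection matches the two distinguishedness conditions:
\begin{enumerate}[label=(\roman*), leftmargin=*]
\item on the $\Tt_0$ side, triviality on the images of $\s_\eta(T_{Q,n}^{sc}(\Ocal))$ and $\gt(T_0)$;
\item on the $A$ side, triviality on $A\cap\s_J(T^J)$.
\end{enumerate}
The images in (i) lie in $\s_J(T^J)\cap\Tt_0\subset A$, because $\s_J$ extends $\s_\eta$ and $\s_n$ and $\s_J(T^J)\subset Z(\Tt)$. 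Conversely, $A\cap\s_J(T^J)$ is generated modulo $T_R$ by these images. This is the image-of-$\varphi$ statement of Proposition \ref{P:splitting_S_J_center}, applied to $T^J\cap T_0$, which is generated by $y(u)$ with $y\in J_n$ and $u\in\Ocal^\times$. A pseudo-spherical character is already trivial on $T_R$, and the splittings $\s_J$ and $\s_R$ agree on the overlap $T^J\cap T_R$ (Lemma \ref{L:compatibility_of_S_J_and_S_R}). So for pseudo-spherical characters, conditions (i) and (ii) coincide, and the bijection restricts to the distinguished subsets.

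The main obstacle is exactly this last matching. One has to see that $\s_J(T^J)\cap\Tt_0$ is generated by $\s_J(y(u))$ with $u\in\Ocal^\times$, modulo elements of $T_R$ on which the two splittings agree. Elements of $T^J$ that lie in $T_0$ but are written using non-unit parameters need to be handled, for instance via a Smith-normal-form basis as in Lemma \ref{L:compatibility_of_S_J_and_S_R}. That is the step I would write out carefully; the rest is formal.
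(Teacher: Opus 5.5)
Your proposal is correct and follows the paper's route. The bijection comes from Corollary~\ref{C:centers_are_equal_mod_T_R}, and your matching of the two distinguishedness conditions is a written-out version of what the paper simply calls immediate.

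The step you flag as the main obstacle is easy. If $u^{d}\in\Ocal^\times$ with $d\geq 1$, then $u\in\Ocal^\times$, so in a Smith basis $T^J\cap T_0$ is generated by $y(u)$ with $y\in J_n$ and $u\in\Ocal^\times$. Hence $(Z(\Tt)\cap\Tt_0)\cap\s_J(T^J)$ equals exactly the group generated by the images of $\s_\eta(T_{Q,n}^{sc}(\Ocal))$ and $\gt(T_0)$. So you do not need the $\s_J$/$\s_R$ compatibility, whose stated proof assumes \ref{Hyp_eta_ext} and \ref{Hyp_eta_int}, which is more than the proposition's hypothesis.
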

\begin{proof}
    That the map $\Irr_{\epsilon}^{ps}(Z(\Tt_0))\rightarrow\Irr_{\epsilon}^{ps}(Z(\Tt)\cap \Tt_{0})$ is a bijection follows from the isomorphism
    \[
    (Z(\Tt)\cap \Tt_0)/(Z(\Tt)\cap T_R)\cong Z(\Tt_0)/T_R
    \]
    of Corollary \ref{C:centers_are_equal_mod_T_R}.

    Once we know the restriction map $\Irr_{\epsilon}^{ps}(Z(\Tt_0))\rightarrow\Irr_{\epsilon}^{ps}(Z(\Tt)\cap \Tt_{0})$ is a bijection, it is immediate that this bijection restricts to the bijection $\Irr_{\epsilon}^{dps}(Z(\Tt_0))\xrightarrow{\sim}\Irr_{\epsilon}^{dps}(Z(\Tt)\cap \Tt_{0})$.
\end{proof}

\begin{Cor}
    Let $\sigma\in\Irr_{\epsilon}^{dps}(\Tt)$. Then the restriction $\sigma|_{\Tt_0}$ contains a unique distinguished pseudo-spherical constituent.
\end{Cor}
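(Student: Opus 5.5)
The plan is to combine Proposition \ref{Prop:TRFixedConstit} \eqref{Prop:It1:TRFixedConstit} with the preceding proposition on restricting central characters to $Z(\Tt)\cap\Tt_0$. Let $\sigma\in\Irr_{\epsilon}^{dps}(\Tt)$ with central character $\chi$. Since $\sigma$ is in particular pseudo-spherical, Proposition \ref{Prop:TRFixedConstit} gives a unique pseudo-spherical constituent $\sigma_0$ of $\sigma|_{\Tt_0}$. Any distinguished pseudo-spherical constituent is in particular pseudo-spherical, so uniqueness is automatic. It remains to show existence: $\sigma_0$ must be distinguished, i.e.\ its central character $\chi_{\sigma_0}$ must lie in $\Irr_\epsilon^{dps}(Z(\Tt_0))$.

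To show this, I would first observe that $Z(\Tt)\cap\Tt_0$ is central in $\Tt$. Hence it acts on all of $\sigma$ by $\chi$, and in particular on $\sigma_0$. Therefore
\[
\chi_{\sigma_0}|_{Z(\Tt)\cap\Tt_0}=\chi|_{Z(\Tt)\cap\Tt_0}.
\]
Since $\chi$ is distinguished pseudo-spherical on $Z(\Tt)$, its restriction is trivial on $(Z(\Tt)\cap\Tt_0)\cap T_R$ and on $(Z(\Tt)\cap\Tt_0)\cap\s_J(T^J)$, so it lies in $\Irr^{dps}_\epsilon(Z(\Tt)\cap\Tt_0)$. By the preceding proposition, restriction gives a bijection $\Irr_{\epsilon}^{dps}(Z(\Tt_0))\to\Irr_{\epsilon}^{dps}(Z(\Tt)\cap\Tt_0)$ sitting inside the bijection on pseudo-spherical characters. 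Also, $\chi_{\sigma_0}$ is a pseudo-spherical character of $Z(\Tt_0)$ whose restriction is distinguished, so $\chi_{\sigma_0}$ must be the unique preimage of that restriction, and hence it is distinguished. Thus $\sigma_0$ is distinguished pseudo-spherical.

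The main point needing care is matching the two notions of ``distinguished'' for $Z(\Tt_0)$. One notion is Definition \ref{Def:DistGenChar}: triviality on the images of $\s_\eta(T^{sc}_{Q,n}(\Ocal))$ and $\gt(T_0)$. The other is the ad hoc one: triviality on $A\cap\s_J(T^J)$. I would reconcile them using Proposition \ref{P:splitting_S_J_center}. That proposition identifies the image of $\s_J$ on $(J_n/nY)\otimes(\Ocal^\times/R)$, modulo $T_R$, with the group generated by those two images. Lemma \ref{L:compatibility_of_S_J_and_S_R} ensures that this is consistent with triviality on $T_R$. This is where hypotheses \ref{Hyp_eta_ext} and \ref{Hyp_eta_int} are used, and they are implicit in the existence of $\Irr^{dps}_\epsilon(\Tt)$ via $\s_J$.
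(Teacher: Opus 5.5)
Your proposal is correct and is essentially the paper's argument. You take the unique pseudo-spherical constituent $\sigma_0$ from Proposition \ref{Prop:TRFixedConstit}, note that its central character agrees with $\chi$ on $Z(\Tt)\cap\Tt_0$, and use the preceding restriction bijection (injective on pseudo-spherical characters, surjective onto distinguished ones) to force $\chi_{\sigma_0}$ to be distinguished. Your added reconciliation of the two notions of ``distinguished'' makes explicit a point the paper passes over, but the existence of $\sigma$ does not give you \ref{Hyp_eta_ext} and \ref{Hyp_eta_int} themselves. What it gives you are the consequences you actually need: the splitting $\s_J$, and its agreement with $\s_R$ on $T^J\cap T_R$, since a genuine character trivial on both images forces the two splittings to coincide there.
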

\begin{proof}
    Since $\sigma$ is distinguished pseudo-spherical, its central character $\chi_\sigma$ is distinguished pseudo-spherical. Hence its restriction to $Z(\Tt)\cap\Tt_0$ is distinguished pseudo-spherical, which uniquely extends to a distinguished pseudo-spherical character of $Z(\Tt_0)$ by the above proposition. Hence the unique pseudo-spherical constituent $\sigma_0$ of Proposition \ref{Prop:TRFixedConstit} is distinguished.
\end{proof}


\section{Hecke algebras}\label{S:Hecke_Alg}


In this section we give an explicit description of the Hecke algebra of a genuine irreducible $\Tt_{0}$-representation.

Let $\tau\in \Irr_{\epsilon}(\Tt_{0})$ and let $\tau^{\vee}$ be its contragredient. Let $\Hcal=\Hcal(\Tt,\tau^{\vee})$ be the Hecke algebra attached to $\tau$. It consists of compactly supported functions $f:\Tt\rightarrow \mathrm{End}(\tau)$ such that for all $\gamma_{j}\in \Tt_{0}$ and $t\in \Tt$ we have
\begin{equation*}
    f(\gamma_{1}t\gamma_{2}) = \tau(\gamma_{1})f(t)\tau(\gamma_{2}).
\end{equation*}
Convolution defines a multiplication on $\Hcal$. Specifically, we fix a Haar measure on $\Tt$ such that the measure of $\Tt_{0}$ is $1$. Then given $f,g\in\Hcal$ we define the convolution of $f$ with $g$ to be 
\begin{equation*}
    f*g(t):=\int_{\Tt}f(u)g(u^{-1}t)du.
\end{equation*}

\begin{Lem}\label{Lem:Hecke_Supp}
    Let $y\in Y$. Let $\Hcal_{y}\subset \Hcal$ be the subspace of functions with support contained in the set $\ybb(\varpi)\Tt_{0}$, where $\ybb(\varpi)\in \Tt$ is any lift of $y(\varpi)\in T$. Then 
    \[
    \dim\Hcal_{y}=\begin{cases}1&\text{if $y\in \Yt$};\\0&\text{if $y\notin \Yt$}.\end{cases}
    \]
\end{Lem}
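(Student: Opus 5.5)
Since $\Tt$ normalizes $\Tt_0$ and $\ybb(\varpi)\Tt_0=\Tt_0\ybb(\varpi)$, a function $f\in\Hcal_y$ is determined by its value $A:=f(\ybb(\varpi))\in\End(\tau)$. The plan is to characterize exactly which $A$ can occur. For $\gamma\in\Tt_0$ we have $\gamma\,\ybb(\varpi)=\ybb(\varpi)\,\gamma\,[\gamma^{-1},\ybb(\varpi)^{-1}]$, so the commutator is a central element $c(\gamma)\in\mu_n$. The two-sided equivariance $f(\gamma_1 t\gamma_2)=\tau(\gamma_1)f(t)\tau(\gamma_2)$ then reduces, at $t=\ybb(\varpi)$, to the single condition
\[
\tau(\gamma)A=\epsilon(c(\gamma))\,A\,\tau(\gamma)\qquad\text{for all }\gamma\in\Tt_0 .
\]
Conversely, any $A$ satisfying this condition defines a well-defined element of $\Hcal_y$: the only ambiguity comes from $\mu_n$, which acts by $\epsilon$ on both sides. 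Writing $\varphi_y(\pr(\gamma))$ for the character $\gamma\mapsto\epsilon([\gamma,\ybb(\varpi)])$ (the $\varphi_y$ of Lemma \ref{Lem:TR/ZTDual}, up to inversion), the condition says exactly that $A\in\Hom_{\Tt_0}(\tau,\varphi_y^{\pm1}\otimes\tau)$. One must fix the sign convention carefully, but the argument is symmetric since $\varphi_{-y}=\varphi_y^{-1}$ and $Y_{Q,n}$ is a group. Hence $\dim\Hcal_y=\dim\Hom_{\Tt_0}(\tau,\varphi_{\pm y}\otimes\tau)$.

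Both $\tau$ and $\varphi_{\pm y}\otimes\tau$ are irreducible genuine $\Tt_0$-representations. By Schur's lemma this Hom space has dimension $1$ if they are isomorphic and $0$ otherwise. By Lemma \ref{L:rep_of_2-step_unipotent_group}, applied to the finite quotient through which $\tau$ factors (compact open kernel; alternatively use the central-character criterion directly), irreducible genuine representations of $\Tt_0$ are determined by their central characters. So the question is whether $\varphi_y$ is trivial on $Z(\Tt_0)$.

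For the case $y\in\Yt$: the element $\ybb(\varpi)$ lies in $Z(\Tt)$ by \eqref{Comm_Id} and the definition of $\Yt$, so $\varphi_y$ is identically trivial and the dimension is $1$. For the case $y\notin\Yt$: Lemma \ref{Lem:TR/ZTDual} shows that $\varphi_y|_{T_R}$ is a nontrivial character. Because $T_R\subset Z(\Tt_0)$ (Proposition \ref{Prop:TRSplits}), $\varphi_y\otimes\tau$ has central character differing from that of $\tau$ on $T_R$. Therefore the two representations are not isomorphic, and $\dim\Hcal_y=0$. This is the same argument as in the first paragraph of the proof of Theorem \ref{Thm:CmptBranch}, which can be cited directly.

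The main obstacle is the first step: verifying that the single equivariance condition at $\ybb(\varpi)$ is both necessary and sufficient for a well-defined function on the double coset. This requires checking consistency when $\gamma_1\ybb(\varpi)\gamma_2=\gamma_1'\ybb(\varpi)\gamma_2'$, which reduces to the identity above with $\gamma=\gamma_1'^{-1}\gamma_1$. The remaining steps are formal consequences of earlier results.
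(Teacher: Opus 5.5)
Your proposal is correct and follows the paper's proof. The paper cites Bushnell--Kutzko (4.1.1) to identify $\Hcal_y$ with $\Hom_{\Tt_0}(\varphi_y\otimes\tau,\tau)$ and then applies Schur's lemma together with Lemma \ref{Lem:TR/ZTDual}, exactly as you do; the only difference is that you verify that identification by hand using normality of $\Tt_0$. Your sign ambiguity also resolves itself: because commutators are central and bimultiplicative, $[\gamma^{-1},\ybb(\varpi)^{-1}]=[\gamma,\ybb(\varpi)]$, so your condition says precisely that $A\in\Hom_{\Tt_0}(\varphi_y\otimes\tau,\tau)$.
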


\begin{proof}
    From \cite[(4.1.1) Proposition]{Bushnell-Kutzko-Book} we know there is a canonical isomorphism of vector spaces between $\Hcal_{y}$ and \[\mathrm{Hom}_{\Tt_{0}}(\,^{y(\varpi)}\tau,\tau)\cong \mathrm{Hom}_{\Tt_{0}}(\varphi_{y}\otimes\tau,\tau).\] Since $\tau$ is irreducible, this space has dimension at most one by Schur's Lemma; by Lemma \ref{Lem:TR/ZTDual} the dimension is one if and only if $y\in \Yt$.
\end{proof}

Next we will identify a basis of $\Hcal$. Recall by Proposition \ref{Prop:Tt0Types} that $\tau$ can be realized inside of a $\Tt$-representation with some central character $\chi$. (The choice of $\chi$ is not unique but we fix one.) For $y\in \Yt$ define $f_{y,\chi}\in \Hcal_{y}$ to be the function whose support is $\Tt_0\,\ybb(\varpi)\,\Tt_0$ such that
\[
f_{y,\chi}(\tbb_1\,\ybb(\varpi)\,\tbb_2)=\tau(\tbb_1)\,\chi(\ybb(\varpi))\,\tau(\tbb_2),\quad \tbb_1, \tbb_2\in\Tt_0,
\]
where $\chi(\ybb(\varpi))$ is viewed as a scalar in $\End(\tau)$.

\begin{Lem}
    The function $f_{y,\chi}$ above is well-defined.
\end{Lem}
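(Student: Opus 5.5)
We must show that the formula $f_{y,\chi}(\tbb_1\ybb(\varpi)\tbb_2)=\tau(\tbb_1)\chi(\ybb(\varpi))\tau(\tbb_2)$ does not depend on how an element of the double coset $\Tt_0\,\ybb(\varpi)\,\Tt_0$ is written. It also must not depend on the choice of lift $\ybb(\varpi)$ within the stated conventions. Since $y\in\Yt$, every lift $\ybb(\varpi)$ lies in $Z(\Tt)$ by \eqref{Comm_Id}. So $\chi(\ybb(\varpi))$ makes sense as the central character value, and the double coset collapses to the single coset $\ybb(\varpi)\Tt_0=\Tt_0\ybb(\varpi)$. Changing the lift by $\zeta\in\mu_n$ multiplies both the group element and $\chi(\ybb(\varpi))$ by $\epsilon(\zeta)$, consistent with genuineness. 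We therefore fix one lift.

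The key step is to suppose $\tbb_1\ybb(\varpi)\tbb_2=\tbb_1'\ybb(\varpi)\tbb_2'$ with $\tbb_i,\tbb_i'\in\Tt_0$. Using centrality of $\ybb(\varpi)$, this gives $\tbb_1\tbb_2=\tbb_1'\tbb_2'$. The required identity $\tau(\tbb_1)\chi(\ybb(\varpi))\tau(\tbb_2)=\tau(\tbb_1')\chi(\ybb(\varpi))\tau(\tbb_2')$ then reduces, since $\chi(\ybb(\varpi))$ is a scalar, to $\tau(\tbb_1\tbb_2)=\tau(\tbb_1'\tbb_2')$. This holds because $\tau$ is a homomorphism.

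Finally, we verify that the resulting function satisfies $f(\gamma_1 t\gamma_2)=\tau(\gamma_1)f(t)\tau(\gamma_2)$. This is immediate from the definition, again using that $\ybb(\varpi)$ commutes with $\Tt_0$. The function is compactly supported because $\Tt_0$ is compact and open. There is no real obstacle here. The only point needing care is centrality of $\ybb(\varpi)$ in $\Tt$, which comes from $B_Q(y,-)\in n\Z$ for $y\in\Yt$ via \eqref{Comm_Id}. The data used is also consistent with Lemma \ref{Lem:Hecke_Supp}: the realization of $\tau$ inside a $\Tt$-representation with central character $\chi$ (Proposition \ref{Prop:Tt0Types}) is not needed for well-definedness itself. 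That realization only ensures the choice of $\chi$ is compatible with $\tau$ on $Z(\Tt)\cap\Tt_0$, which makes $f_{y,\chi}$ the natural normalized basis element of $\Hcal_y$.
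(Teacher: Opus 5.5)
Your argument is correct and is essentially the paper's. Both use that $\ybb(\varpi)\in Z(\Tt)$ for $y\in\Yt$ to reduce the equality $\tbb_1\ybb(\varpi)\tbb_2=\tbb_1'\ybb(\varpi)\tbb_2'$ to $\tbb_1\tbb_2=\tbb_1'\tbb_2'$ (the paper phrases this as $\gamma_1\gamma_2=1$ whenever $t=\gamma_1t\gamma_2$), and then use that $\chi(\ybb(\varpi))$ is a scalar; your extra checks on the $\mu_n$-ambiguity of the lift and on $\Tt_0$-bi-equivariance are valid additions the paper leaves implicit.
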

\begin{proof}
Note that for each $t\in Z(\Tt)$, the function $f$ on $\Tt_0\; t\;\Tt_0$ defined by
\[
f(\tbb_1\,t\,\tbb_2)=\tau(\tbb_1)\,\chi(t)\,\tau(\tbb_2),\quad \tbb_1, \tbb_2\in\Tt_0,
\]
is well-defined. Indeed, if $t= \gamma_{1} t \gamma_{2}$ for some $\gamma_{j}\in \Tt_{0}$ then we must have $\gamma_{1}\gamma_{2}=1$. Since $\chi(t)$ is central in $\mathrm{End}(\tau)$, we have $\chi(t) = \tau(\gamma_{1})\chi(t)\tau(\gamma_{2})$ as elements of $\mathrm{End}(\tau)$. Since $\ybb(\varpi)\in Z(\Tt)$, $f_{y,\chi}$ is well-defined.
\end{proof}

\begin{Lem}
    The set $\{f_{y,\chi}|y\in \Yt\}$ is a basis for $\Hcal$.
\end{Lem}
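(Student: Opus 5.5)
The plan is to combine the support decomposition of $\Hcal$ with Lemma \ref{Lem:Hecke_Supp}. First, since $T$ is abelian, the double cosets $\Tt_0\backslash\Tt/\Tt_0$ are simply the cosets $\Tt/\Tt_0$. These are indexed by $T/T_0\cong Y$ via $y\mapsto \ybb(\varpi)\Tt_0$, because $T = Y\otimes F^{\times}$ and $F^\times=\varpi^{\Z}\times\Ocal^\times$. Any $f\in\Hcal$ is compactly supported and bi-$\Tt_0$-equivariant, so its support is a finite union of cosets $\ybb(\varpi)\Tt_0$. Restricting $f$ to each coset therefore gives a direct sum decomposition
\[
\Hcal=\bigoplus_{y\in Y}\Hcal_y .
\]
The sum is direct because the cosets are disjoint, and restriction to a coset preserves the equivariance condition since each coset is itself a $\Tt_0$-double coset.

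Second, Lemma \ref{Lem:Hecke_Supp} gives $\Hcal_y=0$ for $y\notin\Yt$ and $\dim\Hcal_y=1$ for $y\in\Yt$. This reduces the claim to showing that $f_{y,\chi}$ is a nonzero element of $\Hcal_y$ for each $y\in\Yt$. It lies in $\Hcal_y$ by construction, and it is well defined by the preceding lemma; here we use that $y\in\Yt$ implies $\ybb(\varpi)\in Z(\Tt)$, which is what lets $\chi(\ybb(\varpi))$ make sense as a scalar. It is nonzero because its value at $\ybb(\varpi)$ is $\chi(\ybb(\varpi))\cdot\mathrm{id}$, and $\chi(\ybb(\varpi))$ is a nonzero scalar since $\chi$ is a character.

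Third, since each $f_{y,\chi}$ spans the one-dimensional space $\Hcal_y$ and the $\Hcal_y$ form a direct sum, the family $\{f_{y,\chi}\}_{y\in\Yt}$ is linearly independent and spans $\Hcal$. There is no substantial obstacle. The only point needing care is the identification $\Tt/\Tt_0\leftrightarrow Y$: distinct $y$ must give distinct cosets (clear, since $y(\varpi)\in T_0$ forces $y=0$), and the coset $\ybb(\varpi)\Tt_0$ must not depend on the choice of lift (clear, since $\mu_n\subset\Tt_0$).
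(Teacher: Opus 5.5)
Your proposal is correct and follows the same route as the paper. Both arguments decompose $\Hcal$ by support over the cosets $\ybb(\varpi)\Tt_0$, indexed by $Y\cong\Tt/\Tt_0$, and then apply Lemma \ref{Lem:Hecke_Supp}; you also spell out the direct-sum decomposition $\Hcal=\bigoplus_y\Hcal_y$ and the nonvanishing of each $f_{y,\chi}$, which the paper leaves implicit.
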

\begin{proof}
    As the functions are supported on distinct cosets, we see that these functions are linearly independent. By Lemma \ref{Lem:Hecke_Supp} and the group isomorphism $Y\cong \Tt/\Tt_{0}$, we see that they span $\Hcal$.
\end{proof}

Finally, let $\C[\Yt]$ be the group algebra over $\C$ generated by $\Yt$. Then we have

\begin{Thm}\label{Thm:Hecke_Alg}
    Let $y,z\in\Yt$. Then
    \begin{equation*}
        f_{y,\chi}*f_{z,\chi} = f_{y+z,\chi}.
    \end{equation*}
    Furthermore, the map 
    \[
    \C[\Yt]\rightarrow \Hcal,\quad e^y\mapsto f_{y,\chi}
    \]
    is an isomorphism of $\C$-algebras, where $e^{y}$ is the basis element indexed by $y$. Note that this isomorphism depends on the choice of central character $\chi$.
\end{Thm}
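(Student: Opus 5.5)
The plan is to compute the convolution directly, exploiting the fact that each $f_{y,\chi}$ is supported on the single coset $\ybb(\varpi)\Tt_0$. This coset is also a double coset, because $\ybb(\varpi)\in Z(\Tt)$ for $y\in\Yt$. Fix $y,z\in\Yt$ and lifts $\ybb(\varpi),\zbb(\varpi)\in Z(\Tt)$.

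For $t\in\Tt$ we have
\[
f_{y,\chi}*f_{z,\chi}(t)=\int_{\Tt}f_{y,\chi}(u)f_{z,\chi}(u^{-1}t)\,du .
\]
The integrand vanishes unless $u\in\ybb(\varpi)\Tt_0$ and $u^{-1}t\in\zbb(\varpi)\Tt_0$. Hence the convolution is supported on $\ybb(\varpi)\zbb(\varpi)\Tt_0$. Since $(y+z)(\varpi)=y(\varpi)z(\varpi)$ in $T$, this set equals $\overline{(y+z)}(\varpi)\Tt_0$, so the convolution lies in $\Hcal_{y+z}$.

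To evaluate it at $t=\ybb(\varpi)\zbb(\varpi)$, write $u=\ybb(\varpi)\gamma$ with $\gamma\in\Tt_0$. The measure of $\Tt_0$ is $1$, so the integral becomes
\[
\int_{\Tt_0}\chi(\ybb(\varpi))\tau(\gamma)\cdot\chi(\zbb(\varpi))\tau(\gamma^{-1})\,d\gamma .
\]
Here I use centrality of $\ybb(\varpi)$ to write $u^{-1}t=\gamma^{-1}\zbb(\varpi)=\zbb(\varpi)\gamma^{-1}$. The scalars are central in $\End(\tau)$, so each integrand equals $\chi(\ybb(\varpi)\zbb(\varpi))$. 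The integral is therefore $\chi(\ybb(\varpi)\zbb(\varpi))$, which is exactly $f_{y+z,\chi}(\ybb(\varpi)\zbb(\varpi))$ by the well-definedness lemma applied to $t=\ybb(\varpi)\zbb(\varpi)\in Z(\Tt)$. Both functions lie in $\Hcal_{y+z}$, which is one-dimensional by Lemma \ref{Lem:Hecke_Supp}, and they agree at a point where $f_{y+z,\chi}$ is nonzero. Hence they are equal.

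The main care needed is that $f_{y+z,\chi}$ is defined using a possibly different lift of $(y+z)(\varpi)$. It must not depend on that lift, and it must agree with the value computed from the lift $\ybb(\varpi)\zbb(\varpi)$. Genuineness settles this: two lifts differ by some $\zeta\in\mu_n$, and $\chi$ is $\epsilon$-genuine while $\tau(\zeta)=\epsilon(\zeta)$. So $f_{y+z,\chi}$ depends only on the coset and on $\chi$, and the same check shows each $f_{y,\chi}$ is independent of the chosen lift. I expect this bookkeeping to be the only real obstacle. One could also take $\tbb_1=\zeta$ in the defining formula to see the independence directly.

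Finally, define the linear map $\C[\Yt]\to\Hcal$ by $e^y\mapsto f_{y,\chi}$. The preceding lemma says $\{f_{y,\chi}\}$ is a basis of $\Hcal$, so this map is bijective. The convolution identity shows it is multiplicative. It is unital because $f_{0,\chi}$ is supported on $\Tt_0$ with $f_{0,\chi}(\gamma)=\tau(\gamma)$, which is the identity of $\Hcal$ (taking the lift of $0(\varpi)$ to be $1$). Therefore the map is an isomorphism of $\C$-algebras.
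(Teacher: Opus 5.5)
Your proposal is correct and follows essentially the same route as the paper. Both arguments confine the support of $f_{y,\chi}*f_{z,\chi}$ to $\ybb(\varpi)\zbb(\varpi)\Tt_0$, evaluate the convolution at $\ybb(\varpi)\zbb(\varpi)$ by integrating over the single coset $\ybb(\varpi)\Tt_0$ to get $\chi(\ybb(\varpi)\zbb(\varpi))$, and then deduce the algebra isomorphism from the basis lemma. Your remarks on independence of the lift and on $f_{0,\chi}$ being the unit simply make explicit what the paper leaves implicit.
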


\begin{proof}
    By general properties of convolution, the support of $f_{y}*f_{z}$ is contained in the set $\Tt_{0}\ybb(\varpi)\Tt_{0}\zbb(\varpi)\Tt_{0} = \ybb(\varpi)\zbb(\varpi)\Tt_{0}$, where the equality follows because $\Tt_{0}$ is normal in $\Tt$.

    Since $\ybb(\varpi)\zbb(\varpi)$ is a lift of $(y+z)(\varpi)$, it suffices to show that 
    \begin{equation*}
        f_{y}*f_{z}(\ybb(\varpi)\zbb(\varpi)) = \chi(\ybb(\varpi)\zbb(\varpi)).
    \end{equation*}
    Let $t = \ybb(\varpi)\zbb(\varpi)$. Then we have
    \begin{align*}
        f_{y}*f_{z}(t) =& \int_{\Tt}f_{y}(u)f_{z}(u^{-1}t)du\\
        =& \int_{\ybb(\varpi)\Tt_{0}}f_{y}(u)f_{z}(u^{-1}t)du\\
        =& f_{y}(\ybb(\varpi))f_{z}(\ybb(\varpi)^{-1}t)\\
        =& \chi(\ybb(\varpi))\chi(\ybb(\varpi)^{-1}t)\\
        =& \chi(t).
    \end{align*}
    Now we can check directly that the specified map is an isomorphism, noting $e^{y+z}=e^y\cdot e^z$.
\end{proof}

We emphasize that this isomorphism is non-canonical; it depends on a choice of a genuine central character $\chi$ so that the associated representation $\sigma_{\chi}$ of $\Tt$ contains $\tau$. But the choices for $\chi$ are controlled by Lemma \ref{Lem:FactorZT0}.


\appendix
\section{Skew-symmetric forms over local rings}\label{Appendix}


Let $A$ be a commutative ring with unity and $M$ a projective module over $A$. Let
\[
b:M\times M\longrightarrow A
\]
be a bilinear map such that
\[
b(m, m')=-b(m', m).
\]
We call such $b$ a skew-symmetric form on $M$. We have an $A$-module homomorphism
\[
M\longrightarrow \Hom_A(M, A),\quad m\mapsto b(-, m).
\]
We say $b$ is non-singular if this homomorphism is an isomorphism. If $A$ is a field of odd characteristic, then a non-singular skew-symmetric form is called a symplectic form.

Now, assume $A$ is a local ring, so necessarily $M$ is free. Assume $\rank(M)=n$, so that
\[
M=Ae_1\oplus\cdots\oplus Ae_n
\]
by choosing a basis $e_1,\dots,e_n$. Consider the $n\times n$ matrix
\[
M_b:=(b(e_i, e_j)).
\]
It is then elementary to see 
\begin{Lem}\label{Lem:non-singular_form}
With the above notation and assumption, the skew-symmetric form $b$ is non-singular if and only if $\det(b(e_i,e_j))\in A^\times$.
\end{Lem}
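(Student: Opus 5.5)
We must prove Lemma \ref{Lem:non-singular_form}: $b$ is non-singular if and only if $\det(b(e_i,e_j))\in A^\times$. The plan is to express the map $\Phi:M\to\Hom_A(M,A)$, $m\mapsto b(-,m)$, as a matrix with respect to the basis $e_1,\dots,e_n$ of $M$ and the dual basis $e_1^*,\dots,e_n^*$ of $\Hom_A(M,A)$. Here $e_i^*(e_j)=\delta_{ij}$.

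First, since $M$ is free of rank $n$, $\Hom_A(M,A)$ is free of rank $n$ with basis $e_1^*,\dots,e_n^*$: a functional $\lambda$ equals $\sum_i\lambda(e_i)e_i^*$. Second, compute $\Phi(e_j)=b(-,e_j)=\sum_i b(e_i,e_j)e_i^*$. So the matrix of $\Phi$ in these bases is exactly $M_b=(b(e_i,e_j))$, and the choice of convention (transpose or not) is irrelevant since skew-symmetry gives $M_b^{T}=-M_b$ anyway. Third, invoke the standard fact that an endomorphism-type map between free modules of the same finite rank over a commutative ring is an isomorphism if and only if its matrix is invertible, if and only if its determinant is a unit.

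For the last equivalence: if $\det M_b\in A^\times$, the adjugate formula $M_b\cdot\mathrm{adj}(M_b)=\mathrm{adj}(M_b)\cdot M_b=\det(M_b)I$ produces an inverse matrix. Conversely, if $\Phi$ is bijective, its inverse is $A$-linear with some matrix $N$. Then $M_bN=I$, and multiplicativity of the determinant gives $\det(M_b)\det(N)=1$.

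There is no real obstacle. The only care needed is to note that locality of $A$ is used solely to ensure $M$ is free, so that the basis $e_1,\dots,e_n$ and the dual basis exist; the determinant argument itself holds over any commutative ring.
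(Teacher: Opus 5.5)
Your proof is correct and is the standard argument the paper leaves implicit (it gives no proof, only calling the lemma elementary to see): the matrix of $m\mapsto b(-,m)$ with respect to $e_1,\dots,e_n$ and the dual basis is $(b(e_i,e_j))$. A homomorphism between free modules of the same finite rank over a commutative ring is an isomorphism if and only if its determinant is a unit.
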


Let $\mathfrak{m}\subset A$ be the maximal ideal. The skew-symmetric form on $M$ naturally gives rise to a skew-symmetric form
\[
\widebar{b}: M/ \mathfrak{m}M\times M/ \mathfrak{m}M\longrightarrow A/\mathfrak{m}A
\]
on the vector space $M/\mathfrak{m}M$ over the residue field $A/\mathfrak{m}A$. One can then see
\begin{Lem}
The skew-symmetric form $b$ on $M$ is non-singular if and only if the skew-symmetric form $\widebar{b}$ is non-singular.
\end{Lem}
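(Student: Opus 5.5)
The plan is to reduce everything to Lemma \ref{Lem:non-singular_form}, which says that non-singularity is a determinant condition. Fix a basis $e_1,\dots,e_n$ of $M$ over $A$. Then the images $\bar e_1,\dots,\bar e_n$ form a basis of the vector space $M/\mathfrak{m}M$ over the residue field $k=A/\mathfrak{m}$. By construction of $\widebar{b}$, the Gram matrix of $\widebar{b}$ in the basis $\{\bar e_i\}$ is the reduction modulo $\mathfrak{m}$ of the Gram matrix $M_b=(b(e_i,e_j))$. So the first step is simply to record the equality
\[
M_{\widebar{b}}=\overline{M_b}\in M_n(k).
\]

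The second step is to compare determinants. The determinant is a polynomial in the entries, and reduction $A\to k$ is a ring homomorphism. Hence $\det M_{\widebar{b}}$ is the image of $\det M_b$ in $k$.

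The third step uses that $A$ is local: an element $a\in A$ is a unit if and only if $a\notin\mathfrak{m}$, that is, if and only if its image in $k$ is nonzero. Thus $\det M_b\in A^\times$ exactly when $\det M_{\widebar{b}}\neq 0$ in $k$. We then apply Lemma \ref{Lem:non-singular_form} twice: once over $A$, and once over the field $k$, which is itself a local ring with maximal ideal $0$, so the lemma applies verbatim. This gives that $b$ is non-singular if and only if $\widebar{b}$ is non-singular.

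There is no real obstacle here. The only point needing care is that $\widebar{b}$ is well defined, i.e.\ $b(\mathfrak{m}M,M)\subset\mathfrak{m}$, which is immediate from bilinearity. It is also worth noting that for the vector space $M/\mathfrak{m}M$, "non-singular" in the sense of the appendix (the map to the dual is an isomorphism) agrees with "Gram determinant nonzero". If one prefers to avoid Lemma \ref{Lem:non-singular_form}, an alternative is to apply Nakayama's lemma to the map $M\to\Hom_A(M,A)$ between free modules of equal rank: it is an isomorphism iff it is surjective, iff it is surjective mod $\mathfrak{m}$, iff the reduced map is an isomorphism. The determinant route above is shorter, so it is the one I would write.
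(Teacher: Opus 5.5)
Your proposal is correct and follows essentially the same route as the paper, whose proof just cites Lemma \ref{Lem:non-singular_form} together with the fact that $A^\times = A\smallsetminus\mathfrak{m}$; your argument (reduce the Gram matrix, note that the determinant commutes with reduction, then use that a unit is exactly an element with nonzero image in $A/\mathfrak{m}$) is that same argument written out in full. The Nakayama alternative you mention also works but is not needed.
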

\begin{proof}
    This follows from Lemma \ref{Lem:non-singular_form} together with the fact that $A^\times=A\smallsetminus \mathfrak{m}$ is the set of units.
\end{proof}

For each subset $S\subset M$, we define
\[
S^\perp=\{m\in M\st b(m, s)=0\quad\text{for all $s\in S$}\},
\]
which is always a submodule of $M$. Also let $N\subset M$ be a submodule of $M$. We say $N$ is non-singular if the restriction of $b$ on $N$ is non-singular.

\begin{Lem}
Let $(M, b)$ be non-singular, and $N\subset M$ a non-singular submodule. Then
\[
M=N\oplus N^\perp,
\]
and $N^\perp$ is also non-singular.
\end{Lem}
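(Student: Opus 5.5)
We must prove the lemma: for $(M,b)$ non-singular and $N\subset M$ a non-singular submodule, $M=N\oplus N^\perp$ and $N^\perp$ is non-singular. The plan is the standard orthogonal-projection argument, using only that the restriction of $b$ to $N$ induces an isomorphism $N\to\Hom_A(N,A)$.

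\emph{Trivial intersection.} Let $m\in N\cap N^\perp$. Then $b(n,m)=0$ for all $n\in N$, so the functional $b(-,m)|_N$ is zero. Non-singularity of $N$ makes $m\mapsto b(-,m)$ injective on $N$, hence $m=0$.

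\emph{Spanning.} Fix $m\in M$ and consider the functional $b(-,m)|_N\in\Hom_A(N,A)$. Since $N$ is non-singular, there is a unique $n_0\in N$ with $b(n,n_0)=b(n,m)$ for all $n\in N$. Then $b(n,m-n_0)=0$ for all $n\in N$, so $m-n_0\in\{m'\st b(n,m')=0\ \forall n\in N\}$. By skew-symmetry this set equals $N^\perp$ as defined, since $b(m',n)=-b(n,m')$. Therefore $m=n_0+(m-n_0)\in N+N^\perp$, and $M=N\oplus N^\perp$. Note that the non-singularity of $M$ is not needed for this part.

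\emph{Non-singularity of $N^\perp$.} First, $N^\perp$ is a direct summand of the free module $M$ over a local ring, so it is projective and hence free. Choose bases of $N$ and of $N^\perp$. Their union is a basis of $M$, and since $b(N,N^\perp)=0$ the Gram matrix of $b$ is block diagonal, $\mathrm{diag}(M_{b|_N},M_{b|_{N^\perp}})$. By Lemma \ref{Lem:non-singular_form}, $\det M_b\in A^\times$, which forces $\det M_{b|_{N^\perp}}\in A^\times$, so $N^\perp$ is non-singular.

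Alternatively, one can argue directly with functionals. Given $\lambda\in\Hom_A(N^\perp,A)$, extend it by zero on $N$ to a functional on $M$. Non-singularity of $M$ represents this as $b(-,m)$ for some $m\in M$. Decompose $m=n+m'$ with $n\in N$ and $m'\in N^\perp$. Vanishing on $N$ and non-singularity of $N$ force $n=0$, so $\lambda=b(-,m')$ with $m'\in N^\perp$. Injectivity of $N^\perp\to\Hom_A(N^\perp,A)$ follows because $b(-,m')$ then vanishes on both $N$ and $N^\perp$, hence on $M$, so $m'=0$.

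The only point needing care is freeness of $N^\perp$ in the determinant route. That is why the functional argument is the cleaner one to present, since it needs no bases.
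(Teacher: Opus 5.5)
Your proof is correct and follows essentially the same approach as the paper. The paper gives no argument of its own and cites Baeza's Proposition (3.2), which is exactly this orthogonal-projection argument: represent $b(-,m)|_N$ by an element of $N$, then use non-singularity of $M$ to get non-singularity of $N^\perp$. Both of your routes for $N^\perp$ are valid; the determinant route works because $N$ and $N^\perp$ are direct summands of a finite free module over a local ring, hence free.
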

\begin{proof}
    This can be proved as in \cite[Proposition (3.2), p.10]{Baeza}.
\end{proof}

This lemma implies the following proposition which states that an orthogonal decomposition of $M/\mathfrak{m}M$ always lifts to one of $M$.
\begin{Prop}\label{P:reduction_of_A-module}
Let $(M, b)$ be non-singular, and let $\widebar{M}=M/\mathfrak{m}M$. Every orthogonal decomposition
\[
\widebar{M}=\widebar{N}\perp\widebar{N'},
\]
where $\widebar{N}$ is a non-singular subspace of $\widebar{M}$ lifts to an orthogonal decomposition
\[
M=N\perp N',
\]
where $N$ is non-singular with $\widebar{N}=N/\mathfrak{m}N$ and $\widebar{N'}=N'/\mathfrak{m}N'$.

\end{Prop}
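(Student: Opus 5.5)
The plan is to lift the decomposition by first lifting a basis of $\widebar{N}$ and then using the preceding lemma to produce the complement. Write $\pi:M\to\widebar{M}$ for the reduction map. Choose a basis $\bar v_1,\dots,\bar v_r$ of the subspace $\widebar{N}$ over $A/\mathfrak{m}$, and choose arbitrary lifts $v_1,\dots,v_r\in M$. Let $N=Av_1+\cdots+Av_r$. I first show that $N$ is free on $v_1,\dots,v_r$ and is a non-singular submodule. Consider the Gram matrix $(b(v_i,v_j))$. Its reduction modulo $\mathfrak{m}$ is $(\widebar{b}(\bar v_i,\bar v_j))$, which has unit determinant because $\widebar{N}$ is non-singular (Lemma \ref{Lem:non-singular_form} over the field $A/\mathfrak{m}$). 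Hence $\det(b(v_i,v_j))\notin\mathfrak{m}$, so it lies in $A^\times$.

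This determinant condition gives freeness. Suppose $\sum a_iv_i=0$. Pairing with each $v_j$ gives $\sum_i a_i b(v_i,v_j)=0$, and invertibility of the Gram matrix forces all $a_i=0$. So $N$ is free with this basis, and by Lemma \ref{Lem:non-singular_form} the restriction of $b$ to $N$ is non-singular. Moreover, $N/\mathfrak{m}N$ is free of rank $r$ over $A/\mathfrak{m}$. It maps onto $\widebar{N}$, which has dimension $r$, so $\widebar{N}=N/\mathfrak{m}N$ via $\pi$. This map is injective because $N\cap\mathfrak{m}M=\mathfrak{m}N$, which follows from $M=N\oplus N'$ below.

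Next I apply the lemma preceding the proposition to get $M=N\oplus N'$ with $N':=N^\perp$ non-singular. Since $N'$ is a direct summand of the free module $M$ over a local ring, it is free, and $M/\mathfrak{m}M=N/\mathfrak{m}N\oplus N'/\mathfrak{m}N'$. The image $\pi(N')$ is orthogonal to $\pi(N)=\widebar{N}$, so $\pi(N')\subset\widebar{N}^{\perp}$. The decomposition $\widebar{M}=\widebar{N}\perp\widebar{N'}$ with $\widebar{N}$ non-singular forces $\widebar{N'}=\widebar{N}^{\perp}$: the inclusion $\widebar{N'}\subset\widebar{N}^\perp$ is clear, and dimensions agree since $\dim\widebar{N}^\perp=\dim\widebar{M}-\dim\widebar{N}$. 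The same dimension count gives $\pi(N')=N'/\mathfrak{m}N'$, of dimension $\operatorname{rank}M-r$. Therefore $\pi(N')=\widebar{N}^\perp=\widebar{N'}$, which completes the lift.

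The only delicate step is the first one: verifying that the naive lift $N$ is actually free, non-singular, and reduces to $\widebar{N}$. Everything hinges on the Gram determinant being a unit, which is where the locality of $A$ (units are exactly $A\smallsetminus\mathfrak{m}$) enters. Once that is in place, the complement is supplied directly by the previous lemma.
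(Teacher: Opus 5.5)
Your proof is correct and takes the same approach as the paper, which cites Baeza, Corollary (3.4) and remarks that the proposition follows from the preceding lemma; you carry out that deduction in full: lift a basis of $\widebar{N}$, get a unit Gram determinant by locality, apply the lemma with $N'=N^\perp$, and match the reductions by a dimension count. The forward reference to $M=N\oplus N'$ is unnecessary, since injectivity of $N/\mathfrak{m}N\to\widebar{N}$ already follows from its being a surjection between $r$-dimensional spaces.
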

\begin{proof} 
    This is \cite[Corollary (3.4), p.11]{Baeza}.
\end{proof}

Next, let us mention
\begin{Prop}\label{P:Milner}
Let $(V, b(-,-))$ be a nondegenerate skew-symmetric space over a field $k$. Then 
we have an orthogonal sum decomposition
\[
V=\la u_1\ra\oplus\cdots\oplus \la u_k\ra\oplus V_1\oplus\cdots\oplus V_m,
\]
where $\la u_i\ra$ is the 1-dimensional space having a basis $u_i\in V$, and each $V_i$ is the 2-dimensional skew-symmetric space, so it is represented by the matrix
\[
\begin{pmatrix}
    0&-1\\1&0
\end{pmatrix}.
\]
Here, note that if $\operatorname{char}(k)\neq 2$ then there is no 1-dimensional piece.
\end{Prop}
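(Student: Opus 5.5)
We argue by induction on $\dim V$, following the classical treatment of alternating and skew-symmetric forms (as in Milnor--Husemoller). If $V=0$ there is nothing to prove. Otherwise we split according to whether $V$ contains an anisotropic vector, i.e.\ some $v$ with $b(v,v)\neq 0$.

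\emph{Case 1: some $v\in V$ has $b(v,v)\neq 0$.} From $b(v,v)=-b(v,v)$ we get $2b(v,v)=0$, so this case can only occur when $\operatorname{char}(k)=2$. This is exactly why no $1$-dimensional pieces appear in odd characteristic. Put $u_1:=v$. Then $\la u_1\ra$ is a non-singular $1$-dimensional subspace, since its Gram matrix is the unit $(b(v,v))$ (Lemma \ref{Lem:non-singular_form}). The lemma before Proposition \ref{P:reduction_of_A-module}, taken over the field $A=k$, gives
\[
V=\la u_1\ra\oplus\la u_1\ra^\perp
\]
with $\la u_1\ra^\perp$ non-singular. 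We then apply the induction hypothesis to $\la u_1\ra^\perp$.

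\emph{Case 2: $b(v,v)=0$ for all $v$, so $b$ is alternating.} Pick any $e\neq 0$. By nondegeneracy there is $f$ with $b(e,f)\neq 0$, and after rescaling $f$ we may assume $b(f,e)=1$, i.e.\ $b(e,f)=-1$. The vectors $e,f$ are linearly independent, because $b(e,e)=0\neq b(e,f)$. In the basis $(e,f)$ the Gram matrix is
\[
\begin{pmatrix}0&-1\\1&0\end{pmatrix},
\]
using $b(e,e)=b(f,f)=0$. Its determinant is $1$, so $V_1:=\la e,f\ra$ is non-singular. The orthogonal splitting lemma gives $V=V_1\oplus V_1^\perp$ with $V_1^\perp$ non-singular, and we induct on $V_1^\perp$.

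Finally we collect the summands: the $1$-dimensional pieces $\la u_i\ra$ and the hyperbolic planes $V_j$. Their order in the decomposition does not matter, since the sum is orthogonal. We also need that the $1$-dimensional pieces arise only in characteristic $2$. The induction alternates between the two cases, and Case 1 only ever produces $1$-dimensional pieces in characteristic $2$, by the computation above; so in characteristic $\neq 2$ only hyperbolic planes appear.

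The only mildly delicate step is the first one: checking that Case 1 forces $\operatorname{char}(k)=2$, and that the non-alternating situation is handled by peeling off a $1$-dimensional piece rather than a plane. Everything else is the standard Gram–Schmidt-type splitting, supplied by the orthogonal complement lemma.
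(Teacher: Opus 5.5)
Your proof is correct. It supplies an argument where the paper only cites one: the paper declares the case $\operatorname{char}(k)\neq 2$ well known and, for $\operatorname{char}(k)=2$, observes that a skew-symmetric form is then symmetric and refers to Milnor--Husemoller.

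You instead give the standard self-contained induction, uniformly in the characteristic. You split off an anisotropic line when one exists; as you note, this forces $2=0$ in $k$, because $2b(v,v)=0$ with $b(v,v)\neq 0$. Otherwise the form is alternating and you split off a plane with Gram matrix $\begin{pmatrix}0&-1\\1&0\end{pmatrix}$. In both cases you use the appendix's orthogonal-complement lemma specialized to $A=k$. There is no circularity, since that lemma is established independently of this proposition. Over a field it is just the observations $N\cap N^\perp=0$ and $\dim N^\perp\ge \dim V-\dim N$.

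What your route buys is a proof internal to the paper that makes transparent why $1$-dimensional pieces occur only in characteristic $2$. What the citation buys is brevity, and in characteristic $2$ the classical theory behind it gives a sharper dichotomy. An alternating space is a sum of hyperbolic planes, and a non-alternating one even has an orthogonal basis. Your induction yields only a mixed decomposition, but that is exactly what the proposition asserts and all that Theorem~\ref{T:decomposition_of_skew-symmetric-module} uses.
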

\begin{proof}
The case for $\operatorname{char}(k)\neq 2$ is well-known. For the case $\operatorname{char}(k)=2$, see \cite[p.61-62]{Milnor}, noting that in this case a skew-symmetric form is a symmetric form.
\end{proof}

Now, we can prove the main theorem of this appendix as follows.
\begin{Thm}\label{T:decomposition_of_skew-symmetric-module}
Let $M$ be a free module over a local ring $A$ with the maximal ideal $\mathfrak{m}$ equipped with a non-singular skew-symmetric form $b$. Then we have an orthogonal sum decomposition
\[
M=\la u_1\ra\oplus\cdots\oplus\la u_k\ra\;\oplus\;\la e_1, f_1\ra\oplus\cdots\oplus\la e_m, f_m\ra,
\]
where $\la u_i\ra$ is a rank 1 module spanned by some $u_i\in M$ such that $2b(u_i, u_i)=0$, and $\la e_i, f_i\ra$ is a rank 2 module spanned by $\{e_i, f_i\}\subset M$ whose associated matrix is
\[
\begin{pmatrix}
    a_i&-1-m_i\\ 1+m_i&b_i
\end{pmatrix}
\quad\text{with}\quad a_i, b_i, m_i\in \mathfrak{m},
\]
where $2a_i=2b_i=0$.

Further, the rank 1 module $\la u_i\ra$ can appear only when $\operatorname{char}(A)=2$ (namely $2=0$ in $A$).
\end{Thm}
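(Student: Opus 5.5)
The plan is to reduce to the residue field, decompose there using Proposition \ref{P:Milner}, and lift the decomposition back to $M$ using Proposition \ref{P:reduction_of_A-module}. Along the way we track what the lifted Gram matrices look like.

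\textbf{Step 1: reduce to the residue field.} Set $k=A/\mathfrak{m}$ and $\widebar{M}=M/\mathfrak{m}M$. By the second lemma of this appendix, $\widebar{b}$ is non-singular, hence nondegenerate, on the $k$-vector space $\widebar{M}$. Proposition \ref{P:Milner} then gives an orthogonal decomposition
\[
\widebar{M}=\la \bar u_1\ra\oplus\cdots\oplus\la\bar u_k\ra\oplus V_1\oplus\cdots\oplus V_m .
\]
Each summand is non-singular: the $1$-dimensional ones because $\widebar{b}(\bar u_i,\bar u_i)\neq0$ by nondegeneracy, and the $V_j$ because their matrix is the standard one.

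\textbf{Step 2: lift the decomposition.} We proceed by induction on $\rank M$. Peel off one non-singular summand $\widebar{N}$ (either some $\la\bar u_i\ra$ or some $V_j$) and write $\widebar{M}=\widebar{N}\perp\widebar{N}'$, where $\widebar{N}'$ is the sum of the remaining summands. Proposition \ref{P:reduction_of_A-module} lifts this to $M=N\perp N'$ with $N$ non-singular and $N/\mathfrak{m}N=\widebar{N}$. By the lemma before that proposition, $N'=N^\perp$ is also non-singular. As a direct summand of a free module over a local ring, $N'$ is free. Its reduction is $\widebar{N}'$, and $\widebar{N}'$ inherits the remaining orthogonal decomposition, so induction applies to $N'$.

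\textbf{Step 3: identify the Gram matrices.} For a rank $1$ lift $N=Au_i$, skew-symmetry gives $b(u_i,u_i)=-b(u_i,u_i)$, so $2b(u_i,u_i)=0$. For a rank $2$ lift, choose lifts $e_i,f_i$ of the standard basis of $V_i$. Their Gram matrix reduces mod $\mathfrak{m}$ to $\begin{pmatrix}0&-1\\1&0\end{pmatrix}$, so it has the form $\begin{pmatrix}a_i&-1-m_i\\1+m_i&b_i\end{pmatrix}$ with $a_i,b_i,m_i\in\mathfrak{m}$. Skew-symmetry again forces $2a_i=2b_i=0$, and it also makes the two off-diagonal entries negatives of each other. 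The only content here is the consistency of the off-diagonal entries with skew-symmetry, which is automatic.

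\textbf{Step 4: when rank $1$ pieces occur.} Suppose a rank $1$ piece $\la u\ra$ occurs. Non-singularity of $\la u\ra$ (Lemma \ref{Lem:non-singular_form}) means $b(u,u)\in A^\times$. Combined with $2b(u,u)=0$, this forces $2=0$ in $A$, i.e.\ $\operatorname{char}(A)=2$. Equivalently, since $2b(u,u)=0$ with $b(u,u)$ a unit, we get $\bar 2=0$ in $k$, so $\operatorname{char}k=2$; and then $2=0$ in $A$ because $2b(u,u)=0$ with $b(u,u)$ invertible.

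The only step I expect to need care is Step 2: checking that the lifted complement is free and non-singular with the correct reduction, so that the induction goes through. This is handled by the lemmas cited from \cite{Baeza}; everything else is bookkeeping.
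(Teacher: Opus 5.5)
Your proposal is correct and follows the same route as the paper. You decompose $M/\mathfrak{m}M$ via Proposition \ref{P:Milner}, lift the decomposition via Proposition \ref{P:reduction_of_A-module}, and read off $2b(u_i,u_i)=0$, $2a_i=2b_i=0$ and $2=0$ from skew-symmetry together with $b(u_i,u_i)\in A^\times$; your explicit induction (peeling off one non-singular summand at a time, with $N'=N^\perp$ free and non-singular) just spells out the lifting step that the paper compresses into one sentence.
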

\begin{proof}
Decompose $\widebar{M}=M/\mathfrak{m}M$ by using Proposition \ref{P:Milner} and lift the decomposition to $M$ by using Proposition \ref{P:reduction_of_A-module}. If the rank 1 module $\la u_i\ra$ appears, then we must have $b(u_i,u_i)\in A\smallsetminus \mathfrak{m}$. Since $b$ is skew-symmetric, we must have $2b(u_i, u_i)=0$, which also implies $2=0$. Similarly, we have $2a_i=2b_i=0$.
\end{proof}

\bibliographystyle{amsalpha}
\bibliography{Hecke_algebra2_bio}

\end{document}